\DeclareMathOperator{\Q}{\mathbb{Q}}
\DeclareMathOperator{\Z}{\mathbb{Z}}
\DeclareMathOperator{\C}{\mathbb{C}}
\DeclareMathOperator{\A}{\mathbb{A}}
\DeclareMathOperator{\R}{\mathbb{R}}
\DeclareMathOperator{\F}{\mathbb{F}}
\DeclareMathOperator{\D}{\mathcal{D}}
\DeclareMathOperator{\length}{length}
\DeclareMathOperator\End{End}
\DeclareMathOperator\Hom{Hom}
\DeclareMathOperator\Spec{Spec}
\DeclareMathOperator\Spf{Spf}
\newcommand{\Zed}{\mathfrak{Z}}
\newcommand{\X}{\mathbb{X}}
\newcommand{\Y}{\mathbb{Y}}
\newcommand{\V}{\mathbb{V}}
\newcommand{\M}{\mathcal M}
\DeclareMathOperator{\Diff}{Diff}
\DeclareMathOperator{\ord}{ord}
\DeclareMathOperator{\Herm}{Herm}
\DeclareMathOperator{\inv}{inv}
\DeclareMathOperator{\vol}{vol}
\DeclareMathOperator{\Ta}{\mathtt{Ta}}
\DeclareMathOperator{\Lie}{\mathsf{Lie}}
\DeclareMathOperator{\diag}{\mathsf{diag}}
\DeclareMathOperator{\Id}{\mathtt{Id}}
\newcommand{\la}{\langle} 	
\newcommand{\ra}{\rangle} 	
\newcommand{ \lie}[1]{\mathfrak{#1} } 
\newcommand{\isomto}{\overset{\sim}{\longrightarrow}}
\newcommand{ \sm}[1]{ \begin{smallmatrix} #1 \end{smallmatrix} }
\newcommand{ \refThm}[1]{\hyperref[#1]{Theorem \ref{#1}}}
\newcommand{ \refSec}[1]{\hyperref[#1]{Section \ref{#1}}}
\newcommand{ \refProp}[1]{\hyperref[#1]{Proposition \ref{#1}}}
\newcommand{ \refLemma}[1]{\hyperref[#1]{Lemma \ref{#1}}}
\newcommand{\refRemark}[1]{\hyperref[#1]{Remark \ref{#1}}}
\newcommand{\refDef}[1]{\hyperref[#1]{Definition \ref{#1}}}
\newcommand{\refCor}[1]{\hyperref[#1]{Corollary \ref{#1}}}
\theoremstyle{plain}
\newtheorem{theorem}{Theorem}[section]
\newtheorem*{theorem*}{Theorem}
\newtheorem*{maintheorem*}{Main Theorem}
\newtheorem{lemma}[theorem]{Lemma}
\newtheorem{proposition}[theorem]{Proposition}
\newtheorem{definition}[theorem]{Definition}
\newtheorem{corollary}[theorem]{Corollary}
\newtheorem*{claim*}{Claim}
\numberwithin{equation}{section}
\theoremstyle{remark}
\newtheorem{remark}[theorem]{Remark}
\newcommand{\Nilp}{\mathbf{Nilp}}
\renewenvironment{leftbar}[1][\hsize]
{%
    \MakeFramed{\hsize#1\advance\hsize-\width\FrameRestore}%
}
{\endMakeFramed}
\title{Improper Intersections of Kudla-Rapoport divisors and Eisenstein series}
\author{Siddarth Sankaran}
\begin{document}
\begin{abstract}
We consider a certain family of Kudla-Rapoport cycles on an integral model of a Shimura variety attached to a unitary group of signature (1,1), and prove that the arithmetic degrees of these cycles can be identified with the Fourier coefficients of the central derivative of an Eisenstein series of genus 2. The integral model in question parametrizes abelian surfaces equipped with a non-principal polarization and an action of an imaginary quadratic number ring, and in this setting the cycles are degenerate: they may contain components of positive dimension. This result  can  be viewed as confirmation, in the degenerate setting and for dimension 2, of conjectures of Kudla and Kudla-Rapoport that predict relations between the intersection numbers of special cycles and the Fourier coefficients of automorphic forms. 

\end{abstract}
\maketitle
\section{Introduction}
In their article \cite{KRunnglob}, Kudla and Rapoport investigate integral models of Shimura varieties attached to unitary groups of signature $(n-1,1)$. These models are defined as moduli spaces of abelian varieties equipped with an action of the maximal order $o_k$ in a fixed imaginary quadratic field, together with a compatible principal polarization. Kudla and Rapoport go on to construct a family of `special' cycles, and prove that when such a cycle is zero-dimensional and is supported in the fibre of an unramified prime, its  degree can be identified with a Fourier coefficient of the derivative of an incoherent Eisenstein series for $U(n,n)$ at its centre of symmetry. This result is in line with a deep conjectural programme, initiated by Kudla and supported by his collaborators, that aims to establish systematic relations between arithmetic cycles on Shimura varieties and Fourier coefficients of automorphic forms, cf. the survey article \cite{KudlaMSRI}.

In this paper, we study an extension of the problem of Kudla-Rapoport in the case $n=2$, where we allow the polarizations to be non-principal in a controlled way. Though the cycles in this setting might not be  zero-dimensional, our main result asserts that their  degrees, suitably defined, are again identified with the Fourier coefficients of the  central derivative of a (non-standard) Eisenstein series for $U(2,2)$. 

We now give a more precise account of this result. Let $k$ be an imaginary quadratic field, with ring of integers $o_k$, and fix a squarefree integer $d \in \Z_{>0}$, all of whose factors are inert primes in $k$. We define $\mathrm M^d_{(1,1)}$ to be the moduli stack of triples $\underline A = (A, i, \lambda)$, where $A$ is an abelian surface equipped with an action
\[ i \colon o_k \ \to \ \End(A) \]
that satisfies the \emph{signature $(1,1)$ condition}, cf.\ \refDef{Md(1,1)Def} below, and $\lambda$ is a polarization such that
\begin{compactenum}[(i)]
\item the corresponding Rosati involution induces Galois conjugation on the image $i(o_k)$; and 
\item $\ker(\lambda) \subset A[d]$ is contained in the $d$-torsion of $A$ with $|\ker(\lambda)| = d^2$. 
\end{compactenum}
This moduli problem is representable by a Deligne-Mumford stack that is flat over $\Spec(o_k)$. 
 
Next, we let $\mathcal E$ be the Deligne-Mumford stack parametrizing triples $\underline E = (E, i_E, \lambda_E)$ consisting of an elliptic curve $E$ with an $o_k$-action $i_E \colon o_k \to \End(E)$ satisfying the \emph{signature $(1,0)$} condition, and a principal polarization $\lambda_E$ whose Rosati again induces Galois conjugation on the image $i_E(o_k)$. 

Following \cite{KRunnglob}, we define the \emph{Kudla-Rapoport cycles} on the product $\mathcal M := \ \mathcal E \times_{o_k} \mathrm M^d_{(1,1)}$ as follows. Suppose we are given points $\underline E \in \mathcal E(S) $ and $\underline A \in  \mathrm M^d_{(1,1)}(S)$ valued in some connected base scheme $S$ over $\Spec(o_k)$. Then the space 
\[ \Hom_{S, o_k}( E, A) \]
of $o_k$-linear morphisms admits a positive definite $o_k$-hermitian form defined by the formula
\[ (x, y) \ : = \ \lambda_E^{-1} \ \circ \ y^{\vee} \  \circ \ \lambda_A \ \circ x \ \in \End_{o_k}(E) \ \simeq o_k .\]
Given an integer $m \in \Z$, we define  $\Zed(m)$ to be the moduli space of tuples
\[ \Zed(m)(S) \ = \ \{ (\underline E, \, \underline A, \, x ) \ | \ (\underline E, \underline A ) \in \mathcal M(S), \ x \in \Hom_{S, o_k}(E, A) \text{ with } (x, x) = m \}.\]
This moduli problem is representable by a DM stack, and the natural forgetful map $\Zed(m) \to \mathcal M$ is finite and unramified. We thereby obtain a cycle on $\mathcal M$, which, abusing notation, we denote by the same symbol $\Zed(m)$. 
 
Similarly, for any matrix $T \in \Herm_2(o_k)$, we define $\Zed(T)$ to be the moduli space of tuples
 \[ \Zed(T)(S) \ = \ \{ (\underline E, \, \underline A, \, \mathbf x ) \} \]
 where $(\underline E, \underline A) \in \mathcal M(S)$ as before, and $\mathbf x = [x_1, x_2] \in \Hom_{S, o_k}(E, A)^2$ is a pair of maps such that 
 \[ (\mathbf x , \, \mathbf x) \ := \ \begin{pmatrix} (x_1, x_1) & (x_1, x_2) \\ (x_2, x_1) & (x_2, x_2) \end{pmatrix} \  = \ T. \]
 As before, the forgetful map $\Zed(T) \to \mathcal M$ defines a cycle, denoted by the same symbol. 

Let $T = (\sm{ t_1 & * \\ * & t_2 } ) $. Then there is a decomposition
\[ \Zed(t_1) \ \times_{\mathcal M} \ \Zed(t_2) \ = \ \coprod_{T'  = (\sm{ t_1 & * \\ *&  t_2 }) } \ \Zed(T') \]
over all cycles $\Zed(T')$ corresponding to matrices $T'$ with the same diagonal entries as $T$. 

When $T$ is non-singular, it turns out that the generic fibre $\Zed(T)_k$ is empty, and so the support of $\Zed(T)$ is concentrated in finitely many fibres of non-zero characteristic. In this case, we define 
\[ \widehat\deg\ \Zed(T) \ := \ \sum_{\lie{p} \subset o_k}  \chi \left( \Zed(T)_{\lie p} , \, \mathcal O_{\Zed(t_1)} \otimes^{\mathbb L} \mathcal O_{\Zed(t_2)} \right)  \ \log(N(\lie{p})) \]
as the sum of the contributions to the Serre intersection multiplicity of $\Zed(t_1)$ and $\Zed(t_2)$ that appear within the support of  $\Zed(T)$, weighted by the factors $\log N(\lie{p})$.

Our main result relates this degree to the $T$'th Fourier coefficient of  the derivative at the centre of symmetry ($s=0$) of an Eisenstein series $ \mathcal E(z, s)$, 
 constructed in  \refSec{EisSect}, on the Hermitian upper half-space $\lie{H}_2$ of genus 2; here the derivative 
 \[ \mathcal E'(z, s) \ = \ \frac{\mathrm d}{\mathrm ds}\, \mathcal E(z,s)\]
  is taken with respect to the variable $s \in \C$. 
\begin{maintheorem*} Suppose $T \in \Herm_2(o_k)$ is positive definite, and define
\[ \Diff(T) \ := \ \{ \ell \ \text{inert}, \  \ell \nmid d  ,  \ \ord_{\ell} \det T  \text{ odd } \} \ \bigcup \ \{ \ell | d \ ,  \ \ord_{\ell} \det T \text{ even} \}. \]
If $|\Diff(T)| \geq 1$, then
\[ \widehat\deg \  \Zed(T) \ q^T \ = \ \frac{2 \, h(k)}{|o_k^{\times}|} \ \mathcal E'_T(z, 0) \]
where  $h(k)$ is the class number of $k$, and for $z \in \lie{H}_2$, we set $q^T := e( tr (T z))$.
\end{maintheorem*}
The novel aspects of this theorem emerge when $\Diff(T) = \{p\}$ is a single inert prime  $p $ dividing $d$. In this case, the cycle $\Zed(T) $ is supported in the fibre $\mathcal M_p$. This fibre in turn bears a close relationship to the \emph{Drinfeld upper half-plane} $\D$ which, as we recall in \S 2, admits an intepretation as a moduli space of $p$-divisible groups. 

Our first task, carried out in \S2, is therefore to consider a family of \emph{local Kudla-Rapoport} divisors on $\D$ defined in terms of deformations of $p$-divisible groups, and study their intersection behaviour. Explicit equations for these divisors were found in \cite{San1}; by combining that information with a combinatorial description of the reduced locus $\D_{red}$ in terms of the Bruhat-Tits building for $SL_2(\Q_p)$, we arrive at an explicit, and surprisingly simple, formula for the intersection number of two local Kudla-Rapoport divisors, see \refCor{mainLocGeomCor}.

In \S 3, we show that one can express the same formula in terms of \emph{local representation densities} and their derivatives, which play an esssential role in the determination of the Fourier coefficients of Eisenstein series. The key tool is the development of a closed-form expression, in the particular case that we need, of Hironaka's general formula \cite{Hiro} for Hermitian representation densities in the  unramified setting,  cf.\ \refProp{RepDensProp}. 

Finally, we connect the local calculations with the global setting and prove the main theorem; our approach here follows \cite[\S7--\S10]{KRunnglob} quite closely.

The first half of \S 4 concerns structural results regarding the geometry of $\mathcal M$ and the special cycles $\Zed(T)$; in particular, a \emph{$p$-adic uniformization} result allows us to express $\widehat\deg \ \Zed(T)$ as a product of a local factor, corresponding to a local intersection number as calculated in \S2, and a global factor that is essentially a lattice point count.

We then turn to calculating the right hand side of our main theorem; after recalling some general facts about Siegel-Weil Eisenstein series and their Fourier coefficients, we describe the particular choice of parameters that give rise to the Eisenstein series that figures in our main theorem, and compute the $T$'th Fourier coefficient of its derivative in \refThm{MainThmEis}. The formula we derive also decomposes as the product of a local factor, expressed in terms of representation densities, with a global lattice point count. A direct comparison of the two formulae yields the proof of the main theorem, cf.\ \refCor{mainCor}. 

\subsection*{Acknowledgements}  This work was supported by the SFB/TR 45 `Periods, Moduli Spaces, and Arithmetic of Algebraic Varieties' of the DFG (German Research Foundation).

\subsection*{Notation} Throughout this paper, $k$ will be a fixed imaginary quadratic field, with ring of integers $o_k$ and discriminant $\Delta <0$. We denote the non-trivial Galois action on $k$ by $a \mapsto a'$. 

Let $\widehat\Z := \prod_{\ell} \Z_{\ell}$ and for any prime $p$, we put $\widehat\Z{}^p = \prod_{\ell \neq p} \Z_{\ell}$. If $M$ is a $\Z$-module, we set 
\[ \widehat M \ : = \ M \otimes_{\Z} \widehat \Z , \qquad \text{and} \qquad \widehat M^p := M \otimes_{\Z} \widehat\Z{}^p. \]
\section{Local Kudla-Rapoport cycles on the Drinfeld upper half-plane} \label{DrinfeldSec}
In this section, we fix an inert prime $p\neq 2$. Let $k_p$ denote the completion, and $o_{k,p} \subset k_p$ the ring of integers. Fix an algebraic closure $\F = \overline{\F_p}$ and an embedding
\[ \tau_0 \colon  \ o_{k,p} \big/ (p)  \ \hookrightarrow \ \F. \]
Denote the nontrivial Galois operator on $k_p$ by $a \mapsto a'$, and let $\tau_1(a) := \tau_0(a')$ be the conjugate embedding;
if $W = W(\F)$ is the ring of Witt vectors, then $\tau_0$ and $\tau_1$ lift uniquely to embeddings
\[ \tau_i \colon \  o_{k,p} \ \to \ W. \]
Finally, we let $\Nilp$ denote the category of $W$-schemes such that $p$ is locally nilpotent, and for $S \in \Nilp$, we set $\overline S \ := \ S \times_{W} \F$.

We begin by recalling the construction of the Drinfeld upper-half plane as a moduli space for $p$-divisible groups, following \cite{KRDrinfeld}.
\begin{definition} Let $S \in \Nilp$. An \emph{almost-principally-polarized  $p$-divisible group} over $S$ is a triple $(X, i , \lambda)$ consisting of 
\begin{compactenum}[(i)]
\item a $p$-divisible group $X$ over $S$ of height 4 and dimension 2;
\item an action $i \colon o_{k,p} \to \End( X)$ satisfying the \emph{signature (1,1) condition}: for every $a \in o_{k,p}$, the characteristic polynomial of $i(a) $ on the Lie algebra $\Lie( X)$ is
\[\det( T - \ i (a)|_{\Lie( X)}) \ = \ (T - a)(T - a') \ \in \ \mathcal O_S[T]; \]
\item and a polarization $\lambda$ such that 
\[ p \cdot \ker(\lambda) = 0, \qquad |\ker(\lambda)| = p^2,   \]
and such that  the induced Rosati involution $*$ satisfies 
\[ i(a) ^* = i(a') \qquad \text{for all } a \in o_{k,p}. \]
\end{compactenum}
\end{definition}

The following lemma can easily be proved by considering the classification of rational Dieudonn\'e modules over algebraically closed fields.
\begin{lemma} Suppose $(\X, i_{\X}, \lambda_{\X})$ is a triple over $\F = \overline{\mathbb{F}_p}$ as above. Then $\X$ is supersingular. Moreover, the data $(\X, i_{\X}, \lambda_{\X})$  is unique up to isogeny; that is, given another  triple $(\X', i_{\X'}, \lambda_{\X'})$, there exists an $o_{k,p}$-linear isogeny $\X \to \X'$ such that the pullback of $\lambda_{\X'}$ is $\lambda_{\X}$.  \qed
\end{lemma}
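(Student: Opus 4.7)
The plan is to work with the rational Dieudonn\'e module $N := M(\X) \otimes_W W[1/p]$, a $4$-dimensional $W[1/p]$-vector space carrying a $\sigma$-semilinear Frobenius $F$, the $o_{k,p}$-action, and a Frobenius-compatible pairing $\psi$ coming from $\lambda_\X$. Since $p$ is inert in $k$, one has $k_p \otimes_{\Q_p} W[1/p] \simeq W[1/p] \times W[1/p]$ via $(\tau_0, \tau_1)$, so $N$ decomposes as $N_0 \oplus N_1$ with each $N_i$ of $W[1/p]$-dimension $2$. The relation $F \circ i_\X(a) = i_\X(a') \circ F$ forces $F$ to interchange $N_0$ and $N_1$, while the Rosati condition $i_\X(a)^\ast = i_\X(a')$ forces $\psi|_{N_0 \times N_0}$ and $\psi|_{N_1 \times N_1}$ to vanish, so $\psi$ restricts to a perfect pairing $N_0 \times N_1 \to W[1/p]$.

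To prove supersingularity, I would compute the slopes of $F^2$ on $N_0$. From $FV = VF = p$ and the signature $(1,1)$ condition (which gives the colengths $[M_0 : VM_1] = [M_1 : VM_0] = p$), one chases colengths to obtain $[M_0 : FM_1] = [M_1 : FM_0] = p$, and hence $[M_0 : F^2 M_0] = p^2$. The chain $F^2 M_0 \subset FM_1 \subset M_0$ with each step of colength exactly $p$ shows that the Hodge polygon of $(M_0, F^2)$, viewed as a $\sigma^2$-isocrystal, has slopes $(1,1)$; Mazur's inequality then forces the Newton slopes of $F^2$ on $N_0$ to equal $(1,1)$ as well. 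Consequently all slopes of $F$ on $N$ equal $1/2$, and $\X$ is supersingular.

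For uniqueness up to isogeny, let $(\X', i_{\X'}, \lambda_{\X'})$ be a second triple with rational Dieudonn\'e module $N'$. By Dieudonn\'e--Manin, $N$ and $N'$ are both isoclinic of slope $1/2$, so they are isomorphic as bare isocrystals. The space $\Hom^{0}_{o_{k,p}}(\X, \X')$ of $o_{k,p}$-linear rational quasi-isogenies is then a $2$-dimensional $k_p$-vector space on which $\psi$ and $\psi'$ induce a nondegenerate hermitian form; since any two nondegenerate rank-$2$ hermitian forms over $k_p$ agree up to similitude, one can choose a quasi-isogeny $\rho \colon \X \to \X'$ with $\rho^\ast \lambda_{\X'} = c\,\lambda_\X$ for some $c \in \Q^\times$. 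Rescaling then removes the factor $c$, yielding the required isogeny.

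The main obstacle is the lattice-level computation of the Hodge polygon of $F^2$ on $N_0$: one must translate the signature $(1,1)$ condition, a statement about $\Lie(\X) = M/VM$, into precise colengths for $F$ and $V$ between $M_0$ and $M_1$, thereby ruling out the Hodge polygon $(0,2)$ that would permit ordinary behaviour. Once supersingularity is established, the uniqueness step reduces essentially formally to Dieudonn\'e--Manin together with the classification of binary hermitian forms over $k_p$.
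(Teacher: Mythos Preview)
Your overall strategy---pass to the rational Dieudonn\'e module and use Dieudonn\'e--Manin---is exactly what the paper has in mind. However, the supersingularity step contains a genuine gap.

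You claim that the chain $F^{2}M_{0} \subset F M_{1} \subset M_{0}$, with each inclusion of colength $p$, forces the Hodge polygon of $(M_{0}, F^{2})$ to be $(1,1)$. This is false: take the ordinary case $\X = \mu_{p^{\infty}}^{2} \times (\Q_{p}/\Z_{p})^{2}$ with the $o_{k,p}$-action swapping the two copies in each factor. Writing $M_{0} = W e_{0} \oplus W f_{0}$ with $F e_{0} = e_{1}$, $F e_{1} = e_{0}$, $F f_{0} = p f_{1}$, $F f_{1} = p f_{0}$, one has $F M_{1} = W e_{0} \oplus p W f_{0}$ and $F^{2} M_{0} = W e_{0} \oplus p^{2} W f_{0}$, so both colengths are $p$ yet $M_{0}/F^{2}M_{0} \simeq W/p^{2}$ has Hodge slopes $(0,2)$. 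The signature $(1,1)$ condition alone, which is all you invoke, does \emph{not} exclude this; indeed the Lie algebra here is $M^{(0)}/p M^{(0)}$, on which $o_{k,p}$ visibly acts with both characters.

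What actually kills the ordinary case is the polarization degree, which you set up but never use. With the pairing $\psi$ as above, the only nonzero values are $\psi(e_{0},f_{1}) = u$ and $\psi(e_{1},f_{0}) = u^{\sigma}$ for some $u \in W[1/p]^{\times}$ (by Frobenius and $o_{k,p}$-compatibility). A direct computation gives $[M^{\perp}:M] = p^{4\,\ord_{p}(u)}$, so $|\ker\lambda| \in \{1, p^{4}, p^{8}, \ldots\}$, never $p^{2}$. The mixed case $(0,\tfrac12,\tfrac12,1)$ is excluded for the reason you would expect: the slope-$0$ piece has rank $1$, and $k_{p}$ cannot act on a one-dimensional $\Q_{p}$-isocrystal. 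Once those two cases are eliminated, only the supersingular polygon remains.

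A smaller point on uniqueness: $\Hom^{0}_{o_{k,p}}(\X,\X')$ is not $2$-dimensional over $k_{p}$ but $4$-dimensional (it is a torsor for $\End^{0}_{o_{k,p}}(\X) \simeq M_{2}(k_{p})$), and the passage from $\rho^{\ast}\lambda_{\X'} = c\,\lambda_{\X}$ to $c=1$ requires knowing that the similitude character on the unitary similitude group is surjective onto $\Q_{p}^{\times}$, which you should justify. The idea is correct, but the bookkeeping needs tightening.
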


We fix a triple $(\X, i_{\X}, \lambda_{\X})$ over $\F$ once and for all, which will serve as  a base point for the following moduli problem:

\begin{definition} \label{DrinfeldUHPDef}
Let $\D$ denote the following moduli problem over $\Nilp$: for a base scheme $S\in \Nilp$, the points $\D(S)$ parametrize isomorphism classes of tuples
\[ \D(S) \ := \ \left\{ \underline X \ = \ (X, i_X, \lambda_X, \rho_X ) \right\}_{/ \simeq} \]
where $(X, i_X, \lambda_X)$ is an almost-principally-polarized $p$-divisible group over $S$, and
\[ \rho_{X} \colon X \times_S \overline S \ \to \ \X \times_{\F} \overline S \]
is a height-0 quasi-isogeny of $p$-divisible groups over $\overline S := S \times_W \F$ that is equivariant with respect to the action of $o_k$, and such that $\rho_{X}^*(\lambda_{X,\overline S}) =  \lambda_{\X, \overline S}$. 

This moduli problem is representable by (a formal model of) the Drinfeld upper half-plane, cf.\ \cite{KRDrinfeld}. In particular, it is a regular formal scheme over $\Spf(W)$.
\end{definition}

Let $\Y $ be supersingular $p$-divisible group $\Y$ over $\F$ of dimension $1$ and height $2$ (i.e.\ the $p$-divisible group of a supersingular elliptic curve). We also fix an action $i_{\Y} \colon o_{k,p} \to \End(\Y)$, and a principal polarization $\lambda_{\Y}$ such that the induced Rosati involution acts by Galois conjugation on the image $i_{\Y}(o_{k,p})$.

Following \cite{KRunnloc}, we define the space of \emph{special local homomorphisms}: 
\begin{equation}
\V  \ := \ \Hom_{o_{k,p}}(\Y, \X ) \otimes_{\Z_p} \Q_p
\end{equation}
This space comes equipped with a natural Hermitian form: for $b_1, b_2 \in \V$, put
\[ (b_1, b_2) \ := \ \lambda_{\Y}^{-1} \circ b_2^{\vee} \circ \lambda_{\X} \circ b_1 \ \in \ \End_{o_{k,p}}(\Y) \otimes \Q_p \ \simeq \ k_p. \]
It turns out that with this Hermitian form, $\V$ is split, cf.\ \cite[Remark 3.4]{San1}.

%
%
%

\begin{definition} 
\begin{compactenum}[(i)] 
\item Suppose $\Lambda \subset \V$ is an $o_{k,p}$-lattice, and let $\Lambda^{\sharp} $ denote the dual lattice. We say that $\Lambda$ is a `vertex lattice' of type 0 (resp. type 2) if $  \Lambda^{\sharp}= \Lambda$ (resp. $\Lambda^{\sharp} = p \Lambda$). In the sequel, we shall use the term `lattice' to mean a vertex lattice of type 0 or 2.  
\item Let $\mathscr B$ denote the Bruhat-Tits tree for $SU(\V)$, which is a graph with the following description. The vertices are the vertex lattices, and edges only occur between lattices of differing type. Two lattices $\Lambda$ and $\Lambda'$ of type 0 and 2 respectively are connected by an edge if and only if 
\[ p \Lambda' \ \subset \ \Lambda \ \subset \ \Lambda', \]
where the successive quotients are $\F_{p^2}$-vector spaces of dimension 1. In particular, this graph is a $(p+1)$-regular  tree.
\end{compactenum}
\end{definition}

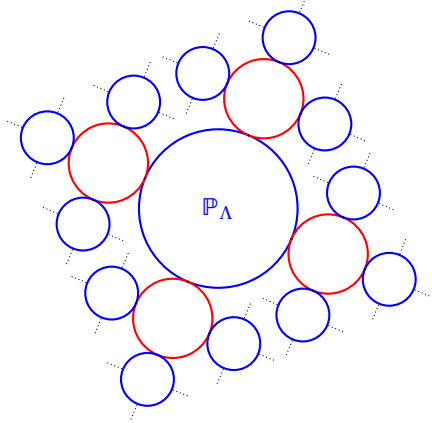
\begin{figure}[h]  

	\begin{tikzpicture}[grow cyclic]
\tikzstyle{level 1}=[level distance = 45pt, sibling angle=90]
\tikzstyle{level 2}=[level distance = 25pt,sibling angle=90]
\tikzstyle{level 3}=[level distance = 20pt, sibling angle=90]
\path[rotate = 22.5]
	node[style={circle, draw, thick, color = blue, minimum size = 60pt, inner sep = 2pt}] {$\mathbb P_{\Lambda}$}
		child foreach \cI in {1,...,4}{ 
			node[style={circle, draw, thick, color = red, minimum size = 30pt, inner sep = 2pt}] {}
			child foreach \cII in {1,...,3} { 
			  	node[style={circle, thick, draw, color = blue, minimum size = 20pt, inner sep = 2pt}] { }
									child[edge from parent/.style={densely dotted, draw}] foreach \cIIII in {1,...,3}{ node{} }
			}
		}  
		;
\end{tikzpicture}
\caption{A portion of  $\D_{red}$ for $p=3$ as a union of projective lines indexed by vertex lattices.}
 \label{DrinRedFig}
\end{figure}

The reduced locus $\D_{red}$ can be described by the Bruhat-Tits tree $\mathscr B$ as follows. Each irreducible component of $\D_{red}$ is a projective line $ \mathbb P_{\Lambda} $ over $\F$ indexed by a vertex lattice. Two such lines $\mathbb P_{\Lambda}$ and $\mathbb P_{\Lambda'}$ intersect at at most one point, which we call a `superspecial' point, and this happens if and only if $\Lambda$ and $\Lambda'$ are neighbours in $\mathscr B$. On a given component $\mathbb P_{\Lambda}$, the superspecial points are precisely the $\F_p$-rational points, of which there are $p+1$, see Figure \ref{DrinRedFig}.

\begin{definition} \label{D0Def}
Let $\D_0$ be  the moduli space on $\Nilp$ that, for a scheme $S \in \Nilp$, parametrizes isomorphism classes of tuples
\[ \D_0(S) \ := \ \left\{  \underline Y =  (Y, i_Y, \lambda_Y, \rho_Y) \right\} /_{ \simeq};\]
here $Y$ is a $p$-divisible group over $S$ of dimension 1 equipped with an action $i_Y \colon o_{k,p} \to \End(Y)$, and $\lambda_Y$ is a compatible principal polarization. Finally, 
\[ \rho_Y \colon \  Y \times_{S}  \overline S \ \to \  \Y \times_{\F} \overline S \]
is an $o_{k,p}$-linear quasi-isogeny of height 0.
\end{definition}

%
%
%

Note that the moduli functor $\D_0$ is trivial, i.e.\ it is represented by $\Spf(W)$. Indeed,  by using Gross' theory of (quasi-)canonical liftings for example, cf.\  \cite{Gross}, one may show that for any $S \in \Nilp$, there is a \emph{unique} lift $\underline \Y_S$ of $\underline \Y$ that is determined by the action $i_{\Y}$. 

We turn now to the local \emph{Kudla-Rapoport} cycles, which are parametrized by elements in $\V$:

\begin{definition} \label{locCyclesDef}
Let $b \in \V$. We define the \emph{local Kudla-Rapoport cycle} Z(b) as the closed formal subscheme of $\D_0 \times_W \D$ defined by the following moduli problem: for $S \in \Nilp$, the set of $S$-points  $Z(b)(S)$ is the locus of pairs $(\underline Y, \underline X )  \in (\D_0 \times_W \D)(S)$ such that the quasi-morphism
\[ \rho_{X}^{-1} \circ b \circ \rho_Y \colon \  Y   \times_{S} {\overline S}  \  \to  \ X \times_S \overline S \]
lifts to a morphism $Y \to X$ over $S$. 

Similarly, if $\mathbf b = [b_1, b_2] \in \V^2$, then we define the cycle $Z(\mathbf b)$ to be the locus $(\underline Y, \underline X)$ where  $\rho_{X}^{-1} \circ b_i \circ \rho_Y$ lifts for $i = 1,2$. 
\end{definition}

These cycles were studied in detail in \cite{San1}, where it was shown that the cycles $Z(b)$ corresponding to a single element $b \in \V$ are in fact divisors. We also have an explicit decomposition of these divisors into irreducible components, as follows; since $\D_0 \simeq \Spf W$, we shall henceforth implicitly identify the formal schemes 
\[ \D_0 \times_{\Spf(W)} \D \ \simeq \ \D\]
over $\Spf(W)$.

\begin{definition} \label{centLattDef}
 Let $b \in \mathbb V$ with $\ord_p (b,b)  = m \geq 0$. Set $t = \lfloor \frac{m+1}{2} \rfloor$ and $\beta := p^{-t} b$, so that $\ord_p (\beta,\beta)$ is either $0$ or $-1$. Then by \cite[Lemma 3.8]{San1}, there exists a unique vertex lattice $\Lambda$ such that $\beta \in \Lambda \setminus p \Lambda$ and morever $\Lambda$ is of type 0 (resp. type 2) if $m$ is even (resp. odd). We call this lattice the \emph{`central lattice'} for $b$. 
\end{definition}

\begin{theorem}[{\cite[Theorem 3.14]{San1}}] \label{San1Thm}
Let $ b \in \mathbb V$, such that $m := \ord_p (b, b) \geq 0$.
Then, as a cycle on $\mathcal D$, 
\[ Z( b) \ = Z( b)^{h} + \sum_{\Lambda \in \mathcal B( b)} m( b, \Lambda) \ \mathbb P_{\Lambda} \ =: \ Z(b)^h + Z(b)^v, \]
where 
\begin{compactenum}[(i)]
\item $Z( b)^{h}$ is a horizontal divisor isomorphic to $\Spf(W)$ that meets the special fibre of $\mathcal D$ at a single non-superspecial point on the component $\mathbb P_{\Lambda_0}$ corresponding to the central lattice $\Lambda_0$;
\item $\mathcal B(b) := \{ \Lambda \text{ vertex lattice} \ | \ b \in \Lambda \} $;
\item and the multiplicities $m( b, \Lambda)$ are given by the  formula
\begin{equation*} 
m( b, \Lambda) \ =  \
\begin{cases}  t  \ - \  \lfloor d(\Lambda, \Lambda_0) / 2 \rfloor ,  & \text{if } m = 2t \text{ is even} \\
t  \ - \ \lfloor (d(\Lambda, \Lambda_0 )+1)/2 \rfloor , & \text{if } m = 2t-1 \text{ is odd};
 \end{cases} 
\end{equation*}
here $d(\Lambda, \Lambda_0$ is the distance between the two vertex lattices in the Bruhat-Tits tree. 
\end{compactenum}  \qed
\end{theorem}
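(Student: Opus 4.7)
The plan is to combine the explicit equations for the local Kudla-Rapoport cycles from \cite{San1} with Kr\"amer's local model of $\mathcal{D}$ and the combinatorial stratification of $\mathcal{D}_{red}$ by the Bruhat-Tits tree $\mathscr B$. There are three separable tasks: identify the support of the vertical part, produce the horizontal component, and compute the multiplicities.

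For the support, I would show that a component $\mathbb{P}_\Lambda$ lies in $Z(b)_{red}$ if and only if $b \in \Lambda$, which matches $\mathcal{B}(b)$. The idea is that, via Dieudonn\'e theory, the generic point of $\mathbb{P}_\Lambda$ corresponds to a $p$-divisible group whose rational Dieudonn\'e module is canonically identified with $\Lambda \otimes \Q_p$; the lifting condition defining $Z(b)$ unfolds to the requirement that $b$ act integrally on this module, i.e.\ $b \in \Lambda$. For the horizontal component, the datum $(\X, b)$ with its extra structure defines a rigid deformation problem to which Gross's theory of quasi-canonical liftings applies and produces a unique solution over $W$, giving $Z(b)^h \simeq \Spf(W)$. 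The reduction of this lift lies on a unique component $\mathbb{P}_{\Lambda_0}$, where $\Lambda_0$ is the central lattice of \refDef{centLattDef}; indeed, $\Lambda_0$ is characterized as the unique vertex lattice minimally compatible with the integral structure carrying $b$, which must coincide with the Dieudonn\'e module of the reduction of the canonical lift. That the reduction is non-superspecial follows from the fact that the canonical lift of a smooth point is smooth, so it cannot land on the intersection of two components.

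For the multiplicities, the heart of the argument, I would work locally at each superspecial point $P$, which corresponds to an edge $(\Lambda, \Lambda')$ of $\mathscr B$. Kr\"amer's local model describes a formal neighborhood of $P$ in $\mathcal{D}$ of the form $\Spf W[[s,t]]/(st - p)$, with $\{s=0\}$ and $\{t=0\}$ cutting out the two adjacent components. Using the explicit defining equations for $Z(b)$ from \cite{San1} (or, equivalently, computing the obstruction to lifting $\rho_X^{-1} \circ b \circ \rho_Y$ Dieudonn\'e-theoretically), I would expand its local equation in these coordinates and read off the $s$-adic and $t$-adic orders, which are exactly $m(b,\Lambda)$ and $m(b,\Lambda')$. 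Starting from the central lattice $\Lambda_0$, where a direct computation gives the multiplicity $t = \lfloor (m+1)/2 \rfloor$, I would then verify the general formula by induction on $d(\Lambda, \Lambda_0)$: each step along an edge away from $\Lambda_0$ decreases the multiplicity by $0$ or $1$ according to a parity rule dictated by the local expansion, and the closed-form expressions
\[ m(b,\Lambda) = t - \lfloor d(\Lambda,\Lambda_0)/2 \rfloor \ \ (m = 2t), \qquad m(b,\Lambda) = t - \lfloor (d(\Lambda,\Lambda_0)+1)/2 \rfloor \ \ (m = 2t - 1) \]
emerge as the explicit solution. The support of the vertical part terminates precisely when the multiplicity reaches zero, thereby recovering $\mathcal{B}(b)$.

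The main obstacle will be the bookkeeping at superspecial points: the formula depends both on $d(\Lambda,\Lambda_0)$ and on the parity of $m = \ord_p(b,b)$, and the type of $\Lambda$ alternates along the tree, so one must ensure that the local expansions at adjacent superspecial points glue consistently without a ``phase error'' in the alternation. A related subtlety is that the two cases in the multiplicity formula are not symmetric under swapping $\Lambda$ and $\Lambda'$ across an edge, so the induction along the tree must be carried out with careful attention to which lattice type plays the role of $\Lambda_0$. A final routine check is that $Z(b)^h$ is transverse to $\mathbb{P}_{\Lambda_0}$ at the non-superspecial point where they meet, so that it contributes nothing to the vertical multiplicities and the decomposition $Z(b) = Z(b)^h + Z(b)^v$ is honest.
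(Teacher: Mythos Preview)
The paper does not give its own proof of this statement: it is quoted from \cite[Theorem~3.14]{San1} and closed with \qed. So there is no in-paper argument to compare against; the result is imported wholesale.

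Your outline is a reasonable reconstruction of the strategy, and it is broadly consistent with what one can infer about \cite{San1} from the way the present paper uses it (see the proof of \refProp{hhProp}, which invokes \cite[Propositions~3.7,~3.10,~3.11]{San1}). Two remarks on points where your sketch drifts from what \cite{San1} actually does. First, the horizontal component is not produced there via Gross's quasi-canonical lifting theory; rather, \cite{San1} writes down explicit coordinates on a formal affine neighbourhood $\Spf\,W[T,(T^p-T)^{-1}]^{\vee}$ of a non-superspecial point and shows that $Z(b)^h$ is cut out by a linear equation $rT-s=0$ (or $s'T+r'=0$ in the type~2 case), from which $Z(b)^h\simeq\Spf(W)$ is immediate. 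Gross's theory enters the present paper only to explain why $\D_0\simeq\Spf(W)$, not for $Z(b)^h$. Second, your identification ``the rational Dieudonn\'e module is canonically identified with $\Lambda\otimes\Q_p$'' is not quite right: $\Lambda$ lives in the space $\V$ of special homomorphisms, not in the isocrystal of $\X$, and the link between vertex lattices in $\V$ and components $\mathbb P_\Lambda$ goes through the alternative description of $\D$ in \cite{KRDrinfeld} rather than a direct identification of Dieudonn\'e modules. The support criterion $b\in\Lambda$ is correct, but the mechanism is the explicit deformation-theoretic equations of \cite{San1}, not the heuristic you give.

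Your plan for the multiplicities via the semistable local model $W[[s,t]]/(st-p)$ at superspecial points and induction along the tree is the right shape, and matches the spirit of \cite{San1}. (Calling this ``Kr\"amer's local model'' is a slight misattribution; it is just the standard Drinfeld/Deligne semistable picture.)
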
 
\begin{remark}  \label{bdyTRmk}
\begin{compactenum}[(i)]
\item For notational consistency, if $b \in \mathbb V$ with $\ord_p (b,b) <0$, we define $Z(b) = 0$ and  $\mathcal B(b) = \emptyset$. 
\item By \cite[Lemma 3.12]{San1},  $\Lambda \in \mathcal B( b)\iff d (\Lambda, \Lambda_0) \leq  m $. In other words, $\mathcal B(b)$  is simply the ball of radius $m$ in the Bruhat-Tits tree, centred at $\Lambda_0$.
\item Note that for vertex lattices that lie on the boundary of $\mathcal B( b)$, i.e.\ those $\Lambda$ for which $d(\Lambda, \Lambda_0) = m$, we have $m( b, \Lambda) = 0$. While such vertices contribute nothing to $Z( b)$, it will make our formulas somewhat neater if include them in $\mathcal B( b)$. 
\item The following reformulation of the multiplicities will also be useful:
\begin{equation} \label{multAltEqn}
  m(b, \Lambda) \ = \  \frac12  \cdot \begin{cases} m-d(\Lambda, \Lambda_0) ,  & \text{if } m \equiv d(\Lambda, \Lambda_0) \pmod 2 \\
 m - d(\Lambda, \Lambda_0) + 1, & \text{if } m \not\equiv d(\Lambda, \Lambda_0) \pmod 2. 
 \end{cases} 
\end{equation}
\end{compactenum}
\end{remark}


\subsection{Local intersection numbers} {\ \\}

\noindent Given two formal closed subschemes $Z$ and $Z'$ of $\D$ such that the sum of their defining ideals is open in $\mathcal O_{\D}$, we define their intersection number to be
\[ \la Z, \ Z' \ra  \ := \ \chi( \mathcal O_Z \otimes^{\mathbb L} \mathcal O_{Z'} ) , \]
where the tensor product is taken in the derived sense, and $\chi$ is the Euler characteristic of the resulting complex cf.\ \cite[\S 4]{KRinv}.

If $Z(\mathbf b)$ is a cycle corresponding to a pair $\mathbf b = [b_1, b_2] \in \V^2$, we set 
\[ \deg \, Z(\mathbf b) \ := \ \la Z(b_1), \, Z(b_2) \ra. \]

The aim of this section is to compute the intersection $\la Z( b_1), Z( b_2) \ra$ of two local cycles attached to  linearly independent vectors $ b_1,  b_2 \in \mathbb V$ where $\ord_p (b_i, b_i) \geq 0$. 
Via \refThm{San1Thm}, the intersection pairing $\la Z( b_1), Z( b_2) \ra$ can be expanded as
\begin{align}  \label{intExpEqn}
\langle Z( b_1), Z( b_2) \rangle \ =&  \ \langle Z( b_1)^{h}, Z( b_2) \rangle  \ + \ \sum_{\Lambda \in \mathcal{B}( b_1) \cap \mathcal{B}( b_2) }  m( b, \Lambda) \ \langle \mathbb{P}_{\Lambda}, Z( b_2) \rangle \\ \notag
=& \ \langle Z( b_1)^{h}, Z( b_2)^h \rangle \ + \ \langle Z( b_1)^{h}, Z( b_2)^v \rangle   + \sum_{\Lambda \in \mathcal{B}(b_1) \cap \mathcal{B}( b_2) }  m( b_1, \Lambda) \ \langle \mathbb{P}_{\Lambda}, Z( b_2) \rangle.
\end{align}

\begin{lemma} \label{P1IntLemma}
 Let $\Lambda_1$ and $\Lambda_2$ denote the `central lattices' for $ b_1$ and $ b_2$ respectively, in the notation of \refThm{San1Thm}. Then 
\[ \la Z( b_1)^{h} ,  \ Z( b_2)^v \ra  \ = \ \begin{cases} m( b_2, \Lambda_1), & \text{if } \Lambda_1 \in \mathcal B( b_2), \\ 0, & \text{otherwise}. \end{cases} \]
For any vertex lattice $\Lambda$, we also have
\[ \la \mathbb P_{\Lambda}, \ Z( b_2) \ra \ =  \
\begin{cases}
	1, & 	\text{if } \Lambda \in \mathcal B( b_2) \text{ and } d(\Lambda, \Lambda_2)  \equiv \ord_p (b_2, b_2) \pmod{2}  \\ 
	-p, & 	\text{if } \Lambda \in \mathcal B( b_2) \text{ and } d(\Lambda, \Lambda_2)  \not\equiv \ord_p( b_2, b_2) \pmod{2} \\
	0 & 	\text{if } \Lambda \notin \mathcal B( b_2).
\end{cases}
\]

\begin{proof}
By \cite[Equation (4.7)]{KRinv}, we have that for any vertex lattice $\Lambda$,
\[ \la Z( b_1)^{h},\,  \mathbb P_{\Lambda} \ra = \begin{cases}1, & \text{if } \Lambda = \Lambda_1 \\ 0, & \text{otherwise}. \end{cases} \]
The first statement in the proposition follows immediately.

Next, we recall the following formula, cf.\  \cite[Lemma 4.7]{KRinv}: if $\Lambda$ and $\Lambda'$ are any two vertex lattices, then
\[ \la \mathbb P_{\Lambda}, \mathbb P_{ \Lambda' } \ra \ = \ \begin{cases} -(p+1), & \text{if } \Lambda = \Lambda' \\ 1, & \text{if } d(\Lambda, \Lambda') = 1 \\ 0, & \text{otherwise}. \end{cases} \]
Now suppose $\Lambda$ is any  vertex lattice, so that
\[ \la \mathbb P_{\Lambda}, \ Z( b_2) \ra = \la \mathbb P_{\Lambda}, \ Z( b_2)^{h} \ra \ + \sum_{\substack{ \Lambda' \in \mathcal{B}( b_2) \\ d(\Lambda', \Lambda) \leq 1 }} m( b, \Lambda') \  \la \mathbb P_{\Lambda} , \ \mathbb P_{\Lambda'} \ra.  \]
When $\Lambda \notin \mathcal{B}( b_2)$, it follows immediately that $\la \mathbb P_{\Lambda}, Z( b_2) \ra =0$, cf. \refRemark{bdyTRmk} for the case when $\Lambda$ is of distance 1 from the boundary.

Suppose next that $\Lambda \in \mathcal{B}( b_2)$ and $\Lambda \neq \Lambda_2$. Then $\Lambda$ has one neighbour, say $\Lambda^{\sharp}$, that is strictly closer to $\Lambda_2$ than $\Lambda$ is. Suppose further that $d(\Lambda, \Lambda_2) \equiv m \pmod{2}$. Then \eqref{multAltEqn} implies that 
\[ m( b_2, \Lambda^{\sharp}) = m( b_2, \Lambda ) + 1.\]
For any other neighbour $\Lambda^{\flat}$ of $\Lambda$, of which there are $p$,
 \[ m( b_2, \Lambda^{\flat} ) = m( b_2, \Lambda). \]
Therefore, we obtain
\begin{align*}
 \la \mathbb P_{\Lambda}, \ Z( b_2) \ra \ &= \ m( b_2, \Lambda^{\#}) \la  \mathbb P_{\Lambda},  \mathbb P_{\Lambda^{\sharp }}\ra +   m( b_2, \Lambda) \la  \mathbb P_{\Lambda},  \mathbb P_{\Lambda}\ra \ + \ \sum_{\Lambda^{\flat}} \ m( b_2, \Lambda^{\flat}) \la  \mathbb P_{\Lambda},  \mathbb P_{\Lambda^{\flat }}\ra 
\\
&=\  \left(  m( b_2, \Lambda) + 1 \right) \ + \ m( b_2, \Lambda)( - p - 1) \ + \ p\ m( b_2, \Lambda)  \\
&= \ 1.
\end{align*}
The case where $d(\Lambda, \Lambda_2) \not\equiv m \pmod 2 $ follows from similar considerations. 

Finally, suppose $\Lambda = \Lambda_2$. If $\ord_p (b_2, b_2) = 2t $ is even, then we see by \eqref{multAltEqn} that $\Lambda_2$ and all of its neighbours occur in $Z( b_2)$ with the same multiplicity $t$, so that
\begin{align*}
  \la \mathbb P_{\Lambda_2}, \ Z( b_2) \ra \ &= \   \la \mathbb P_{\Lambda_2}, \ Z( b_2)^{h} \ra  \ + \ t \cdot  \la \mathbb P_{\Lambda_2}, \mathbb P_{\Lambda_2} \ra   \ + \sum_{d(\Lambda^{\flat}, \Lambda_2) = 1}  t  \la \mathbb P_{\Lambda_2}, \mathbb P_{\Lambda^{\flat}} \ra  \\ 
 &= \  1 \ + \ t(-p-1) \ + \ t \cdot (p+1) \ = \ 1.
\end{align*}
On the other hand, if $\ord_p(b_2,b_2) = 2t -1 $ is odd, then 
\[ m(b_2,\Lambda_2) \ = \  t \ =  \ m(b_2, \Lambda^{\flat}) + 1\]
  for every neighbour $\Lambda^{\flat}$. Thus
\begin{align*}
  \la \mathbb P_{\Lambda_2}, \ Z( b_2) \ra &=   \la \mathbb P_{\Lambda_2}, \ Z( b_2)^{h} \ra  + t \cdot  \la \mathbb P_{\Lambda_2}, \mathbb P_{\Lambda_2} \ra   + \sum_{d(\Lambda^{\flat}, \Lambda_2) = 1}  (t-1)  \la \mathbb P_{\Lambda_2}, \mathbb P_{\Lambda^{\flat}} \ra  \\ 
 &= 1 \ + \ t(-p-1) \ + \ (t-1) \cdot (p+1) = -p. 
\end{align*}
\end{proof}
\end{lemma}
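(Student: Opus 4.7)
The plan is to decompose $Z(b_2) = Z(b_2)^h + Z(b_2)^v = Z(b_2)^h + \sum_{\Lambda' \in \mathcal{B}(b_2)} m(b_2, \Lambda')\, \mathbb{P}_{\Lambda'}$ via \refThm{San1Thm} and compute each resulting intersection pairing using two inputs from \cite{KRinv}: the formula $\la Z(b)^h, \mathbb{P}_\Lambda\ra = \delta_{\Lambda, \Lambda(b)}$ (where $\Lambda(b)$ is the central lattice of $b$) from \cite[(4.7)]{KRinv}, and the pairing of components $\la \mathbb{P}_\Lambda, \mathbb{P}_{\Lambda'}\ra$, which equals $-(p+1)$ when $\Lambda = \Lambda'$, equals $1$ when they are neighbours in $\mathscr B$, and vanishes otherwise, cf.\ \cite[Lemma 4.7]{KRinv}.

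The first statement is then immediate: expanding $Z(b_2)^v$ and applying the horizontal-component formula picks out the single term indexed by $\Lambda_1$, and this contributes $m(b_2, \Lambda_1)$ provided $\Lambda_1 \in \mathcal{B}(b_2)$ and zero otherwise.

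For the second statement, I would split according to the position of $\Lambda$. The case $\Lambda \notin \mathcal{B}(b_2)$ reduces to showing that none of the neighbours of $\Lambda$ carry positive multiplicity; here the convention in \refRemark{bdyTRmk}(iii) of including boundary lattices in $\mathcal{B}(b_2)$ with multiplicity zero is precisely what makes the bookkeeping work. For $\Lambda \in \mathcal{B}(b_2)$ with $\Lambda \neq \Lambda_2$, there is a unique neighbour $\Lambda^{\sharp}$ closer to $\Lambda_2$, while the remaining $p$ neighbours $\Lambda^{\flat}$ are strictly farther; the parity-based formula \eqref{multAltEqn} shows that either $m(b_2, \Lambda^\sharp) = m(b_2, \Lambda)+1$ with all $m(b_2, \Lambda^\flat) = m(b_2, \Lambda)$, or the roles of $\Lambda^\sharp$ and $\Lambda^\flat$ are swapped, according as $d(\Lambda, \Lambda_2)$ matches the parity of $m = \ord_p(b_2,b_2)$ or not. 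Substituting into the expansion
\[ \la \mathbb{P}_\Lambda, Z(b_2) \ra \ = \ \la \mathbb{P}_\Lambda, Z(b_2)^h\ra \, + \, m(b_2, \Lambda)\la \mathbb{P}_\Lambda, \mathbb{P}_\Lambda \ra \, + \, \sum_{\Lambda' \sim \Lambda} m(b_2, \Lambda')\la \mathbb{P}_\Lambda, \mathbb{P}_{\Lambda'}\ra \]
and using $-(p+1) + p = -1$, the two parity cases collapse respectively to $+1$ or $-p$ by direct arithmetic.

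The remaining case $\Lambda = \Lambda_2$ must be treated separately since $\Lambda_2$ has no neighbour closer to itself; here the horizontal contribution $\la \mathbb{P}_{\Lambda_2}, Z(b_2)^h\ra = 1$ is essential, and one verifies the two parities ($m$ even versus $m$ odd) give $+1$ and $-p$ respectively, matching the stated cases. The main obstacle, as always with such combinatorial decompositions, is keeping the parity accounting consistent across interior vertices, boundary vertices, and the centre; once \eqref{multAltEqn} is in hand this is routine, and the cancellation $-(p+1) + p = -1$ does the work in every case.
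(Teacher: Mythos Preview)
Your approach is essentially identical to the paper's: same two inputs from \cite{KRinv}, same case split on the position of $\Lambda$ relative to $\mathcal B(b_2)$ and $\Lambda_2$, same use of \eqref{multAltEqn} to compare multiplicities of $\Lambda$ with its neighbours.

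One inaccuracy: in the parity case $d(\Lambda,\Lambda_2)\not\equiv m\pmod 2$, the roles of $\Lambda^{\sharp}$ and $\Lambda^{\flat}$ are \emph{not} simply swapped. Rather, \eqref{multAltEqn} gives $m(b_2,\Lambda^{\sharp})=m(b_2,\Lambda)$ and $m(b_2,\Lambda^{\flat})=m(b_2,\Lambda)-1$; the far neighbours drop by one instead of the near neighbour rising by one. With the ``swapped'' reading the arithmetic would yield $+p$ rather than $-p$. This is a slip of phrasing, not of method, but you should state the second parity case explicitly to avoid it.
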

\begin{lemma}  \label{TIntersectLemma}
Let $ b_1,  b_2 \in \mathbb V$, with corresponding `central lattices' $\Lambda_1$ and $\Lambda_2$  respectively and assume 
\[ m_1 \ := \  \ord_p(b_1, b_1) \ \leq  \ \ord_p(b_2,b_2) \ =: \ m_2 . \]
 Suppose further that $\mathcal{B}( b_1) \cap \mathcal B( b_2)$ is non-empty, and let $\mathfrak{A}$ denote the unique shortest path between $\Lambda_1$ and $\Lambda_2$. Then  the intersection $\mathcal{B}( b_1) \cap \mathcal B( b_2)$ is the ball of radius
\begin{equation} \label{radiusDefEqn}
 r \ :=  \ \min \left( \frac{ m_1+ m_2 - d(\Lambda_1, \Lambda_2)}{2}, \, m_1 \right)
\end{equation}
around the unique vertex lattice $\Gamma\in \mathfrak{A}$ such that
\begin{equation} \label{centLattDistEqn}
 d(\Lambda_1, \Gamma) = m_1-r, \quad  \text{and} \quad d(\Lambda_2, \Gamma) = d(\Lambda_1, \Lambda_2) - (m_1-r) .
\end{equation}
\begin{proof} This follows easily from the fact that the $\mathscr B$ is a tree, and that $\mathcal B(b_1)$ and $\mathcal B(b_2)$ are balls of radius $m_1$ and $m_2$ respectively.
\end{proof}
\end{lemma}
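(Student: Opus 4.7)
The plan is to reduce the claim to a purely combinatorial statement about intersections of two closed metric balls in the tree $\mathscr B$. By \refRemark{bdyTRmk}(ii), $\mathcal B(b_i)$ is exactly the closed ball of radius $m_i$ centred at $\Lambda_i$ in $\mathscr B$, so it suffices to verify the following general fact: if $B_1, B_2$ are two non-disjoint closed balls in a tree, of radii $m_1 \leq m_2$ with centres joined by a unique geodesic $\mathfrak A$, then $B_1 \cap B_2$ is itself a closed ball whose centre lies on $\mathfrak A$ and whose radius is given by formula \eqref{radiusDefEqn}.

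First I would dispose of the trivial case $d(\Lambda_1,\Lambda_2) + m_1 \leq m_2$: here the triangle inequality gives $B_1 \subseteq B_2$, so the intersection is $B_1$, giving $r = m_1$ and $\Gamma = \Lambda_1$, in agreement with \eqref{radiusDefEqn}--\eqref{centLattDistEqn}. Assume henceforth $d(\Lambda_1,\Lambda_2) > m_2 - m_1$, so that $r = (m_1+m_2-d(\Lambda_1,\Lambda_2))/2$; this is a non-negative integer, as non-negativity follows from the non-emptiness assumption (via $d(\Lambda_1,\Lambda_2) \leq m_1+m_2$) and integrality follows from the bipartite structure of $\mathscr B$, since the parity of $d(\Lambda_1,\Lambda_2)$ equals the parity of $m_1+m_2$ (as $\Lambda_i$ has type $0$ or $2$ according to the parity of $m_i$).

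Define $\Gamma$ to be the vertex on $\mathfrak A$ with $d(\Lambda_1,\Gamma) = m_1-r$, so automatically $d(\Lambda_2,\Gamma) = d(\Lambda_1,\Lambda_2) - (m_1-r) = m_2-r$. The inclusion of the ball of radius $r$ around $\Gamma$ into $B_1 \cap B_2$ is then immediate from the triangle inequality. For the reverse inclusion, take $\Lambda \in B_1 \cap B_2$ and let $\Pi \in \mathfrak A$ be the foot of the perpendicular from $\Lambda$ to $\mathfrak A$, which exists and is unique because $\mathscr B$ is a tree; then $d(\Lambda,\Lambda_i) = d(\Lambda,\Pi) + d(\Pi,\Lambda_i)$ and $d(\Lambda,\Gamma) = d(\Lambda,\Pi) + d(\Pi,\Gamma)$. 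Setting $\delta_1 = d(\Pi,\Lambda_1)$, $\delta_2 = d(\Lambda_1,\Lambda_2) - \delta_1$, and $e = d(\Lambda,\Pi)$, the membership of $\Lambda$ in $B_1$ and $B_2$ gives $e+\delta_1 \leq m_1$ and $e+\delta_2 \leq m_2$; splitting into the cases $\delta_1 \geq m_1-r$ and $\delta_1 < m_1-r$ and combining these two constraints will yield $d(\Lambda,\Gamma) \leq r$ in each case.

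The only step that requires any care is the verification of integrality of $r$, which rests on the bipartite/parity bookkeeping in \refDef{centLattDef}; everything else is routine tree combinatorics, which is why the author simply invokes the tree property.
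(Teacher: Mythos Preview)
Your proposal is correct and is precisely the routine tree combinatorics that the paper leaves implicit; the paper's proof consists of the single sentence ``This follows easily from the fact that $\mathscr B$ is a tree, and that $\mathcal B(b_1)$ and $\mathcal B(b_2)$ are balls of radius $m_1$ and $m_2$ respectively,'' and your argument simply unpacks that assertion via the foot-of-perpendicular projection onto $\mathfrak A$. Your observation about the parity of $d(\Lambda_1,\Lambda_2)$ versus $m_1+m_2$ (ensuring $r\in\Z$) is exactly what the paper records immediately after the lemma as equation \eqref{parityEqn}.
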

\begin{remark} 
Recall the \emph{type} of the central lattice $\Lambda$ attached to $b \in \V$ is determined by the parity of $\ord_p(b, b)$, as in  \refDef{centLattDef}. As lattices of differing type are always at an odd distance apart, 
\begin{equation}  \label{parityEqn}
 m_1 \ +  \ m_2 \ \equiv \ d(\Lambda_1, \Lambda_2) \pmod 2,
\end{equation}
 and so $r \in \Z$. $\diamond$
\end{remark}
%
%
%
%
%
We now come to the main theorem of this section:
\begin{theorem} \label{localDegThm}
Suppose $ b_1,  b_2 \in \mathbb V$ are linearly independent vectors, with $m_1:= \ord_p(b_1, b_1)$ and $m_2 := \ord_p(b_2, b_2)$ and central lattices  $\Lambda_1$ and $\Lambda_2$ respectively. Assume that $0 \leq m_1 \leq m_2$. 
\begin{compactenum}[(i)]
\item If $\mathcal{B}( b_1) \cap \mathcal{B}( b_2) = \emptyset$, then $ \la Z( b_1), Z( b_2) \ra = 0$.
\item If $\mathcal{B}( b_1) \subset \mathcal{B}( b_2)$, then
\[  \la Z( b_1), \, Z( b_2) \ra \ = \  \frac{m_1 + m_2 - d(\Lambda_1, \Lambda_2) }{2}  -  \ p\left( \frac{p^{m_1} - 1}{p-1} \right)   \ +\ \la Z(b_1)^h,\, Z(b_2)^h \ra. \]
(See \refProp{hhProp} below for the calculation of the final `h-h' term.)
\item Suppose $\mathcal{B}( b_1) \cap \mathcal{B}( b_2) \neq \emptyset$ but $\mathcal{B}( b_1) \not\subset \mathcal{B}(b_2)$, and let $r$ be as in \eqref{radiusDefEqn}. Then 
\[ \la Z( b_1), Z( b_2) \ra \ = \ r  -  \ p\left( \frac{p^r - 1}{p-1} \right). \]
	\end{compactenum}
\begin{proof}
(i) When $\mathcal{B}( b_1) \cap \mathcal{B}(b_2) = \emptyset$, one sees immediately from \eqref{intExpEqn} that the intersection pairing vanishes.  
\\

\noindent \fbox{Case (ii): $\mathcal{B}(b_1) \subset \mathcal{B}(b_2)$} \\

In this case, we have $\mathcal{B}(b_1) \cap \mathcal{B}(b_2) = \mathcal{B}(b_1)$, which is the ball of radius $m_1$ centred at $\Lambda_1$.  Let
\[ Z(b_1)^{v}  \ =  \  \sum_{\Lambda \in \mathcal{B}(b_1)} \ m(b_1, \Lambda)  \ \mathbb P_{\Lambda} \]
denote the 'vertical' part of $ Z(b_1)$. 
We may express its contribution to the intersection by expanding radially from $\Lambda_1$:
\begin{align*}
 \la Z(b_1)^{v} , \ Z(b_2) \ra \ = \  m(b_1, & \Lambda_1) \cdot \la \mathbb P_{\Lambda_1},  Z(b_2) \ra \ + \ \sum_{\substack{ \Lambda \\ d(\Lambda_1, \Lambda) = 1}} m(b_1, \Lambda) \cdot \la \mathbb P_{\Lambda}, Z(b_2) \ra \\
 &+ \cdots + \sum_{\substack{ \Lambda \\ d(\Lambda_1, \Lambda) = m_1}} m(b_1, \Lambda) \cdot \la \mathbb P_{\Lambda}, Z(b_2) \ra
\end{align*}
Suppose first that $m_1$ is even. Then
\[ m(b_1,\Lambda) \ =  \ \frac{m_1}{2}  \ -\ \lfloor d(\Lambda_1, \Lambda)/2\rfloor \]
and by \eqref{parityEqn}, we have that $d(\Lambda_1, \Lambda_2) \equiv m_2 \pmod 2$. Therefore, applying \eqref{multAltEqn}, 
\begin{align*}
 \la Z(b_1)^{v} , Z(b_2) \ra \ &= \ \frac{m_1}2 \cdot (1) \ + \ (p+1) \frac{m_1}2 \cdot (-p)  \ + \ p (p+1) \left( \frac{m_1}{2} -1 \right) \cdot (1) \\
 & \qquad + \cdots + p^{m_1-2}(p+1) \left( \frac{m_1}{2} - [(m_1-1)/2] \right) (-p) + p^{m_1-1}(p+1) \left( \frac{m_1}{2} - [m_1/2] \right)  \\
 &= \ \frac{m_1}{2} \ + \ (p+1)( -p -p^3 - \cdots -p^{m_1-1} ) \\
 &= \frac{m_1}{2} \ - \ p\left( \frac{p^{m_1}- 1}{p -1} \right)
 \end{align*}
On the other hand, using \eqref{multAltEqn} and \refLemma{P1IntLemma}, 
\[ \la Z(b_1)^{h}, \ Z(b_2)^v \ra \ = \ m(b_2, \Lambda_1) \ = \ \frac{m_2 - d(\Lambda_1,\Lambda_2)}{2}, \]
and so 
\begin{align*}
\la Z(b_1), \ Z(b_2) \ra \ =&  \ \la Z(b_1)^h, Z(b_2)^h \ra \ + \ \la Z(b_1)^h, Z(b_2)^v \ra \ + \  \la Z(b_1)^v, Z(b_2) \ra \\
=& \ \frac{m_1 + m_2 - d(\Lambda_1, \Lambda_2)}{2} \ - \  p \left( \frac{p^{m_1} - 1}{p-1} \right) \ + \  \la Z(b_1)^{h}, Z(b_2)^h \ra  
\end{align*}
as required. 

Next, if $m_1$ is odd, then setting $t_1 = (m_1+1)/2$ gives
\[ m(b_1, \Lambda)\ = \  t_1  \ - \ \left\lfloor \frac{d(\Lambda_1, \Lambda) +1 }{2} \right\rfloor, \qquad \text{for any } \Lambda \in \mathcal B(b_1) . \]
Since $d(\Lambda_1, \Lambda_2)  \not\equiv m_2 \pmod 2$, 
\begin{align*}
\la Z(b_1)^{v} , & Z(b_2) \ra = m(b_1,  \Lambda_1) \cdot \la \mathbb P_{\Lambda_1},  Z(b_2) \ra \ + \ \sum_{\substack{ \Lambda \\ d(\Lambda_1, \Lambda) = 1}} m(b_1, \Lambda) \cdot \la \mathbb P_{\Lambda}, Z(b_2) \ra \\
 &\qquad + \cdots + \sum_{\substack{ \Lambda \\ d(\Lambda_1, \Lambda) = m_1-2}} m(b_1, \Lambda) \cdot \la \mathbb P_{\Lambda}, Z(b_2) \ra \\
&=\  t_1\cdot (-p) + (p+1)(t_1-1) \cdot (1) + p(p+1)(t_1-1) \cdot (-p) \\
& \qquad + \cdots + p^{m_1-3}(p+1)(t_1- [(m_1-1)/2]) + p^{m_1-2}(p+1)(t_1 - [m_1/2])(-p) \\
&= \ t_1 \ - \ (p+1) ( 1+ p^2 + p^4 + \cdots + p^{m_1-1} ) \\
&= \ \frac{m_1+1}{2} \ - \ \left( \frac{p^{m_1+1} - 1}{p - 1} \right).
\end{align*}
By \eqref{multAltEqn}, we have
\[ \la Z(b_1)^{h}, \ Z(b_2)^v \ra \ =  \ m(b_2, \Lambda_1) \ =  \ \frac{m_2 - d(\Lambda_1,\Lambda_2)+ 1}{2}, \]
so that 
\begin{align*}
 \la Z(b_1), \ Z(b_2) \ra \ &=  \ \frac{m_1+m_2 - d(\Lambda_1,\Lambda_2)}{2}\ + \ 1  \ -\   \left( \frac{p^{m_1+1} - 1}{p - 1} \right) \\
 &= \ \frac{m_1+m_2 - d(\Lambda_1,\Lambda_2)}{2}  \ - \ p \left( \frac{p^{m_1} - 1}{p-1} \right) \ + \  \la Z(b_1)^{h}, Z(b_2)^h \ra
\end{align*}
as required. 
\\

\noindent \fbox{Case (iii): $\mathcal{B}(b_1) \not\subset \mathcal{B}(b_2)$} 

Recall that the intersection $\mathcal{B}(b_1) \cap \mathcal{B}(b_2)$ is a ball of radius 
\[ r \ = \ \frac{m_1 + m_2 - d(\Lambda_1, \Lambda_2) }{2} , \]
 centred at a vertex $\Gamma$ along the geodesic $\mathfrak{A}$ connecting $\Lambda_1$ and $\Lambda_2$.
 We start by calculating 
\[ ( Z( b_1)^{v}, Z( b_2) ) \] 
as follows. Consider taking a walk along $\mathfrak{A}$, starting from $\Gamma$ towards $\Lambda_1$. We stop walking either after a distance of $r$ units, or when we arrive at $\Lambda_1$, whichever comes first, i.e., we travel a distance of $\min(r, d(\Lambda_1, \Gamma))$. For each vertex we encounter, say after $k$ steps, we add up the contributions of all the lattices branching off of that vertex \emph{away from} $\mathfrak{A}$, and call that sum $F(k)$. More precisely, let $\Gamma^{(k)}$ denote the vertex in $\lie{A}$ which is a distance of $d(\Lambda_1, \Gamma) - k$ away from $\Lambda_1$ and a distance of $k$ away from $\Gamma$. Then the sum
\[ F(k) = \sum_{\Lambda \in \mathcal{F}(k)} m( b_1, \Lambda) \ \langle \mathbb P_{\Lambda}, Z( b_2) \rangle \]
is taken over the set $\mathcal{F}(k)$ consisting of  lattices $\Lambda \in \mathcal B( b_1) \cap \mathcal B (b_2)$ such that the unique shortest path between $\Lambda$ and $\Lambda_1$ first meets $\mathfrak{A}$ at $\Gamma^{(k)}$. 
The points $\Gamma = \Gamma^{(0)}$ and $\Lambda_1$  (if indeed we end up walking that far, as the vertex $\Lambda_1$ contributes if and only if $ d(\Lambda_1, \Gamma) \leq r $) are different from the rest, because at these points there are $p$ directions leading \emph{away} from the path $\mathfrak{A}$, whereas at all the intermediate vertices there are $p-1$. 

For convenience, set $\mu_k := m(b_1, \Gamma^{(k)})$ and $d_k = d(\Lambda_1, \Gamma^{(k)})$. Our first step is to prove the following calculation for $F(0)$:
\begin{leftbar}[0.85\linewidth]
\begin{flalign*}
& \ \ F(0) \ = \ 
	\begin{cases} 
		\mu_0 - p^2 \left( \frac{p^r - 1}{p^2-1}\right), & \text{if }  r \text{ is even,} \\
		- p \left( \frac{p^{r+1} - 1}{p^2-1}\right), & \text{if } r \text{ is odd}. 
\end{cases} &
\end{flalign*}
\end{leftbar}
To prove this, first suppose $r$ is even. Then $d_0 = m_1 - r$ has the same parity as $m_1$. By  \eqref{multAltEqn},  a vertex $\Lambda$ contributing in $F(0)$ that is $s$  units away from $\Gamma$ appears in $Z(b_1)$ with multiplicity $m(b_1, \Lambda) = \mu_0 - \lfloor s/2 \rfloor$. 
Therefore, 
\begin{align*} 
F(0) \ = \ \mu_0 \cdot \la \mathbb P_{ \Gamma}, Z(b_2) \ra + \sum_{\substack{ \Lambda \in \mathcal{F}(0)  \\ d( \Lambda, \Gamma) = 1 }}  & \mu_0 \cdot \la \mathbb P_{\Lambda}, Z(b_2) \ra +  \sum_{\substack{ \Lambda \in \mathcal{F}(0)  \\ d( \Lambda, \Gamma) = 2 }} (\mu_0 -1 ) \cdot \la \mathbb P_{\Lambda}, Z(b_2) \ra  +  \\
&+ \dots +  \sum_{\substack{ \Lambda \in \mathcal{F}(0)  \\ d( \Lambda, \Gamma) = r }} (\mu_0 -(r/2) ) \cdot \la \mathbb P_{\Lambda}, Z(b_2) \ra.
\end{align*}
From \eqref{centLattDistEqn} and \eqref{parityEqn}, it follows that $d(\Lambda_2, \Gamma^{(0)}) \equiv m_2 \pmod 2$, and so applying \refLemma{P1IntLemma}, 
\begin{align*}
F(0) \ &= \ \mu_0\cdot(1) + \sum_{\substack{ \Lambda \in \mathcal{F}(0)  \\ d( \Lambda, \Gamma) = 1 }}   \mu_0 \cdot (-p) +  \sum_{\substack{ \Lambda \in \mathcal{F}(0)  \\ d( \Lambda, \Gamma) = 2 }} (\mu_0 -1 )\cdot(1)  + \dots +  \sum_{\substack{ \Lambda \in \mathcal{F}(0)  \\ d( \Lambda, \Gamma) = r }} (\mu_0 -(r/2) ) \cdot (1) \\ 
&= \  \mu_0 + p \cdot \mu_0 (-p)  + p^2 \cdot(\mu_0 - 1) + p^3\cdot (\mu_0 - 1) (-p) + \dots + p^r \cdot (\mu_0 - r/2)  \\
&= \ \mu_0 - p^2 - p^4 \dots - p^r \\
&= \ \mu_0 - p^2 \left( \frac{p^r - 1}{p^2 - 1} \right),
\end{align*}
as required. 

Similarly, when $r$ is odd,  
\begin{align*} 
F(0) \ &= \ \mu_0 \cdot (-p) \ + \ \sum_{\substack{ \Lambda \in \mathcal{F}(0)  \\ d( \Lambda, \Gamma) = 1 }}(\mu_0 - 1) \cdot (1) + \sum_{\substack{ \Lambda \in \mathcal{F}(0)  \\ d( \Lambda, \Gamma) = 2 }} (\mu_0 -1 ) (-p) +  \cdots \sum_{\substack{ \Lambda \in \mathcal{F}(0)  \\ d( \Lambda, \Gamma) =r }} (\mu_0 - (r+1)/2) \cdot (1) \\
&= \ \mu_0 \cdot (-p)  \ +\   p (\mu_0 - 1)  \ + \  p^2(\mu_0 - 1) \cdot (-p) \ + \ p^3(\mu_0 - 2) \ + \cdots + \ p^{r}(\mu_0 - (r+1)/2) \\
&= -p -p^3 - \cdots p^r \\
&= -p \left( \frac{p^{r+1} - 1}{p^2 - 1} \right), 
\end{align*}
as required. 

Next, when $0 < k  <d_0 = d(\Lambda_1, \Gamma^{(0)})$ and $k \leq r$, we have the following calculation:
\begin{leftbar}[0.85\linewidth]
\begin{flalign*}
& \ \ F(k) \ = \ 
	\begin{cases} 
		\mu_k - (p-1)(p) \left( \frac{p^{r-k} - 1}{p^2-1}\right), & \text{if }  r  \equiv k \pmod 2 \\
		- \mu_k - (p-1) \left( \frac{p^{r- k +1 } - 1}{p^2 - 1} \right) , & \text{if }  r \not\equiv k \pmod 2
	\end{cases} &
\end{flalign*}
\end{leftbar}
To prove this claim, we first consider the case $r \equiv k \pmod 2$. Then 
\[ d(\Lambda_1, \Gamma^{(k)}) \ = \   d(\Lambda_1, \Gamma^{(0)}) - k \  = \ m_1 - r - k  \ \equiv \ m_1 \pmod 2, \]
and 
\[ d(\Lambda_2, \Gamma^{(k)}) \ = \   d(\Lambda_2, \Gamma^{(0)}) + k \ \equiv \  m_2 - r + k  \ \equiv \ m_2 \pmod 2 \]
as well.  Hence, for $\mu_k = m(b_1, \Gamma^{(k)})$, we have
\begin{align*}
F(k) \ &= \ \mu_k\cdot(1) + \sum_{\substack{ \Lambda \in \mathcal{F}(k)  \\ d( \Lambda, \Gamma) = 1 }}   \mu_k \cdot (-p) +  \sum_{\substack{ \Lambda \in \mathcal{F}(k)  \\ d( \Lambda, \Gamma) = 2 }} (\mu_k -1 )\cdot(1)  + \dots +  \sum_{\substack{ \Lambda \in \mathcal{F}(k)  \\ d( \Lambda, \Gamma) = r - k }} \left(\mu_k - \frac{r-k}{2} \right) \cdot (1) \\
&= \mu_k    +  (p-1)  \mu_k \cdot (-p)  + p(p-1) (\mu_k - 1)   + p^2(p-1) (\mu_k - 1)\cdot (-p)  +  \cdots  +  p^{r-k-1}(p-1) \left(\mu_k - \frac{r-k}{2} \right) \cdot (1)  \\
&= \mu_k  \ + \  (p-1)(-p - p^3 - \cdots - p^{r-k-1}) \\
&= \mu_k \ - \ p (p-1) \frac{p^{r-k} - 1}{p^2 - 1} .
\end{align*}
The case when $r \not\equiv k \pmod 2$ follows from similar considerations, and we omit the proof. 

Finally, we consider the case $k = d_0 = d(\Lambda_1, \Gamma^{(0)})$, which only arises when $r \geq d_0$. The calculation is almost identical to the case $k = 0$, and so we omit it; the result is:
\begin{leftbar}[0.85\linewidth]
\begin{flalign*}
& \ \ F(d_0) \ = \ 
	\begin{cases} 
		\mu_{d_0} - p^2 \left( \frac{p^{r- d_0} - 1}{p^2-1}\right), & \text{if }  r \equiv d_0 \pmod 2 \\
		- p \left( \frac{p^{r-d_0+1} - 1}{p^2-1}\right), & \text{if } r \not\equiv d_0 \pmod 2 
\end{cases} &
\end{flalign*}
\end{leftbar}

Now, we consider the sum over all  the contributions $F(k)$. Observe that if $k > 0$ and $k \equiv r \pmod 2$, then
\begin{equation} \label{FkPairSum}
F(k) \ + \ F(k+1) \ = \  \mu_k \  - \  \mu_{k+1} \   - \  \left( p^{r-k} - 1 \right) 
\end{equation}

 We first assume that $r < d_0$ and  $r$ is even. The first assumption implies that $\Lambda_1 \notin \mathcal B(b_1) \cap \mathcal B(b_2)$, and so $\la Z(b_1)^h, Z(b_2) \ra  = 0$ in this case. Thus 
\begin{align*}
\la Z(b_1), \  Z(b_2) \ra  \ &=  \ \la Z(b_1)^{v}, Z(b_2) \ra \   = \  \sum_{k=0}^{r} F(k)  \\
&=  \ F(0) \ + \  F(1) \ +  \ \left( \sum_{n=1}^{r/2 - 1} \ F(2n) \ +  \ F(2n+1)  \right) \ +  \ F(r) \\
&=  \ \sum_{k=0}^r (-1)^k \mu_k  \  - \ (p^2 + p - 1) \  \left( \frac{p^r -1}{p^2 - 1} \right)   - \sum_{n=1}^{r/2}(p^{r-2n} - 1) \\
&=\  \sum_{k=0}^r (-1)^k \mu_k - (p^2 + p) \left( \frac{p^r -1}{p^2 - 1} \right) + \frac{r}2.
\end{align*}
Since $r$ is even, we have $d_0 \equiv m_1 \pmod 2$, and so $\mu_k = \mu_0 + \lfloor (k+1)/2\rfloor$. Thus
\begin{align*} \sum_{k=0}^r (-1)^k \mu_k \  = \ \mu_0 - (\mu_0 + 1) + (\mu_0 + 1)  -  (\mu_0 + 2) \dots + (\mu_0 + r/2)  - (\mu_0  + r/2) = \mu_0
\end{align*}
and recalling \eqref{centLattDistEqn} and \eqref{multAltEqn}, we have $\mu_0 = r/2$. 
Therefore
\[ \la Z(b_1), Z(b_2) \ra \ = \ r - p \left( \frac{p^r - 1}{p-1} \right),  \]
which proves the proposition for $r < d_0$ with $r$ even. A similar calculation gives the same result when $r<d_0$ and $r$ is odd. 

Next, we consider the case that $r \geq d_0$. Then we have
\[ \la Z(b_1), Z(b_2) \ra \ =  \ \la Z(b_1)^{h}, Z(b_2) \ra \ + \ \la Z(b_1)^{v} , Z(b_2) \ra.\]
The proof proceeds by a further  case-by-case analysis, depending on the parity of  $r$ and $d_0$. Suppose that both are even. Then
\begin{align*}  \la Z(b_1)^{v} , Z(b_2) \ra &= F(0) + F(1) + \sum_{n= 1}^{d_0/2 - 1} F(2n) + F(2n+1)  + F(d_0) \\
 &= \mu_0 - p^2 \left(\frac{p^r - 1}{p^2-1} \right) - \mu_1 - (p-1) \left(\frac{p^r - 1}{p^2-1} \right) + \sum_{n=1}^{d_0/2 - 1} \mu_{2n} - \mu_{2n + 1 } - (p^{r - 2n} - 1)  \\
 & \qquad \qquad + \mu_{d_0} - p^2\left( \frac{p^{r - d_0} -1 }{p^2 - 1} \right)  \\
 &= \frac{d_0}{2} - 1 + \sum_{k = 0}^{d_0} (-1)^k \mu_k - (p^2 + p - 1) \left(\frac{p^r - 1}{p^2-1} \right) - \frac{p^r - p^{r - d_0 + 2}}{p^2 - 1} - p^2 \left(\frac{p^{r-d_0} - 1}{p^2-1} \right).
\end{align*}
Using the fact that in this case $\sum_k (-1)^k \mu_k  = \mu_0 = r/2$ and simplifying the above expression, we obtain
\[ \la Z(b_1)^{v} , Z(b_2) \ra = \frac{d_0}{2} + \frac{r}{2} - p \left( \frac{p^r - 1}{p^2 -1 } \right) = \frac{m_1}{2} - p \left( \frac{p^r - 1}{p^2 -1 } \right) .\]
On the other hand, we note that 
\[ \la Z(b_1)^{h} , Z(b_2) \ra = m(b_2, \Lambda_0). \]
Combining \eqref{centLattDistEqn} with the assumptions that $r$ and $d_0$ are even, we have $d(\Lambda_1, \Lambda_2) \equiv m_2 \pmod 2$, and so by \eqref{multAltEqn},
\[ m(b_2, \Lambda_1) = \frac{m_2 - d(\Lambda_1, \Lambda_2)}{2}  =  r - \frac{m_1}{2}. \]
Therefore
\[ \la Z(b_1) , Z(b_2) \ra = r - p \left( \frac{p^r - 1}{p - 1} \right) \] 
as desired, in the case $r \equiv d_0 \equiv 0 \pmod 2$. Again, the remaining cases are entirely analogous, and so we omit the proofs. 
\\

\end{proof}
\end{theorem}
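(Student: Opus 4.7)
The plan is to combine the decomposition $Z(b) = Z(b)^h + Z(b)^v$ of \refThm{San1Thm} with the intersection computations of \refLemma{P1IntLemma} and the combinatorial description of $\mathcal B(b_1) \cap \mathcal B(b_2)$ from \refLemma{TIntersectLemma}. Everything flows from the expansion \eqref{intExpEqn}, which splits $\la Z(b_1), Z(b_2) \ra$ into a horizontal/horizontal piece, a mixed piece $\la Z(b_1)^h, Z(b_2)^v \ra$, and a sum over $\Lambda \in \mathcal B(b_1) \cap \mathcal B(b_2)$. Case (i) is then immediate: if $\mathcal B(b_1) \cap \mathcal B(b_2) = \emptyset$, then $\Lambda_1 \notin \mathcal B(b_2)$, and all three pieces vanish by \refLemma{P1IntLemma}.

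For cases (ii) and (iii), I would parameterize $\mathcal B(b_1) \cap \mathcal B(b_2)$ by traversing the geodesic $\mathfrak A$ joining $\Lambda_1$ and $\Lambda_2$. Walk from $\Gamma$ toward $\Lambda_1$ for $\min(r, d(\Lambda_1, \Gamma))$ steps; at each waypoint $\Gamma^{(k)}$, let $\mathcal F(k)$ denote the sub-tree of lattices in the intersection whose unique path to $\Lambda_1$ first meets $\mathfrak A$ at $\Gamma^{(k)}$, and set $F(k) = \sum_{\Lambda \in \mathcal F(k)} m(b_1, \Lambda) \la \mathbb P_\Lambda, Z(b_2) \ra$. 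Using \refLemma{P1IntLemma} and \eqref{multAltEqn}, each $F(k)$ becomes a finite $p$-adic geometric series: the sign $\la \mathbb P_\Lambda, Z(b_2) \ra \in \{1, -p\}$ alternates with the parity of $d(\Lambda, \Lambda_2)$, while $m(b_1, \Lambda)$ depends only on $d(\Lambda, \Lambda_1)$ and drops by one every two steps away from $\Lambda_1$.

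The combinatorial subtlety is that at intermediate waypoints of $\mathfrak A$ there are $p-1$ branches leaving $\mathfrak A$, whereas at $\Gamma$ and at $\Lambda_1$ (if the walk reaches it) there are $p$. Working out each $F(k)$ yields a closed formula involving $\mu_k := m(b_1, \Gamma^{(k)})$ and a partial geometric sum in $p^2$. I would then expect a telescoping identity of the form $F(k) + F(k+1) = \mu_k - \mu_{k+1} - (p^{r-k} - 1)$ for consecutive pairs of waypoints, causing all the $\mu_k$-terms to cancel and leaving a single geometric sum $p(p^r - 1)/(p-1)$ together with an integer correction. Adding the mixed term $\la Z(b_1)^h, Z(b_2)^v \ra = m(b_2, \Lambda_1)$ from \refLemma{P1IntLemma} then produces the claimed answer; in case (ii), $\Lambda_1 \in \mathcal B(b_2)$ supplies the additional $(m_2 - d(\Lambda_1, \Lambda_2))/2$, while the h-h contribution is separately identified and left for \refProp{hhProp}.

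The main obstacle is the parity bookkeeping. The parities of $m_1$, $d_0 := d(\Lambda_1, \Gamma)$, and $r$ are linked by the constraint \eqref{parityEqn}, and each combination produces slightly different geometric series with different starting signs and different treatment of the endpoint contributions $F(0)$ and $F(d_0)$. I would expect to work through four or so sub-cases — at minimum, $r < d_0$ vs.\ $r \geq d_0$ and $r$ even vs.\ $r$ odd — verifying the telescoping separately in each and confirming that the final formula is uniform. The payoff is that the answer depends only on $r$ (and on $m_1$ in case (ii)), which is a non-trivial cancellation hidden inside the combinatorics of the tree.
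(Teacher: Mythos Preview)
Your proposal is correct and follows essentially the same route as the paper's proof: the decomposition \eqref{intExpEqn}, the walk along $\mathfrak A$ from $\Gamma$ toward $\Lambda_1$, the branch-sums $F(k)$, the telescoping identity \eqref{FkPairSum}, and the split into $r < d_0$ versus $r \geq d_0$ with parity sub-cases all appear verbatim in the paper. The only presentational difference is that the paper handles case~(ii) by a direct radial expansion from $\Lambda_1$ (split by the parity of $m_1$) rather than as the degenerate $\Gamma = \Lambda_1$, $d_0 = 0$ instance of the walk framework; since in case~(ii) the walk has zero steps and $F(0)$ is exactly that radial sum, this is the same computation in different clothing.
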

%
%
 
%
We  turn to the `horizontal-horizontal' terms appearing in \refThm{localDegThm}\emph{(ii)}. As usual, let $b_1, b_2 \in \mathbb V$ be linearly independent, with
\[ m_1 \ = \ \ord_p (b_1, b_1) \ \leq \ m_2 :=  \ord_p (b_2, b_2) \] 
and central lattices $\Lambda_1$ and $\Lambda_2$ respectively. For $i = 1,2$, set 
\[ t_i := \lfloor \frac{m_i+1}{2} \rfloor,  \qquad \text{and} \qquad \beta_i \ := \ p^{-t_i} b_i \]
so that $\beta_i \in \Lambda_i \setminus p \Lambda_i$ by the property characterizing central lattices.

\begin{proposition} \label{hhProp}
With notation as in the previous paragraph,
\[ \la Z(b_1)^h, Z(b_2)^h \ra \ = \ \begin{cases} 0 , & \text{ if } \Lambda_1 \neq \Lambda_2 \\ \ord_p (\beta_2, \beta_1'), & \text{ if } \Lambda_1 = \Lambda_2 \end{cases} \]
where $\beta_1' \in \Lambda_1$ is any vector such that $\left\{ \beta_1, \beta_1' \right\}$ form an orthogonal basis for $\Lambda_1$. 

\begin{proof} Recall from \refThm{San1Thm} that $Z(b_i)^h$ meets the special fibre at a single non-superspecial point in the component $\mathbb P_{\Lambda_i}$. Hence if $\Lambda_1 \neq \Lambda_2$, the pairing clearly vanishes. 

Thus we may assume  $\Lambda_1 = \Lambda_2 = \Lambda$, and we suppose further that $\Lambda$ is self-dual, so that $m_1 = 2t_1$ and $m_2 = 2t_2$ are even. We may fix a basis $\{ e, f \}$ for $\Lambda$ with respect to which the Hermitian form is 
\[ h \ \sim \ \begin{pmatrix} & \delta \\ -\delta & \end{pmatrix} \]
where $\delta \in o_{k,p}^{\times}$ is a generator for $k_p  /  \Q_p$ with $\delta' = - \delta$. Writing 
\[ \beta_1 = p^{-t_1} b_1  = r_1 e + s_1 f, \qquad \beta_2 = p^{-t_2} b_2  = r_2 e + s_2 f  \]
we note that 
\[  (\beta_i, \beta_i )  \ = \ \delta \left( r_i s_i' - r_i' s_i \right) \ \in \ \Z_p^{\times}   \]
and so $r_i$, $s_i$, and $r_i s_i' - r_i' s_i $ are all units in $o_{k,p}$. 

Set $\beta_1' = r_1' e + s_1' f$, so that $\{ \beta_1, \beta_1' \}$ form an orthogonal basis for $\Lambda$. Then we may write
\[ \beta_2 \ = \ \mu \, \beta_1 \ + \  \epsilon \, \beta_1', \qquad \text{for some } \mu,  \epsilon \in o_k, \ \  \epsilon \neq 0.  \]
The local equations for these cycles are described explicitly in \cite[\S 3]{San1}; in particular, by Proposition 3.7 there,
\begin{align*}
Z(b_1)^{h}(\F) = Z(b_2)^{h}(\F) \ &\iff  \ \beta_1  \text{ and } \beta_2 \text{ are collinear mod } p \Lambda \\
&\iff \ \mu \in o_k^{\times} \text{ and } \epsilon \in (p).
\end{align*}  
If $\epsilon \in o_{k,p}^{\times}$, then $Z(b_1)^h$ and $Z(b_2)^h$ do not intersect, and so
\[ \la Z(b_1)^h, Z(b_2)^h \ra \ =  \ 0 \ = \ \ord_p \epsilon \ = \ \ord_p (\beta_2, \beta_1') \] 
as required.

Finally, we consider the situation $Z(b_1)^h(\F) = \{x \} =  Z(b_2)^h(\F)$. As described in \cite[Proposition 3.10]{San1}, the point $x$ has a formal affine neighbourhood
\begin{equation}
 \Spf \, W[T, (T^p - T)^{-1}]^{\vee}  \ =: \ \Spf \, R
\end{equation}
such that   the two cycles $Z(b_1)^h$ and $Z(b_2)^h$ are given by 
\[ \Spf \, R / (r_1 T - s_1) \qquad \text{and} \qquad \Spf \, R / (r_2 T - s_2)  \]
respectively. As $\epsilon \in o_{k,p}^{\times}$, we may write
\[ r_2 T - s_2 \ = \ \mu \cdot \left( r_1 T \ -\  s_1  \ + \  \epsilon \mu^{-1} ( r_1' T - s_1') \right),  \]
and since the cycles intersect properly, it follows immediately that
\begin{align*}
 \chi( \mathcal O_{Z(b_1)^h} \otimes^{\mathbb L }  \mathcal O_{Z(b_2)^h} ) \ =& \ \length \left( R / (r_1 T - s_1 ) \otimes_R R / (r_2T - s_2) \right)_x \\  
 =& \ \length W / (\epsilon \mu^{-1} (r_1' s_1 - r_1 s_1' ) ) \\
 =&\  \ord_p (\epsilon)
 \end{align*}
 as required. 
 
When $\Lambda$ is a type 2 vertex lattice, we may fix a basis $\{ e, f \}$ such that $h \sim p^{-1}( \begin{smallmatrix} & \delta \\ -\delta & \end{smallmatrix})$. Writing 
\[\beta_i \ = \ r_i \,e \ + \ s_i \, f \]
as before, Proposition 3.11 of \cite{San1} tells us that the local equation for $Z(b_i)^{h} $ is given by
\[ s_i' \, T\ +  \ r_i' \ = \ 0. \]
for $i=1,2$. The result follows from similar considerations to the previous case. 
\end{proof}
\end{proposition}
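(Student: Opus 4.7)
The plan is to split into the two cases of the proposition. For the first case, I would appeal directly to \refThm{San1Thm}: each horizontal part $Z(b_i)^h$ is isomorphic to $\Spf(W)$ and meets the special fibre at a single non-superspecial point lying on the component $\mathbb{P}_{\Lambda_i}$. If $\Lambda_1 \neq \Lambda_2$, these two points live on distinct irreducible components of $\mathcal{D}_{red}$; since two distinct components only meet (if at all) at superspecial points, the supports of $Z(b_1)^h$ and $Z(b_2)^h$ are disjoint, and the pairing vanishes.

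The substantive case is $\Lambda_1 = \Lambda_2 = \Lambda$, which I would handle by invoking the explicit local equations for $Z(b_i)^h$ established in \cite{San1}. Treating the self-dual (type 0) case first, fix a basis $\{e,f\}$ of $\Lambda$ diagonalizing the Hermitian form as $\bigl(\begin{smallmatrix} & \delta \\ -\delta & \end{smallmatrix}\bigr)$ with $\delta \in o_{k,p}^\times$ satisfying $\delta' = -\delta$, and write $\beta_i = r_i e + s_i f$. The hypothesis $(\beta_i,\beta_i) \in \Z_p^\times$ forces $r_i, s_i$ and $r_i s_i' - r_i' s_i$ to lie in $o_{k,p}^\times$. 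Taking $\beta_1' = r_1' e + s_1' f$, one checks that $\{\beta_1, \beta_1'\}$ is an orthogonal basis of $\Lambda$ and expands $\beta_2 = \mu \beta_1 + \epsilon \beta_1'$ for unique $\mu, \epsilon \in o_{k,p}$. Since $(\beta_1',\beta_1')$ is a unit, we have $\ord_p(\beta_2,\beta_1') = \ord_p\epsilon$, so the content of the claim is to show $\langle Z(b_1)^h, Z(b_2)^h\rangle = \ord_p\epsilon$.

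For this I would use the description in \cite[\S 3]{San1}: the reductions of the $Z(b_i)^h$ to $\F$ coincide if and only if $\beta_1$ and $\beta_2$ are collinear modulo $p\Lambda$, which translates to $\mu \in o_{k,p}^\times$ and $\epsilon \in (p)$. When $\epsilon \in o_{k,p}^\times$ the supports are disjoint, so both sides are $0$; when $\epsilon \in (p)$, the common point $x$ has a formal affine neighbourhood $\Spf R$ with $R = W[T,(T^p-T)^{-1}]^{\wedge}$, in which the two cycles are cut out by $r_1 T - s_1$ and $r_2 T - s_2$ respectively. A unit rescaling rewrites the second equation as $\mu(r_1 T - s_1) + \epsilon(r_1' T - s_1')$, and since the intersection is proper, the local Serre multiplicity reduces (after killing $r_1 T - s_1$ and simplifying using that $r_1$ and the discriminant $r_1 s_1' - r_1' s_1$ are units) to the length of $W/(\epsilon(r_1's_1-r_1s_1'))$, namely $\ord_p\epsilon$. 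The case where $\Lambda$ is of type 2 is structurally identical: \cite[Prop.\ 3.11]{San1} gives the equation $s_i' T + r_i' = 0$ after the appropriate rescaling of the Hermitian form by $p^{-1}$, and the same unit manipulation and length count apply.

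The only mildly delicate point is bookkeeping the units to confirm that the cofactor appearing in the length computation is a unit multiple of $\epsilon$ and that $(\beta_2, \beta_1')$ equals $\epsilon$ times a unit; once this is in place, the result follows from a one-line length calculation in the discrete valuation ring $W$. I expect no serious obstacle beyond correctly transcribing the local equations from \cite{San1} in each of the two lattice-type cases.
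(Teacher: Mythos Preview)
Your proposal is correct and follows essentially the same route as the paper's proof: the same appeal to \refThm{San1Thm} to dispose of the case $\Lambda_1\neq\Lambda_2$, the same choice of basis and of $\beta_1'=r_1'e+s_1'f$, the same invocation of \cite[Propositions 3.7, 3.10, 3.11]{San1} for the collinearity criterion and the explicit local equations $r_iT-s_i$ (resp.\ $s_i'T+r_i'$), and the same length computation in $W$ yielding $\ord_p\epsilon$. There is nothing to add.
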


We have now completed the computation of the pairing $\la Z(b_1), Z(b_2) \ra $. The next step is to show that this pairing depends only the matrix of inner products, as in the following lemma. 

\begin{lemma} \label{TdiagLemma}
Suppose $ \mathbf b = [b_1, b_2] \in \mathbb V$ is a linearly independent pair of vectors, and let 
\[ T\ = \ (\mathbf b, \mathbf b) \ = \ \begin{pmatrix} (b_1, b_1) & (b_1, b_2) \\ (b_2, b_1) & (b_2, b_2) \end{pmatrix}\] 
denote the matrix of inner products. Set $m_i = \ord_p (b_i, b_i)$, and let $\Lambda_1, \Lambda_2$ denote the central lattices of $b_1$ and $b_2$ respectively with $d := d(\Lambda_1, \Lambda_2)$. Replacing $T$ by a $GL_2(o_{k,p})$-conjugate if necessary, we may further assume $m_1 \leq m_2$. 
Then
\begin{compactenum}[(i)]
\item $T \in \Herm_2(o_{k,p})$ if and only if $ \mathcal{B}(b_1) \cap \mathcal{B}(b_2) \neq \emptyset$. 
\item If $\mathcal{B}(b_1) \subset \mathcal{B}(b_2)$ and $\Lambda_1\neq \Lambda_2$, then $T$ is $GL_2(o_{k,p})$-conjugate to the matrix $\diag(p^a, p^b)$ 
with 
\[ a  \ = \ m_2  \ - \ d, \qquad b \ = \ m_1. \]
\item If $\mathcal{B}(b_1) \not\subset \mathcal{B}(b_2)$ and $\mathcal B(b_1) \cap \mathcal B(b_2) \neq \emptyset$,  then $T \sim {\diag}(p^r, p^r)$ where
\[ r =  \frac{m_1+ m_2 - d}2. \] 
\item If $\Lambda_1 = \Lambda_2$, then $T \sim {\diag}(p^a, p^b) $ where 
\[ a  = m_2 + 2\ \ord_p (\beta_2, \beta'_1), \qquad b = m_1 ; \]
here $\beta_2$ and $\beta'_1$ are as in \refProp{hhProp}. 
\end{compactenum}
\begin{proof} If $m_1 < 0$, then clearly $T \notin \Herm_2(o_{k,p})$ and by definition  $\mathcal B(b_1) = \emptyset$, so the lemma holds trivially in this case. Hence we assume that $0 \leq m_1 \leq m_2$. 

\vspace{1em}
\noindent\fbox{Case 1: $\Lambda_1 \neq \Lambda_2$}
\vspace{1em}

 First suppose that  $\Lambda_1$ is self-dual, and so $m_1$ is even. We can assume that $\Lambda_1$ and $\Lambda_2$ lie on the `standard lattice chain'. In other words, there exists a basis $\{ e_1, f_1 \}$ for $\Lambda_1$ such that
\begin{compactenum}[(a)]
\item with respect to this basis,  $ h \sim (\begin{smallmatrix} & \delta \\ -\delta & \end{smallmatrix})$, where $\delta \in o_{k,p}^{\times}$ with $\delta' = -\delta$, and
\item  the lattice $\Lambda_2$ has basis $\left\{ e_2, f_2 \right\}$ where
\[ (e_2, f_2) \ = \ \begin{cases} (p^{-k} e_1, \ p^k f_1), & \text{if } d = 2k \text{ is even} \\ (p^{-k-1} e_1, \ p^{k} f_1), & \text{if } d = 2k + 1\text{ is odd.} \end{cases} \]
\end{compactenum}
Set $ t_1 = \frac{m_1}{2}$ and $ t_2 = \left\lfloor \frac{m_2 + 1}{2} \right\rfloor $. Then  $T$ has the form
\begin{equation} \label{TFormEqn}
 T \ = \ \begin{pmatrix} p^{m_1} \cdot (\mathrm{unit}) & p^{s} \cdot (\mathrm{unit}) \\  p^{s} \cdot (\mathrm{unit}) & p^{m_2} \cdot (\mathrm{unit}) \end{pmatrix} 
\end{equation}
where 
\[ s := \begin{cases} t_1 + t_2 - k & \text{if } d = 2k > 0 \text{ is even} \\ t_1 + t_2 - k  - 1 & \text{if } d = 2k+1 \text{ is odd}. \end{cases} \]
Since $m_1$ is even, we have $m_2 \equiv d \pmod 2$ and so in fact
\[ s = \frac{m_1 + m_2 - d}{2} \] 
in both cases. Hence
\[ T \in \mathrm{ Herm}_2(o_k) \ \iff \ d \leq m_1 + m_2 \ \iff \ \mathcal B (b_1) \cap \mathcal B(b_2) \neq \emptyset , \]
which proves (i), at least when $\Lambda_1 \neq \Lambda_2$ (on the other hand if $\Lambda_1 = \Lambda_2$, then (i) is trivial). 

Note that in general if $T \sim {\diag}(p^a, p^b)$ with $a \geq b \geq 0$, the numbers $a$ and $b$ are characterized by the fact that 
\begin{compactitem}
\item $b$ is the lowest valuation appearing among the entries of $T$; and
\item $a+b  = \ord_p \det T$.
\end{compactitem}
We also observe that 
\[ \mathcal B( b_1 ) \subset \mathcal B(b_2) \iff m_1 \leq m_2 - d \iff m_1 \leq s. \]
Thus  statements \emph{(ii)} and \emph{(iii)} of the lemma follow from a moment's contemplation of   \eqref{TFormEqn}, and begging the reader's forbearance for yet another instance of this refrain, the proof when $\Lambda_1$ is a type 2 lattice is entirely analogous.

\vspace{1em}
\noindent\fbox{Case 2: $\Lambda_1 = \Lambda_2 $}
\vspace{1em}

Abbreviate $\Lambda_1 = \Lambda_2 = \Lambda$. Recall that we defined $t_i = \lfloor \frac{m_i + 1}{2} \rfloor$ and $\beta_i = p^{-t_i} b_i$ as in \refProp{hhProp}, so that $\beta_i \in \Lambda - p \Lambda$ and $\ord_p (\beta_i, \beta_i)$ is equal to $0$ or $-1$, depending on whether or not $m_1$ and $m_2$ are both even or both odd. 

We may choose an element $\beta_1' \in \Lambda - p \Lambda$ such that $(\beta_1, \beta_1') = 0$ and $(\beta_1', \beta_1') =  (\beta_1, \beta_1)$ with $\{ \beta_1, \beta_1'\}$ forming a basis for $\Lambda$. Write
\[ \beta_2 = \mu \cdot \beta_1 + \epsilon \cdot \beta_1' \]
where $\mu, \epsilon \in o_{k,p}$ with at least one of them being a unit. 
Then 
\[ T \ = \ (\beta_1, \beta_1) \cdot \begin{pmatrix} p^{2t_1} & p^{t_1 + t_2} \mu' \\ p^{t_1 + t_2} \mu & p^{2t_2}( n(\mu) + n(\epsilon) )  \end{pmatrix} \]
The smallest valuation appearing is 
\[ 2 t_1 \ + \  \ord_p (\beta_1, \beta_1) \ = \ m_1, \]
and the determinant of $T$ has valuation
\[ \ord_p \det (T) \ = \ 2 \ord_p (\beta_1, \beta_1)  + 2 (t_1 + t_2) + \ord_p n(\epsilon) \ = \ m_1 + m_2 + 2 \ \ord_p (\beta_2, \beta_1') .\]
This proves \emph{(iv)}. 
\end{proof}

\end{lemma}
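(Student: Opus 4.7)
The plan is to put the central lattices $\Lambda_1, \Lambda_2$ into standard position adapted to the Bruhat-Tits tree, compute the Gram matrix $T$ in an explicit basis, and read off its elementary divisors. The basic tool throughout is the following classification: for a rank-$2$ Hermitian matrix over the unramified extension $o_{k,p}$, one has $T \sim \diag(p^a, p^b)$ with $a \geq b \geq 0$ if and only if $b = \min_{i,j} \ord_p T_{ij}$ and $a+b = \ord_p \det T$; in particular $T \in \Herm_2(o_{k,p})$ iff the minimum entry valuation is $\geq 0$.

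In the case $\Lambda_1 \neq \Lambda_2$, first I would exploit the splitness of $\V$ to choose a basis $\{e, f\}$ of $\Lambda_1$ in which the Hermitian form on $\Lambda_1 \otimes \Q_p$ is hyperbolic, $(\sm{& \delta \\ -\delta &})$ with $\delta \in o_{k,p}^{\times}$, $\delta' = -\delta$, and pick $\Lambda_2$ with a standard-chain basis $\{p^{-k}e, p^k f\}$ (or its type-shifted analogue when $d = d(\Lambda_1, \Lambda_2)$ is odd). Writing $\beta_i = p^{-t_i} b_i \in \Lambda_i \setminus p\Lambda_i$ in the chosen basis, both coordinates of each $\beta_i$ must be units, for otherwise $\beta_i \pmod p$ lies on an isotropic line of the hyperbolic form, which forces $\ord_p(\beta_i, \beta_i) \geq 1$ and contradicts the defining property of the central lattice. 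A direct calculation then yields
\[
T \;=\; \begin{pmatrix} p^{m_1} u_{11} & p^{s} u_{12} \\ p^{s} u_{21} & p^{m_2} u_{22} \end{pmatrix}, \qquad u_{ij} \in o_{k,p}^{\times},
\]
with $s = (m_1 + m_2 - d)/2$. Parts \emph{(i)}--\emph{(iii)} now follow by inspection: integrality is equivalent to $s \geq 0$, i.e., $d \leq m_1 + m_2$, which is exactly $\mathcal{B}(b_1) \cap \mathcal{B}(b_2) \neq \emptyset$; $\mathcal{B}(b_1) \subset \mathcal{B}(b_2)$ is equivalent to $m_1 \leq s$, which determines whether the diagonal or off-diagonal entries achieve the minimum valuation; and $\ord_p \det T = 2s$ in both sub-cases (the cross term dominates since $d > 0$), giving the stated elementary divisors.

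In the case $\Lambda_1 = \Lambda_2 = \Lambda$, I would complete $\beta_1$ to an orthogonal basis $\{\beta_1, \beta_1'\}$ of $\Lambda$ (the split Hermitian plane over $o_{k,p}$ admits such a diagonalization with $(\beta_1', \beta_1') = (\beta_1, \beta_1)$, since $-1$ is a norm from the unramified extension $k_p/\Q_p$), write $\beta_2 = \mu \beta_1 + \epsilon \beta_1'$ with $\epsilon \neq 0$ by linear independence, and expand to get the displayed Gram matrix directly. The minimum entry valuation is $m_1$ (using $t_1 \leq t_2$ together with the observation that at least one of $\mu, \epsilon$ is a unit, so the off-diagonal valuation $t_1 + t_2 + \ord_p(\beta_1, \beta_1) + \ord_p\mu \geq m_1$), and the determinant works out to $(\beta_1,\beta_1)^2 \, p^{2t_1+2t_2} \, |\epsilon|^2$, giving $\ord_p \det T = m_1 + m_2 + 2\ord_p(\beta_2, \beta_1')$ and hence \emph{(iv)}. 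Part \emph{(i)} is automatic here because $\Lambda$ itself lies in $\mathcal{B}(b_1) \cap \mathcal{B}(b_2)$.

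The main obstacle is organizational rather than conceptual: the type-$0$ versus type-$2$ dichotomy for the central lattices, together with the parity of $d$ in Case 1, produces several combinatorial sub-cases each requiring its own standard basis. All of them are formally parallel up to shifts of $p$-exponents in the hyperbolic pairing, so it suffices to carry out carefully one representative (for instance $\Lambda_1$ self-dual with $d$ even) and verify that the exponents $s$, $m_1$, $m_2$ and the relations between them transform consistently in the remaining cases.
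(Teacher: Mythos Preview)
Your proposal is correct and follows essentially the same route as the paper's proof: standard-apartment coordinates for the pair $(\Lambda_1,\Lambda_2)$ in Case~1, an orthogonal basis $\{\beta_1,\beta_1'\}$ of $\Lambda$ in Case~2, and the characterization of the elementary divisors of $T$ via the minimum entry valuation and $\ord_p\det T$. The only differences are expository: you make explicit the isotropy argument forcing both coordinates of each $\beta_i$ to be units, and you spell out that $\ord_p\det T = 2s$ in Case~1 because the off-diagonal product dominates when $d>0$, both of which the paper leaves to the reader.
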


\begin{corollary} \label{mainLocGeomCor}
Suppose $\mathbf b = [b_1, b_2] \in \mathbb V^2$ is a linearly independent pair of vectors with $T = (\mathbf b, \mathbf b)$  as above. Then the pairing $\deg Z(\mathbf b) := \la Z(b_1), Z(b_2) \ra$ depends only on the $GL_2(o_{k,p})$-conjugacy class of $T$. More precisely, if $T \in \Herm_2(o_k)$ and $T \sim {\diag}(p^a, p^b)$ with $a \geq b$, then 
\begin{equation} \label{mainLocGeomCorEqn}
 \deg Z(\mathbf b) \ = \ \la Z(b_1) , \, Z(b_2) \ra \ = \ \frac{a+b}{2} \ - \ p \left( \frac{p^b -1}{p-1} \right) \ =: \ \mu_p(T). 
\end{equation}
\begin{proof}
This follows immediately from \refThm{localDegThm} and \refLemma{TdiagLemma}.
\end{proof}
\end{corollary}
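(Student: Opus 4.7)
The plan is to combine \refThm{localDegThm}, \refLemma{TdiagLemma}, and \refProp{hhProp} in a straightforward case analysis, matching the trichotomy of \refThm{localDegThm} with the classification of diagonal forms in \refLemma{TdiagLemma}. Since we are given $T \in \Herm_2(o_{k,p})$, part (i) of \refLemma{TdiagLemma} rules out the case $\mathcal B(b_1) \cap \mathcal B(b_2) = \emptyset$, so only cases (ii) and (iii) of \refThm{localDegThm} are relevant. In each case the task is to verify, via a direct substitution, that the formula coming from \refThm{localDegThm} agrees with $(a+b)/2 - p(p^b-1)/(p-1)$ for the $(a,b)$ dictated by \refLemma{TdiagLemma}.

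In the ``disjoint-exterior'' case $\mathcal B(b_1) \not\subset \mathcal B(b_2)$, \refLemma{TdiagLemma}(iii) gives $T \sim \diag(p^r, p^r)$ with $r = (m_1+m_2-d)/2$, so $a = b = r$, and \refThm{localDegThm}(iii) returns $r - p(p^r-1)/(p-1)$, matching the claim. In the nested case $\mathcal B(b_1) \subset \mathcal B(b_2)$, I would split on whether $\Lambda_1 = \Lambda_2$. If $\Lambda_1 \neq \Lambda_2$, then \refProp{hhProp} makes the horizontal-horizontal term in \refThm{localDegThm}(ii) vanish, and \refLemma{TdiagLemma}(ii) supplies $a = m_2 - d$, $b = m_1$; one checks $a+b = m_1+m_2-d$, and the formulas align. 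If $\Lambda_1 = \Lambda_2$, then $d=0$ and \refProp{hhProp} contributes $\la Z(b_1)^h, Z(b_2)^h\ra = \ord_p(\beta_2,\beta_1')$; meanwhile \refLemma{TdiagLemma}(iv) gives $a = m_2 + 2\ord_p(\beta_2,\beta_1')$ and $b=m_1$, so again $(a+b)/2$ equals $(m_1+m_2)/2 + \ord_p(\beta_2,\beta_1')$ and the match is exact.

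There is no real obstacle beyond careful bookkeeping. The conceptual point worth highlighting in the writeup is the observation already made in the proof of \refLemma{TdiagLemma}: an elementary-divisor pair $(a,b)$ with $a \geq b$ is characterized by $b = \min\{ \ord_p(\text{entries}) \}$ and $a+b = \ord_p \det T$. This is precisely why the exponent $m_1$ appearing in \refThm{localDegThm}(ii), and the exponent $r$ in \refThm{localDegThm}(iii), convert cleanly into the invariant $b$ of the corollary, while the determinant contribution (including any correction from \refProp{hhProp}) accounts for $a$. Once this dictionary is spelled out, the independence of $\deg Z(\mathbf b)$ from the specific $\mathbf b$ representing the $GL_2(o_{k,p})$-class of $T$ is automatic.
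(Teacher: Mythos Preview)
Your proposal is correct and is exactly the approach the paper intends: the one-line proof ``follows immediately from \refThm{localDegThm} and \refLemma{TdiagLemma}'' is precisely the case-by-case matching you have spelled out, with \refProp{hhProp} supplying the horizontal-horizontal term referenced in \refThm{localDegThm}(ii). Your explicit dictionary between $(a,b)$ and the geometric data $(m_1,m_2,d,\ord_p(\beta_2,\beta_1'))$ is the content that the paper leaves to the reader.
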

\section{A closed-form formula for certain representation densities}
Suppose $S \in \Herm_m(o_{k,p})$ and $T \in \Herm_n(o_{k,p})$ for some integers $m$ and $n$, where we continue to work with the localization $o_{k,p}$ at an unramified prime $p$. Then we may consider the \emph{representation density}
\begin{equation} \label{repDenDefEqn}
 \alpha(S, T) \ := \ \lim_{k \to \infty} p^{-kn(2m-n)} \ \# \{ x \in M_{m,n}(o_{k,p} / p^k)  \ | \ {}^t(x') S x \equiv T \pmod{p^k} \}, 
\end{equation}
which will play a crucial role in our determination of the Fourier coefficients of Eisenstein series. 
As this quantity depends only the $GL_n(o_{k,p})$ (resp. $GL_m(o_{k,p})$)-conjugacy classes of $T$ and $S$ respectively, we may suppose that they are diagonal.
Moreover, for a fixed $S \in \Herm_m(o_{k,p})$, we write
\[ S_r \ := \ \begin{pmatrix} S & \\ & \Id_r \end{pmatrix} \ \in \ \Herm_{m+r}(o_{k,p}). \]
As we will see shortly, there is a polynomial $F(S, T; X) \in \Q[X]$ such that
\[ \alpha(S_r, T) \ = \ F(S,T; (-p)^{-r}) . \]
The aim of this section is to prove the following closed-form expression for $F(S,T,X)$ when $T \in \Herm_2(o_{k,p})$ such that $\ord_p \det(T)$ is even,  and $S = \diag(p,1)$. This can be seen as the counterpart to Nagaoka's  result \cite{Nag}, which considers the case $S = \Id_2$. 

\begin{proposition} \label{RepDensProp}
Set $S = \diag(p,1)$. Then
\begin{compactenum}[(i)]
\item 
\[ F(S, \Id_2; X) = \frac{(1-X)(X+p)}{p}  \ \ \text{and} \ \  F(S,( \sm{ p & \\ & p });X) =  \frac{(1-X)(X+p)}{p} \left( X^2 - (p^2 - p)X + 1 \right)  \]
\item Let $T = \diag(p^a, p^b)$, where $a \geq b \geq 0$ and $a+b$ is even. 
 Set 
\[ \epsilon = \begin{cases} 0, & \text{if } b \text{ is even} \\ 1, & \text{if } b \text{ is odd. } \end{cases} \]
and define
\[ F_{\epsilon}(X) \ := \ F(S, \ p^{\epsilon} \cdot  \Id_2; \ X) \ = \ \frac{(1-X)(X+p)}{p} \left( X^2 - (p^2 - p)X + 1 \right)^{\epsilon}. \]
Then
\begin{align*}
 F(S,T;X) \ =&  \ F_{\epsilon}(X) +  \frac{(X-1)(X+p)}{X-p} \Bigg\{ (pX)(1-p) \frac{ (pX)^b - (pX)^{\epsilon} }{pX - 1}  + X^2 (p - p^{-1}X)  \frac{X^{2b} - X^{2 \epsilon}}{X^2-1} \\
 & \ + \left( -p^{b+1}(X-1) + p X^{b+1} - p^{-1} X^{b+2} \right) \cdot \frac{X^{a+1} - X^{b+1}}{X^2-1}  \Bigg\}.
 \end{align*}
 \end{compactenum}

 \end{proposition}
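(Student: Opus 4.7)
The proof proceeds in two stages: establishing the base cases in (i), then deriving the general formula in (ii) by specializing Hironaka's explicit formula \cite{Hiro} for unramified Hermitian representation densities.

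For (i), the two polynomials $F(S, \Id_2; X)$ and $F(S, p \Id_2; X)$ can be obtained by a direct evaluation of the defining limit \eqref{repDenDefEqn}. For $k$ large enough, the count of $x \in M_{m+r,2}(o_{k,p}/p^k)$ with ${}^t(x') S_r x \equiv T \pmod{p^k}$ stabilizes after dividing by $p^{-kn(2m-n)}$ and reduces to a standard lattice-point count inside the Hermitian space defined by $S_r$. Since the answer is known a priori to be a polynomial in $X = (-p)^{-r}$ of bounded degree, one need only compute $\alpha(S_r, T)$ for a handful of values of $r$ and interpolate, or specialize Hironaka's formula directly to these small cases.

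For (ii), the plan is to apply Hironaka's formula to $S = \diag(p, 1)$ and $T = \diag(p^a, p^b)$. The formula expresses $\alpha(S_r, T)$ as a finite sum over certain combinatorial data indexed by pairs of non-negative integers (``Hironaka invariants'') constrained by the elementary divisors of $T$ and $S$, each summand being a product of explicit local factors that are rational in $p$ and in $X = (-p)^{-r}$. The key structural observation is that, for this particular choice of $S$, the sum splits into four contributions: a ``boundary'' contribution corresponding to the minimal value of the indices, which equals $F_\epsilon(X)$; and three one-parameter families of combinatorial data whose summation produces the three geometric series $\tfrac{(pX)^b - (pX)^\epsilon}{pX-1}$, $\tfrac{X^{2b} - X^{2\epsilon}}{X^2-1}$, and $\tfrac{X^{a+1} - X^{b+1}}{X^2-1}$ appearing in the statement. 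The common prefactor $(X-1)(X+p)/(X-p)$, as well as the individual coefficients of each geometric series, arise from collecting the $p$- and $X$-dependent weights in Hironaka's formula and simplifying. As a sanity check, setting $a = b = \epsilon$ makes all three geometric series vanish, so the formula correctly collapses to $F_\epsilon(X)$, matching part (i).

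The principal obstacle will be the careful combinatorial bookkeeping in the specialization of Hironaka's formula: in particular, tracking the local factors $(1 + p^{-1}X)$, $(1 - p^{-2}X^2)$, etc.\ that appear as prefactors, and identifying precisely how the extra factor of $p$ in the $(1,1)$-entry of $S$ (compared to the case $S = \Id_2$ treated by Nagaoka \cite{Nag}) shifts the summation ranges and produces the common prefactor $(X-1)(X+p)/(X-p)$. The final step is an algebraic verification that the four contributions combine into the claimed closed form --- a tedious but mechanical manipulation of finite geometric series once the combinatorial set-up is in place.
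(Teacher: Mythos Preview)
Your treatment of part (i) matches the paper: the base cases are verified by direct computation from Hironaka's formula \eqref{FPolyEqn}.

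For part (ii), however, your proposal does not describe a workable method, and it differs substantially from what the paper does. You assert that Hironaka's double sum over pairs $(c,d)$ ``splits into four contributions'': a boundary piece equal to $F_\epsilon(X)$ and three one-parameter families that produce the three geometric series. But Hironaka's sum \eqref{FPolyEqn} is a genuine double sum over a triangular region, and the products $\prod_j I_j\bigl((\sm{c\\d}),\lambda\bigr)$ depend on $(c,d)$ through a five-case analysis (\refLemma{IjLemma}). There is no natural way to peel off three one-dimensional slices whose sums are the displayed geometric series; your description is really reading the shape of the final answer back into the sum, not a mechanism for deriving it.

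The paper's argument is inductive rather than direct. It proves two difference formulas (\refLemma{IndStepA} and \refLemma{IndStepB}): one for $F(S,\diag(p^{a+2},p^b);X)-F(S,\diag(p^a,p^b);X)$, and one for $F(S,\diag(p^{b+2},p^{b+2});X)-F(S,\diag(p^{b+2},p^b);X)$. The point is that in each difference almost all terms of Hironaka's sum cancel, since $I_j\bigl((\sm{c\\d}),\Lambda\bigr)=I_j\bigl((\sm{c\\d}),\lambda\bigr)$ whenever $c$ is small; only a short boundary strip survives, and those few terms are evaluated explicitly via the table in \refLemma{IjLemma}. One then telescopes: first combine the two lemmas to pass from $F(\epsilon,\epsilon)$ to $F(b,b)$, which produces the first two geometric series, and then apply \refLemma{IndStepA} repeatedly to pass from $F(b,b)$ to $F(a,b)$, which produces the third. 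The common prefactor $(X-1)(X+p)/(X-p)$ emerges already in the two difference lemmas, not from any global reorganization of Hironaka's sum. If you want to carry out your approach, you would effectively have to rediscover this recursive structure; the direct decomposition you sketch does not exist as stated.
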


The proof of this proposition, which appears at the end of this subsection, amounts to specializing Hironaka's explicit formula \cite[Theorem II]{Hiro} to the case at hand. We shall briefly review Hironaka's notation in the general case. First, given integers $u \geq v \geq 0$, we define a symbol
\[ \begin{bmatrix} u \\ v \end{bmatrix} \ := \ \frac{ \prod_{i=1}^u \left( 1 - (-p)^{-i} \right) }{  \prod_{i=1}^v \left( 1 - (-p)^{-i} \right) \ \prod_{i=1}^{u-v} \left( 1 - (-p)^{-i} \right) }\] 
Let 
\[ \Z^k_{\succ0} \ := \ \left\{ a = (a_1, \dots a_k) \in \Z^k  \ | \ a_1 \geq a_2 \geq \dots a_k \geq 0  \right\} \]
denote the set of non-increasing non-negative vectors in $\Z^k$. Given $a = (a_1, \dots, a_k) \in \Z^k_{\succeq0}$,  set
\[ \tilde a \ := \ ( a_1 + 1, \dots,  a_k + 1 )  \in \Z^k_{\succeq0} \]
and for any $i \geq 1$, we let 
\[ a_i' := \# \left\{ j \ | \ a_j  \geq i \right\}. \]
Next, suppose $\lambda , \mu \in \Z^k_{\succ0}$. For an integer $j \geq 1$, we define\footnote{There is a typographical error in the statement of Theorem II of \cite{Hiro}: the corresponding formula in loc.\ cit.\ appears {without} a necessary tilde in the exponent of $(-p)$.}
\[ I_j ( \mu, \lambda) \ := \ \sum_{i = \mu'_{j+1} }^{\min \left( (\tilde \lambda)'_{j+1},  \ \mu'_j\right)} \ (-p)^{i \left( 2 (\tilde\lambda)'_{j+1}   + 1  - i \right)/2}  
 \cdot \begin{bmatrix} (\tilde \lambda)'_{j+1} - \mu'_{j+1} \\[8pt]  (\tilde \lambda)'_{j+1} - i \end{bmatrix} \cdot \begin{bmatrix} (\tilde \lambda)'_{j} - i \\[8pt]  (\tilde \lambda)'_{j} - \mu'_{j}  \end{bmatrix}.  \]
We also define a partial order on $\Z^k_{\succ0}$ by declaring 
\[ a \leq b \ \iff \ a_i \leq b_i \ \text{for all }  i = 1, \dots, k. \]
Finally, we put
\[ |a| := \sum_{i=1}^k a_i, \qquad  \text{and} \qquad n(a) := \sum_{i=1}^k (i-1) a_i. \]
With all of this notation in place, we can state Hironaka's formula:

\begin{theorem}[{\cite[Theorem II]{Hiro}}]  \label{HiroThm}
Let $\lambda \in \Z^n_{\succ0} $ and $\xi \in \Z^m_{\succ0}$ with $m \geq n$. Suppose that $T_{\lambda} \in \Herm_n(o_k)$ is $GL_n(o_k)$-equivalent to ${\diag}(p^{\lambda_1}, \dots p^{\lambda_n})$, and that $S_{\xi}$ is equivalent to ${\diag}(p^{\xi_1}, \dots , p^{ \xi_m})$. Then 
\begin{align*}
\alpha(S_{\xi},T_{\lambda}) \ = \ \sum_{ \substack{\mu \in \Z^n_{\succ0} \\  \mu \leq \tilde\lambda}} (-1)^{|\mu|} (-p)^{-n(\mu) + (n-m-1)|\mu| + \la \xi', \mu' \ra}  \cdot \prod_{j \geq 1} I_j(\mu, \lambda),
\end{align*}
where $\la \xi' , \mu' \ra = \sum_{i \geq 1} \xi'_i \ \mu'_i$.  \qed
\end{theorem}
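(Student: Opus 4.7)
The plan is to establish the formula by relating $\alpha(S_\xi, T_\lambda)$ to the theory of spherical functions on the $p$-adic Hermitian symmetric space $X_n(k_p) = \{T \in GL_n(k_p) : {}^t T' = T\}$, following Hironaka's strategy in \cite{Hiro}. The starting point is to reinterpret the limit in \eqref{repDenDefEqn} as a $p$-adic integral, and then decompose the integration domain according to the elementary-divisor type $\mu \in \Z^n_{\succ0}$ of the image lattice $L_x := x(o_{k,p}^n) \subset o_{k,p}^m$.

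First I would verify that only lattices with invariants $\mu \leq \tilde\lambda$ contribute, using the basic lattice-theoretic fact that a representation ${}^t x' S_\xi x = T_\lambda$ forces divisibility relations between the elementary divisors of $L_x$ and those of $T_\lambda$. For each permissible $\mu$, I would further factor the contribution as (count of inclusions $L \hookrightarrow o_{k,p}^m$ of type $\mu$ relative to $S_\xi$) times (count of surjections $o_{k,p}^n \twoheadrightarrow L$ producing $T_\lambda$). The first factor yields the exponent $\langle \xi', \mu' \rangle$ via a Hermitian Gauss-sum evaluation that at the unramified prime $p$ collapses to a sign and a power of $p$; the second factor supplies the power $(-p)^{-n(\mu) + (n-m-1)|\mu|}$ through a direct count involving Haar measures on $GL_n(o_{k,p})$ and its parabolic subgroups.

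Second, and this is the combinatorial heart of the argument, I would prove the product formula $\prod_{j \geq 1} I_j(\mu, \lambda)$ for the level-by-level structure. The idea is to stratify lattices with invariants $\mu$ sitting inside a lattice of invariants $\tilde\lambda$ by the conjugate partitions $\mu'_j$ and $(\tilde\lambda)'_j$: at each level $j$, extending a rank-$(\tilde\lambda)'_{j+1}$ embedding of conductor $\mu'_{j+1}$ to a rank-$(\tilde\lambda)'_j$ embedding of conductor $\mu'_j$ requires choosing an intermediate rank-$i$ object. The count of such intermediates is precisely the product of two $q$-binomial coefficients specialized at $q = -p$, while the vertical shift from level $j+1$ to level $j$ contributes the quadratic exponent $i(2(\tilde\lambda)'_{j+1} + 1 - i)/2$ from a Gauss sum on the Hermitian space of size $(\tilde\lambda)'_{j+1}$. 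Summing over the intermediate rank $i$ in the range $[\mu'_{j+1},\, \min((\tilde\lambda)'_{j+1}, \mu'_j)]$ then yields exactly $I_j(\mu, \lambda)$, and the levels decouple because at each step the residual Hermitian form is, after appropriate rescaling, an unramified Hermitian form over the same ring.

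The main obstacle is precisely this clean decoupling of the levels, or equivalently the Hall–Littlewood-type identity at $q = -p$ that underpins the product structure. Macdonald's general spherical-function formula writes each contribution as a Weyl-group sum, and re-indexing this sum over partitions $\mu$ rather than over Weyl elements requires a careful telescoping of $q$-binomial identities peculiar to the Hermitian (rather than symplectic or orthogonal) unramified setting. Verifying that the signs $(-1)^{|\mu|}$ line up correctly — ultimately traceable to the sign character of $k_p^\times$ restricted to norms — and that all boundary terms vanish outside the indicated summation range, is the most error-prone step, and would naturally be carried out by induction on $n$ with the induction hypothesis packaged in terms of spherical function values on $X_{n-1}(k_p)$.
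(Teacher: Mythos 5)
First, a point of comparison: the paper contains no proof of this statement at all. Theorem \ref{HiroThm} is quoted verbatim from Hironaka \cite{Hiro} (Theorem II), with the only original content being the footnote correcting a typographical error in the exponent of $(-p)$; the burden of proof is carried entirely by the citation. So your proposal is not an alternative route to the paper's argument — it is an attempt to reconstruct Hironaka's own proof, which is a substantially harder task than the paper undertakes.

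Judged on its own terms, what you have written is a strategy outline rather than a proof, and the gap sits exactly where you locate it yourself. The reduction of $\alpha(S_\xi,T_\lambda)$ to a sum over image lattices $L_x$ stratified by elementary divisor type $\mu\leq\tilde\lambda$ is plausible, but every quantitative ingredient of the formula is asserted rather than derived: the claim that the embedding count produces precisely the exponent $\la \xi',\mu'\ra$, the claim that the surjection count produces $(-p)^{-n(\mu)+(n-m-1)|\mu|}$, and above all the claim that the level-by-level count decouples into $\prod_{j\geq 1} I_j(\mu,\lambda)$ with the quadratic exponent $i\left(2(\tilde\lambda)'_{j+1}+1-i\right)/2$ and the two $(-p)$-binomial factors in exactly the stated ranges. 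You explicitly defer the ``Hall--Littlewood-type identity at $q=-p$,'' the verification of the signs $(-1)^{|\mu|}$, and the vanishing of boundary terms to an induction you do not carry out; but these are not routine bookkeeping — they are the entire content of Hironaka's theorem (note, for instance, that the delicate boundary behaviour is real: the paper's footnote records that even the published statement had an error in this exponent). As it stands the proposal would not certify the formula; if you want a self-contained treatment you must either execute the induction on $n$ with the intermediate-lattice counts proved as lemmas, or do what the paper does and invoke \cite{Hiro} directly.
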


We now specialize this formula to our case of interest:
take $n = 2$, $m = r+2$, $\lambda = (a, b)$ with $a+b$ even, and  $\xi = (1, 0, \dots 0) \in \Z^{r+2}_{\succ0}$.  If we put
\[ T  = T_{\lambda} := \ \diag(p^{a}, p^{b}), \qquad \text{and} \qquad  S = \diag(p,1), \]
then in particular $S_r = S \oplus 1_r = S_{\xi}$. Taking $X := (-p)^{-r}$ in Hironaka's theorem gives us the expression 
\begin{align} \label{FPolyEqn}
 F(S, T ; X) \ = \ \sum_{c = 0}^{ a +1} \ \sum_{d = 0}^{\min(c,b+1)} (-1)^d p^{-2d - c} X^{c+d} (-p)^{\epsilon_c + \epsilon_d} \cdot \prod_{j \geq 1} I_j \left( (\sm{c \\ d}), \lambda \right), 
\end{align}
where $\epsilon_c$ is equal to $0$ if $c=0$, and is equal to $1$ if $c \geq 1$; we define $\epsilon_d$ likewise. 

Our first step towards giving a closed form expression for \eqref{FPolyEqn} is the following table of values for $I_j(\cdot, \cdot)$, which is easily proven by explicit computation. 
 \begin{lemma} \label{IjLemma}
 Suppose $\ell = (\alpha, \beta)$, and $(c,d) \leq (\alpha + 1, \beta + 1)$ are integers with $c \geq d$. 

(i) If  $c > \beta+1 \geq d$, then
\[ I_j \left( \binom{c}{d}, \ell \right) = 
\begin{cases}
-p^3, & 1 \leq j < d \\
p^2 -p^3, & j=d, \ d < \beta+1 \\
p^2, & d+1 \leq j < \beta+1\\
-p, & \beta+1 \leq j < c \ (\text{including } d = \beta+1) \\
1-p, & j = c < \alpha + 1 \\
1, & j = c= \alpha + 1  \text{ or } j > c.
\end{cases}
\]

(ii) If $\beta \geq c > d$, then
\[ I_j \left( \binom{c}{d}, \ell \right) = 
\begin{cases}
-p^3, & 1 \leq j < d \\
p^2 -p^3, & j=d \\
p^2, & d+1 \leq j < c\\
(1+p^2)(1 - p^{-1}) , & j = c \\
1, & j> c.
\end{cases}
\]

(iii) If $c = d \leq \beta$, then
\[ 
 I_j \left( \binom{d}{d}, \ell \right) = 
\begin{cases}
-p^3, & 1 \leq j < d \\
(1+p^2)(1 - p) , & j = d \\
1, & j > d.
\end{cases}
\]

(iv) If $c = \beta + 1$, $d \leq \beta$, then
\[ 
 I_j \left( \binom{\beta + 1}{d}, \ell \right) = 
\begin{cases}
-p^3, & 1 \leq j < d \\
p^2 -p^3, & j=d \\
p^2, & d+1 \leq j < \beta + 1\\
-p^{-1} + 1 - p, & j = \beta + 1 < \alpha + 1 \\
1 - p^{-1}, & j = \beta+ 1 = \alpha + 1 \\
1, & j > \beta + 1.
\end{cases}
\]

(v) If $ c = d = \beta + 1$, then
\[ 
 I_j \left( \binom{\beta + 1}{\beta+1}, \ell \right) = 
\begin{cases}
-p^3, & 1 \leq j < \beta + 1 \\
1 - p, & j = \beta + 1 < \alpha + 1\\
1, & j > \beta + 1 \text{ or } j = \beta + 1 = \alpha + 1
\end{cases}
\]
\qed
 \end{lemma}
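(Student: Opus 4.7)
The plan is a direct case-by-case evaluation of the defining sum $I_j(\mu,\lambda)$ at $\mu = (c,d)$ and $\lambda = (\alpha,\beta)$. The preparatory step is to tabulate the conjugate partitions. Since $\mu$ has only two parts, $\mu'_i = 2$ for $1 \leq i \leq d$, $\mu'_i = 1$ for $d < i \leq c$, and $\mu'_i = 0$ for $i > c$; analogously $(\tilde\lambda)'_i$ equals $2,1,0$ according as $i \leq \beta+1$, $\beta+1 < i \leq \alpha+1$, or $i > \alpha+1$. Consequently every symbol $\bigl[\begin{smallmatrix} u \\ v \end{smallmatrix}\bigr]$ occurring in $I_j$ has $u,v \in \{0,1,2\}$, and a one-time calculation from the defining product gives
$\bigl[\begin{smallmatrix}0\\ 0\end{smallmatrix}\bigr] = \bigl[\begin{smallmatrix}1\\ 0\end{smallmatrix}\bigr] = \bigl[\begin{smallmatrix}1\\ 1\end{smallmatrix}\bigr] = \bigl[\begin{smallmatrix}2\\ 0\end{smallmatrix}\bigr] = \bigl[\begin{smallmatrix}2\\ 2\end{smallmatrix}\bigr] = 1$ and $\bigl[\begin{smallmatrix}2\\ 1\end{smallmatrix}\bigr] = 1 - p^{-1}$.

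With this in hand, each of the five cases reduces to bookkeeping. For a fixed case, I would partition the $j$-axis into sub-ranges determined by the jump points of $\mu'$ and $(\tilde\lambda)'$, namely $d, \beta+1, c, \alpha+1$ ordered as dictated by the hypothesis of the case, read off the quadruple $((\tilde\lambda)'_{j+1}, (\tilde\lambda)'_j, \mu'_{j+1}, \mu'_j)$ from the tables above, and substitute. In every subcase the summation range contains at most three values of $i$, both symbols evaluate to elements of $\{1,\, 1 - p^{-1}\}$, and the prefactor $(-p)^{i(2(\tilde\lambda)'_{j+1}+1-i)/2}$ is a signed power of $p$ that depends only on $i$ and $(\tilde\lambda)'_{j+1} \in \{0,1,2\}$, so each $I_j$ reduces to a sum of one, two, or three terms. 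As a sanity check: in case (iii) at the boundary $j = d$ one has $\mu'_{d+1} = 0$, $\mu'_d = 2$ and $(\tilde\lambda)'_{d+1} = (\tilde\lambda)'_d = 2$; summing over $i \in \{0,1,2\}$ yields $1 + p^2(1 - p^{-1}) + (-p^3) = (1 + p^2)(1-p)$, matching the tabulated entry.

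The main obstacle is not conceptual but organizational. With five top-level cases, each admitting up to six subcases indexed by the position of $j$ relative to the jump points, there are a couple of dozen numerical verifications to discharge, and several require close attention to whether a boundary coincides with the upper endpoint of a range (for instance whether $j = \beta+1$ also equals $\alpha+1$, which distinguishes the last two rows of cases (i) and (iv), and similarly for $j = c$). To control this I would first draw up, once and for all, a single template assigning to each possible quadruple $((\tilde\lambda)'_{j+1}, (\tilde\lambda)'_j, \mu'_{j+1}, \mu'_j)$ the resulting value of $I_j$, and then invoke this template uniformly in every subcase of (i)--(v). Organising the verification in this way guarantees that no boundary index is double-counted or omitted, and makes the structure of the answers in the lemma (a dominant $\pm p^{3}$ in the bulk, a transitional term at the interface, $\pm p$ in the next zone, and explicit boundary corrections proportional to $1 - p^{-1}$) transparent as a direct reflection of which of the three regimes the index $j$ is passing through.
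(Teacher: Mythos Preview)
Your approach is correct and is precisely the ``explicit computation'' the paper alludes to; indeed the paper offers no proof beyond that phrase and a \qed, so your outline is already more detailed than what appears there. The tabulation of $\mu'$ and $(\tilde\lambda)'$, the evaluation of the six relevant symbols $\bigl[\begin{smallmatrix}u\\v\end{smallmatrix}\bigr]$, and the sample verification in case~(iii) are all accurate, and your template-based organization of the boundary cases is a sound way to discharge the remaining subcases without error.
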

 
 Next, we give a pair of lemmas describing inductive formulas for the representation densities:
\begin{lemma}  \label{IndStepA}
Suppose $T^+ = \diag(p^{a+2}, p^b)$ and $ T  = \diag(p^a, p^b)$  for a pair of integers $a,b$ such that $a+b$ is even. 
Then 
\[ F(S, T^+; X) - F(S, T; X) = \frac{X^{a+1}(X+p)(X-1)}{X-p} \left( -p^{b+1} (X-1) + p X^{b+1} - p^{-1} X^{b+2} \right), \]
where $S = \diag(p,1)$. 
\begin{proof}
Let $\Lambda = (a+2, b)$ and $\lambda = (a,b)$, and abbreviate
\[ F(\Lambda) \ := \ F( S, T^+, X) \qquad \text{and} \qquad F(\lambda) := F( S,T, X).\]
Note that if $c \leq a$, then $ I_j \left( (\sm{c \\ d}), \Lambda \right) = I_j \left((\sm{c \\ d}), \lambda\right)$, and so 
\begin{align*}
F(\Lambda) - F(\lambda) &=  - p^{-a} X^{a+1} \sum_{d =0}^{b+1} (-1)^d p^{-2d} X^{d} (-p)^{\epsilon_d}  \cdot \left\{\prod_{j}  I_j \left( (\sm{a+1 \\ d}) , \Lambda \right) - \prod_j  I_j \left( (\sm{a+1 \\ d}), \lambda \right) \right\} \\
& \qquad -p^{-(a+1)} X^{a+2}  \sum_{d =0}^{b+1} (-1)^d p^{-2d} X^{d} (-p)^{\epsilon_d} \prod_j I_j \left( (\sm{a+2 \\ d}), \Lambda \right) \\
& \qquad \qquad -p^{-(a+2)} X^{a+3}  \sum_{d =0}^{b+1} (-1)^d p^{-2d} X^{d} (-p)^{\epsilon_d} \prod_j I_j \left(  (\sm{a+3 \\ d}), \Lambda \right) \\
&= - p^{-a}X^{a+1} \sum_{d=0}^{b+1} (-1)^d (-p)^{-2d + \epsilon_d} X^{d}   \\
& \quad \times \Bigg\{  \prod_j I_j \left( (\sm{a+1 \\ d}) ,\Lambda \right) -  \prod_j I_j \left( (\sm{a+1 \\ d}) ,\lambda \right) + \frac{X}{p} \prod_j I_j \left( (\sm{a+2 \\ d}) ,\Lambda \right)  + \frac{X^2}{p^2}  \prod_j I_j \left( (\sm{a+3 \\ d}) ,\Lambda \right) \Bigg\}.
\end{align*}
The term in curly braces can be computed explicity using \refLemma{IjLemma}, and the result readily follows.
\end{proof}

%
%
%
%
%
\end{lemma}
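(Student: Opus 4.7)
The plan is to exploit Hironaka's formula \eqref{FPolyEqn} together with the structural observation that, for $\ell = (\alpha, \beta)$, the quantity $I_j\!\left(\binom{c}{d}, \ell\right)$ depends on $\alpha$ only through whether $c = \alpha + 1$ or $c < \alpha + 1$ (cf.\ parts (i) and (iv)--(v) of \refLemma{IjLemma}). Writing $\lambda = (a, b)$ and $\Lambda = (a+2, b)$, this immediately gives
\[ I_j\!\left(\tbinom{c}{d}, \Lambda\right) \ = \ I_j\!\left(\tbinom{c}{d}, \lambda\right) \qquad \text{for all } c \leq a, \]
so that every contribution from $c \leq a$ cancels in $F(\Lambda) - F(\lambda)$. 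The difference is therefore supported on the four boundary/new terms $c = a+1$ (present in both sums, but with different $I_j$-values since $a+1 = \alpha + 1$ for $\lambda$ while $a+1 < \Lambda_1 + 1$ for $\Lambda$), and $c \in \{a+2, a+3\}$ (present only in $F(\Lambda)$).

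First, I would expand each of these three $c$-contributions using the explicit table in \refLemma{IjLemma}, splitting the $d$-sum into three ranges: $d = 0$ (the $\epsilon_d = 0$ boundary case), $1 \leq d \leq b$ (the generic interior range, which falls under part (i) or (iv) of the lemma), and $d = b+1$ (the boundary case in the other direction, governed by parts (iv)--(v)). In each range, the product $\prod_j I_j$ reduces to an explicit monomial in $p$ that depends only on $d$ and on whether $d < b+1$, $d = b+1$, and on the relative position of $c$ to $\beta + 1$. Substituting these values into \eqref{FPolyEqn}, the sums over $d$ in each range become finite geometric series in the variable $-p^{-2} X$, which can be summed in closed form.

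Next, I would add the three $c$-contributions and simplify. At this stage the key algebraic identity to verify is that the three resulting rational functions of $X$ combine, after clearing a common denominator, into the asserted product $\tfrac{X^{a+1}(X+p)(X-1)}{X-p}\bigl(-p^{b+1}(X-1) + pX^{b+1} - p^{-1}X^{b+2}\bigr)$. The factor $X^{a+1}$ comes out immediately from the overall prefactor $-p^{-a}X^{a+1}$ in the displayed computation above, and the $(X-p)$ denominator emerges from combining the geometric series $\tfrac{(-p^{-2}X)^{b+1} - 1}{(-p^{-2}X) - 1}$ with the contributions from the boundary terms at $d = b+1$.

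The main obstacle is purely computational: keeping the bookkeeping straight across the several cases in \refLemma{IjLemma} (particularly the distinction between the case $a+1 = \alpha+1$ for $\lambda$ and $a+1 < \Lambda_1 + 1$ for $\Lambda$, which is exactly what makes the $c = a+1$ term contribute a nontrivial difference), and then carrying out the algebraic simplification without error. The proof is essentially mechanical once the cancellation principle above reduces the problem to a finite explicit sum; the only conceptual input is recognizing that only boundary values of $c$ matter.
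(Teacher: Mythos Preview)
Your approach is essentially identical to the paper's own proof: both use the observation that $I_j\bigl(\binom{c}{d},\ell\bigr)$ depends on $\alpha$ only through whether $c=\alpha+1$ or $c<\alpha+1$, so that all terms with $c\leq a$ cancel and the difference $F(\Lambda)-F(\lambda)$ is supported on $c\in\{a+1,a+2,a+3\}$, after which one appeals to \refLemma{IjLemma} for an explicit (if tedious) evaluation. Your outline simply spells out in more detail how to organize the remaining $d$-sum, which the paper compresses into the single sentence ``the term in curly braces can be computed explicitly using \refLemma{IjLemma}, and the result readily follows.''
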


\begin{lemma} \label{IndStepB}
Suppose $T^+ = \diag(p^{b+2}, p^{b+2})$ and $T = \diag(p^{b+2}, p^{b})$, and set $S = \diag(p,1)$. Then
\begin{align*}
 F(S,T^+; X) - F (S,T; X) \  = \  &\frac{X^{b+1} (X+p)(X-1)}{X-p}  \\
 & \qquad \times  \left[ \left( (1+p-p^2)X - p \right) p^{b+1} +  \frac{p^2 - X}{p} X^{b+3} \right] .
 \end{align*}

\begin{proof}
Set $\Lambda := (b+2, b+2)$ and $\lambda = (b+2, b)$, and abbreviate
\[ F(\Lambda) := F(S, T^+;X), \qquad  F(\lambda) := F(S,T;X). \]
Note that for $c \leq b$ and any $j$, we have  $ I_j \left( (\sm{ c \\ d}), \Lambda \right)  = I_j \left( (\sm{c \\ d}), \lambda \right),$
and so
\begin{align*}
F(\Lambda) - F(\lambda) = \sum_{d=0}^{b+1} & (-1)^d  (-p)^{-2d + 1 + e_d} X^d \left\{ \sum_{c=b+1}^{b+3} p^{-c} X^c \left( \prod_{j=1}^c I_j \left( (\sm{c \\ d}) , \Lambda \right) - \prod_{j=1}^c I_j \left(  (\sm{c \\ d}), \lambda \right) \right) \right\} \\
 & + (-1)^{b+2}(-p)^{-2b-2} X^{b+2} \left\{ \sum_{c= b+2}^{b+3} p^{-c}  X^c \prod_{j=1}^c I_j \left( (\sm{c \\ b+2}), \Lambda \right) \right\} \\
 & \quad + (-1)^{b+3}(-p)^{-2b-4} X^{b+3} \left\{ p^{-(b+3)} X^{b+3} \prod_{j=1}^{b+3} I_j \left((\sm{b+3\\  b+3}), \Lambda \right) \right\},
\end{align*}
where 
\[ e_d := \begin{cases} 0, & \text{if } d = 0 \\ 1, & \text{if } d > 0. \end{cases} \]
The terms in curly braces can again be computed via \hyperref[IjLemma]{Lemma \ref{IjLemma}}, and the proposition follows after straightforward algebraic manipulations.
\end{proof}
\end{lemma}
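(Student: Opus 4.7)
The strategy will closely parallel the proof of \refLemma{IndStepA}: substitute both $\Lambda = (b+2, b+2)$ and $\lambda = (b+2, b)$ into the explicit Hironaka expansion \eqref{FPolyEqn}, identify the pairs $(c,d)$ for which $\prod_j I_j((\sm{c\\d}), \cdot)$ actually differs between $\Lambda$ and $\lambda$, and carry out the resulting finite algebraic simplification using the table in \refLemma{IjLemma}.

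The first step is to pin down which summands cancel. The transposes $\lambda'$ and $\Lambda'$ agree in every index $\leq b$, and likewise $\tilde\lambda'$ and $\tilde\Lambda'$ agree in every index $\leq b+1$; the values start to differ only because $\lambda$ has a second coordinate $b$ while $\Lambda$ has $b+2$. Since $I_j((\sm{c\\d}), \cdot)$ depends only on $\mu'_j, \mu'_{j+1}, \tilde\lambda'_j, \tilde\lambda'_{j+1}$, and since for $c \leq b$ all the relevant indices of $\tilde\lambda'$ coincide with those of $\tilde\Lambda'$, every summand with $c \leq b$ cancels in $F(\Lambda) - F(\lambda)$. The remaining contributions therefore come from:
\begin{compactenum}[(a)]
\item pairs $(c,d)$ with $c \in \{b+1,b+2,b+3\}$ and $d \leq b+1$, where the $I_j$-products are different in $\lambda$ and $\Lambda$ but both contribute;
\item pairs $(c, b+2)$ with $c \in \{b+2, b+3\}$, which appear only in $F(\Lambda)$ because the inner sum for $\lambda$ truncates at $d \leq b+1$;
\item the top pair $(c,d) = (b+3, b+3)$, which also appears only in $F(\Lambda)$.
\end{compactenum}

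For each of these finitely many pairs I would read off the products $\prod_j I_j$ from \refLemma{IjLemma}. Case (iv) of that lemma is the workhorse: for $\lambda$, the value $c = b+1$ is exactly $\beta + 1$, and for $\Lambda$, the values $c = b+2, b+3$ are $\beta$ and $\beta+1$ respectively, so one has to be attentive to the edge subcases $j = \beta + 1 = \alpha + 1$ versus $j = \beta+1 < \alpha + 1$ (they differ by a factor of $p^{-1}$). After plugging the resulting constants back into \eqref{FPolyEqn}, the inner sum over $d$ becomes a short geometric series in $(-p)^{-2d + \epsilon_d} X^d$, which telescopes to a closed expression in $p$ and $X$.

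The main obstacle is bookkeeping rather than any conceptual difficulty: one must keep track of the correct subcase of \refLemma{IjLemma} for every surviving $(c,d)$, handle the endpoint subtleties carefully, and then recombine the geometric sums into the advertised compact form. The appearance of the common factor $\tfrac{X^{b+1}(X+p)(X-1)}{X-p}$ is not surprising given the analogous factor in \refLemma{IndStepA} — both lemmas bump a single diagonal exponent of $T$ by $2$ — so I would aim to factor this out from the total as a final check that the algebra has been carried out correctly.
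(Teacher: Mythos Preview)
Your proposal is correct and follows essentially the same approach as the paper: both identify that the summands with $c \leq b$ cancel, separate the remaining contributions into the same three groups of surviving $(c,d)$-pairs, and then invoke \refLemma{IjLemma} followed by routine algebraic simplification. Your write-up is slightly more explicit about why the cancellation for $c \leq b$ holds and about which subcases of \refLemma{IjLemma} are in play, but the argument is the same.
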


\begin{proof}[Proof of  \refProp{RepDensProp}]
First, we note that the formulas for $F(S, \Id_2; X)$ and $F(S, p \Id_2; X)$, which correspond to the cases $(a, b) = (0,0)$ and $(a, b) = (1,1)$ respectively, can be verified directly via \eqref{FPolyEqn}, proving \emph{(i)}. 

Next,  for notational convenience set
\[ F(r, s) := F\left(S,  (\begin{smallmatrix}p^{r} & \\  & p^{s} \end{smallmatrix}); X \right), \]
where as usual, $r \geq s \geq 0$, and $r + s$ is even.
By applying  \refLemma{IndStepA} and 
\refLemma{IndStepB}
in sequence, we have that for any $r \geq 0$, 
\begin{align*}
F( r+2, r+2)  \ - \ F(r, r) \ = \ & \frac{X^{r+1} (X+p)(X-1)}{X-p} \\
& \qquad \times \Big\{ p^{r+1}(1-p)( pX + 1) \ +\ X^{r+1}(1+X^2)(p-p^{-1}X)  \Big\},
\end{align*}
which upon repeated application yields the formula
\begin{align*}
 F(b, b) \  =& \  F(\epsilon, \epsilon) + \sum_{i=1}^{(b-\epsilon) / 2} F(2i+\epsilon, 2i+\epsilon) - F(2i-2+\epsilon, 2i-2 + \epsilon) \\
=&  \ F({\epsilon}, {\epsilon})  \  + \  \frac{(X+p)(X-1)}{X-p} \Bigg[ (pX)(1-p) \frac{ (pX)^b - (pX)^{\epsilon} }{pX - 1}   \\
 & \qquad \qquad \qquad\qquad\qquad \qquad \qquad+ X^2 (p - p^{-1}X)  \frac{X^{2b} - X^{2 \epsilon}}{X^2-1} \Bigg].
 \end{align*}
On the other hand, for $a \geq b \geq 0$ with $a+b$ even, the repeated application of \refLemma{IndStepA} yields the relation
\[ F(a,b) = F(b,b) + \frac{(X+p)(X-1)}{X-p} \Bigg[ -p^{b+1}(X-1) + p X^{b+1} - p^{-1} X^{b+2} \Bigg] \cdot \frac{X^{a+1} - X^{b+1}}{X^2-1},\]
which then implies the proposition after a little straightforward algebra.
\end{proof}

%
The motivation for our calculations so far is to facilitate computing the derivative
\[ \alpha'(S,T) \ := \ - \left[ \frac{\partial}{\partial X} F(S, T; X)\right]_{X = 1} \]
\begin{corollary}  \label{repDenFinalCor}
Let $T \in \Herm_2(o_{k,p})$ such that $T \sim \diag(p^a, p^b)$, where $a \geq b \geq 0$ and $a+b$ is even. Then
\[  \frac{p}{(p+1)^2}  \Big[  \alpha' \big(  (\begin{smallmatrix} p & \\ & 1\end{smallmatrix}) ,  T \big) \ + \frac{p^2}{1-p^2} \alpha(Id_2, T ) \Big] = \frac{a+b}{2} - p \left( \frac{p^b - 1}{p-1} \right) \ =: \ \mu_p(T) . \]
\begin{proof}
We recall Nagaoka's formula \cite{Nag} for the representation density for $S = \Id_2$:
\[ F(\Id_2, T; X) \ = \ (1 + p^{-1} X) (1 - p^{-2} X) \sum_{\ell = 0}^b (pX)^{\ell} \sum_{k = 0}^{a + b - 2 \ell} (-X)^k .\]
Thus
\[ \alpha(\Id_2, T) \ = \ F(\Id_2, T ; X)_{X = 1} \ = \ (1+p^{-1})(1-p^{-2}) \frac{ p^{b+1} - 1 }{p-1} .\]
On the other hand, the derivative $\alpha'( ( \sm{ 1 & \\ & p} ) , T) $ can be computed directly from \refProp{RepDensProp}, and the proposition follows  via straightforward algebraic manipulation. 
\end{proof}
\end{corollary}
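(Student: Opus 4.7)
The plan is to reduce the identity to a direct polynomial computation using the two closed-form expressions already at our disposal. Since $\alpha(S_r, T) = F(S, T; (-p)^{-r})$, we have $\alpha(S, T) = F(S, T; 1)$ and $\alpha'(S, T) = -F'(S, T; 1)$, so the entire statement becomes a polynomial identity in $p$ (parametrized by $a, b, \epsilon$) to be verified at $X = 1$.

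The first key observation is that every summand on the right-hand side of \refProp{RepDensProp}(ii) carries an explicit $(X - 1)$ factor: the piece $F_\epsilon(X) = \frac{(1 - X)(X + p)}{p}(X^2 - (p^2 - p)X + 1)^\epsilon$ by part (i), and the bracketed correction via its prefactor $\frac{(X - 1)(X + p)}{X - p}$. Hence $F(S, T; 1) = 0$, giving $\alpha(S, T) = 0$; writing $F(S, T; X) = (X - 1)\, G(X)$ we obtain $\alpha'(S, T) = -G(1)$. Computing $G(1)$ amounts to dividing out $(X - 1)$ and substituting; the only sub-expressions needing L'Hopital are $\frac{X^{2b} - X^{2\epsilon}}{X^2 - 1}\big|_{X = 1} = b - \epsilon$ and $\frac{X^{a + 1} - X^{b + 1}}{X^2 - 1}\big|_{X = 1} = \tfrac{a - b}{2}$, while $\frac{(pX)^b - (pX)^\epsilon}{pX - 1}$ is regular at $X = 1$ with value $\frac{p^b - p^\epsilon}{p - 1}$. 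This yields a closed-form expression for $G(1)$ as a polynomial in $p, p^b, p^\epsilon$ and the integers $a, b, \epsilon$.

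The second input is Nagaoka's formula $F(\Id_2, T; X) = (1 + p^{-1} X)(1 - p^{-2} X)\sum_{\ell = 0}^{b} (pX)^\ell \sum_{k = 0}^{a + b - 2\ell} (-X)^k$. At $X = 1$ each inner alternating sum equals $1$ (because $a + b - 2\ell$ is even, being a difference of even integers), and the outer geometric sum collapses to $\frac{p^{b + 1} - 1}{p - 1}$, giving $\alpha(\Id_2, T) = (1 + p^{-1})(1 - p^{-2})\, \frac{p^{b + 1} - 1}{p - 1}$.

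The final step is to substitute both values into the left-hand side of the corollary and simplify. The weight $\frac{p^2}{1 - p^2}$ in front of $\alpha(\Id_2, T)$ is precisely what is needed to clear the $(1 - p^{-2})$ from Nagaoka and combine with the parity-dependent piece of $G(1)$. The main obstacle will be purely this symbolic bookkeeping. Since $\epsilon$ is determined by $b \bmod 2$ but both parities can occur, I would verify the two cases in parallel: in each case, group terms by power of $p$, observe that the $p^\epsilon$-contributions cancel across the two summands, that the geometric-in-$p^b$ pieces assemble into $-p \cdot \frac{p^b - 1}{p - 1}$, and that the linear contributions $b - \epsilon$ and $\tfrac{a - b}{2}$ combine into $\tfrac{a + b}{2}$. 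Both cases yield the same final answer, confirming the uniform identity $\mu_p(T) = \frac{a + b}{2} - p\, \frac{p^b - 1}{p - 1}$.
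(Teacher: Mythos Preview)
Your proposal is correct and follows essentially the same approach as the paper: compute $\alpha(\Id_2,T)$ via Nagaoka's formula and $\alpha'(S,T)$ by differentiating the closed form of \refProp{RepDensProp} at $X=1$, then combine. The paper's own proof simply asserts that the latter computation and the subsequent simplification are ``straightforward algebraic manipulation''; you have spelled out concretely how that manipulation goes (factoring out $(X-1)$, evaluating the resulting $G(1)$ via the indicated limits, and tracking the $\epsilon$-cases).
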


\section{Global cycles and Eisenstein series}

In this section, we turn to global aspects: we describe the Shimura varieties of interest and their global cycles, the construction of the relevant Eisenstein series, and prove our main theorem relating the two. Our presentation and approach is closely modelled on the account given by Kudla and Rapoport in \cite{KRunnglob}: we shall refer freely to the results therein and content ourselves in the present work to describing the necessary modifications to their arguments as the need arises.

\subsection{Preliminaries on Hermitian spaces} {\ \\}

Here we recall some basic notions about Hermitian spaces. Let $V$ be a Hermitian space over $k$ of signature $(r,s)$. To every rational place $\ell \leq \infty$, there is an associated  \emph{invariant}
\[ \inv_{\ell}(V) \ := \ \left( \det(V), \ \Delta \right)_{\ell}  \ = \ \begin{cases} 1, & \text{if }  \det(V) \in N(k_{\ell}^{\times}) \\ -1, & \text{if }  \det(V) \notin N(k_{\ell}^{\times}). \end{cases}\]
where $\det(V) = \det( (\mathbf v, \mathbf v) ) \in \Q_{\ell}^{\times}  / N(k_{\ell}^{\times})$ is the determinant of the matrix of inner products of any basis $\mathbf v = \{ v_1, \dots, v_{r+s} \}$ of $V$.
In particular, $\inv_{\infty}(V) = (-1)^{ s}$ and if $\ell$ is split, then $\inv_{\ell}(V) = 1$. These invariants satisfy the product formula
\[ 1 \ = \ \prod_{\ell \leq \infty} \ \inv_{\ell}(V). \]
Using the same definition, we may also define the  local invariant $\inv_{\ell}(V_{\ell}) = (\det V_{\ell}, \Delta)_{\ell}$ Hermitian space $V_{\ell}$ over $k_{\ell}$. When $\ell$ is a finite prime, two local Hermitian vector spaces are isometric if and only if they have the same dimension and their invariants are equal. If ${\ell} = \R$, there is a unique isometry class for each signature. 

Suppose that we are given a collection of signs $(a_p)_{p \leq \infty}$, almost all of which are equal to 1, and satisfying the product formula $\prod_{p \leq \infty} a_p = 1$. Then for any pair of integers $(r, s)$ such that $ (-1)^s  = a_{\infty}$,  there exists a hermitian space $V$ over $k$ of signature $(r,s)$ such that $\inv_{\ell}(V) = a_{\ell}$ for all $\ell$, and furthermore $V$ is unique up to isometry; in other words, the Hasse principle holds for Hermitian vector spaces. 

Finally, let $L$ be an $o_k$-Hermitian lattice, i.e. a projective $o_k$-module of finite rank equipped with an $o_k$-Hermitian form, and set $V = L \otimes_{\Z} \Q$. The \emph{genus} $[L]$ is the set of isomorphism classes of lattices $M \subset V$ such that 
\[ M \otimes_{\Z} \widehat{\Z} \ \simeq \ L \otimes_{\Z} \widehat{\Z} \]
as Hermitian $\widehat{o_k}$-modules. In other words, $M \in [L]$ if and only if $M_{\Q} = V$ and for every finite prime $\ell$, there exists an element $g_{\ell} \in U(V_{\ell})$ such that $g_{\ell}(M_{\ell}) = L_{\ell}$. 

Suppose $\ell \neq 2$ and $\Lambda$ is a self-dual $o_{k,\ell}$-lattice. Then by \cite{Jac}, the set of self-dual lattices in $V_{\ell} = \Lambda_{\Q_{\ell}}$ forms a single $U(V_{\ell})$-orbit. Furthermore if $\ell$ is inert, then the existence of a self-dual lattice forces $\inv_{\ell}(V_{\ell}) = 1$. 

When $\ell\neq 2$ is an inert prime, we say an $o_{k, \ell}$ lattice $\Lambda$ is \emph{almost self-dual} if $\Lambda^{\#} / \Lambda \simeq \F_{\ell^2}$, where $\Lambda^{\#}$ is the dual lattice.  In this case, $\inv(\Lambda_{\Q_{\ell}} ) = -1$, and the set of almost self-dual lattices in $\Lambda_{\Q_{\ell}}$ again forms a single orbit under the action of $U(\Lambda_{\Q_{\ell}})$. 
\subsection{Global moduli problems and $p$-adic uniformizations} \label{GlobGeomSec}{ \ \\}

Fix a squarefree integer $d$ whose prime factors are all inert in $k$. We define the moduli space of almost-principally-polarized abelian surfaces as follows:

\begin{definition}  \label{Md(1,1)Def}
 Let $\mathrm M^d_{(1,1)}$ denote the Deligne-Mumford stack over $\Spec(o_k)$ defined by the following moduli problem: for a scheme $S$ over $\Spec(o_k)$, the points $\mathrm M^d_{(1,1)}(S)$ parametrizes the category of tuples $\underline A = (A, i_A, \lambda_A)$, where
\begin{compactenum}[(i)]
\item $A$ is an abelian surface over $S$;
\item $i_A \colon o_k \to \End(A)$ is an $o_k$-action satisfying the following \emph{signature $(1,1)$ condition}:  on  (the locally free $\mathcal O_S$-module) $\Lie(A)$, the induced action has characteristic polynomial
\begin{equation} \label{sig11Eqn}
 \det( T \ - i_A(a)|_{\Lie(A)} ) \ = \ (T - a) (T - a')  \ \in \ \mathcal O_S [ T] \qquad \text{for all } a \in o_k;
\end{equation}
\item and finally, $\lambda_A$ is a polarization such that the induced Rosati involution ${}^*$ satisfies 
\[ i_A(a)^* = i_A(a'). \]
 In addition, we require that
\[  \ker(\lambda_A) \subset A[d], \qquad \text{and} \qquad |\ker(\lambda_A)| = d^2.    \]
\end{compactenum}
\end{definition}

\begin{proposition} $\mathrm M^d_{(1,1)}$ is flat over $\Spec(o_k)$ and smooth over $\Spec o_k[ (d \cdot \Delta )^{-1}]$.
\begin{proof}  This follows from combining the results of \cite[\S 2]{KRunnglob}, for primes away from $d$,  with \cite{KRDrinfeld} for those primes dividing $d$.
\end{proof}
\end{proposition}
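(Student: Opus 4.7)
The plan is to verify both assertions locally at each prime $\mathfrak{p}$ of $o_k$, using the standard device of the local model diagram. Flatness and smoothness are local properties on $\mathrm M^d_{(1,1)}$, and by Grothendieck-Messing / Serre-Tate theory the deformation theory of a point $\underline A \in \mathrm M^d_{(1,1)}(\F)$ is controlled by the deformation theory of its $p$-divisible group $A[p^{\infty}]$ together with the induced $o_k$-action and polarization. Thus it suffices to identify, at each prime $\ell$ sitting under $\mathfrak{p}$, the local moduli problem of $p$-divisible groups with one that is already known in the literature.

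First I would treat primes $\ell \nmid d$. Since $\ker(\lambda_A) \subset A[d]$ and $d \in o_{k,\mathfrak{p}}^{\times}$, the restriction of $\lambda_A$ to $A[\ell^{\infty}]$ is a principal polarization compatible with the $o_k$-action in the sense of \cite{KRunnglob}. The local moduli problem therefore coincides with the signature $(1,1)$ principally polarized moduli problem studied in \cite[\S 2]{KRunnglob}. In \emph{loc.\ cit.}, this moduli problem is shown to be smooth over $\Spec(o_{k,\mathfrak{p}})$ when $\ell$ is unramified (i.e.\ $\ell \nmid \Delta$), and flat over $\Spec(o_{k,\mathfrak{p}})$ when $\ell \mid \Delta$ via an explicit local model computation. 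This handles all primes away from $d$, and gives smoothness away from $d \cdot \Delta$.

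Next, I would address the primes $\ell \mid d$, which are automatically inert in $k$ since the prime factors of $d$ are inert by assumption. As $d$ is squarefree, the $\ell$-primary part of $\ker(\lambda_A)$ has order $\ell^2$ and is annihilated by $\ell$. Consequently the $p$-divisible group $A[\ell^{\infty}]$, together with the $o_{k,\mathfrak{p}}$-action and $\lambda_A$, is an almost-principally-polarized $p$-divisible group of signature $(1,1)$ in the precise sense of \refDef{DrinfeldUHPDef} (after base change to $W$). The local model diagram therefore identifies the completed local ring of $\mathrm M^d_{(1,1)}$ at a closed point of characteristic $\ell$ with an étale neighbourhood of the corresponding point on $\D$. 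By \cite{KRDrinfeld}, the formal scheme $\D$ is regular of relative dimension one over $\Spf(W)$, hence in particular flat over $\Spf(W)$, and this flatness descends to flatness of $\mathrm M^d_{(1,1)}$ over $\Spec(o_{k,\mathfrak{p}})$.

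Assembling the local analyses yields flatness over $\Spec(o_k)$ and smoothness over $\Spec o_k[(d\Delta)^{-1}]$, as required. The main technical obstacle is simply setting up the local model diagram rigorously so that the étale-local identifications above actually transport flatness/smoothness from the local to the global moduli problem; this is routine in the principal polarization case (following \cite{KRunnglob}), and for primes dividing $d$ it is the content of the local model construction of \cite{KRDrinfeld}. No genuinely new verification is required — the work done in the quoted references covers each local piece exactly.
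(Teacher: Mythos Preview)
Your proposal is correct and follows essentially the same approach as the paper: both cite \cite[\S 2]{KRunnglob} for primes away from $d$ and \cite{KRDrinfeld} for primes dividing $d$, with your version simply making explicit the Serre--Tate / local model reduction that the paper's one-line proof leaves implicit.
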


We also  set $\mathcal E$ to be the DM stack over $\Spec(o_k)$ that parametrizes principally polarized elliptic curves with multiplication by $o_k$. More precisely, for a scheme $S / o_k$, the points $\mathcal E(S)$ parametrize tuples $\underline E = (E, i_E, \lambda_E)$, where 
\begin{compactenum}[(i)]
	\item $E/S$ is an elliptic curve;
	\item  $i_E\colon o_k \to \End(E)$ satisfies the \emph{signature (1,0) condition}: concretely, this means that  on $\Lie(E)$, 
		\[ i_E(a)|_{\Lie(E)} \ = \ \tau_S (a) \qquad \text{for all } a \in o_k\]
	where $\tau_S \colon o_k \to \mathcal O_S$ is the structural morphism; and
 	\item $\lambda_E$ is a principal polarization whose corresponding Rosati involution induces Galois conjugation on $i_E(o_k)$. 
\end{compactenum}
	This is the stack denoted by $\mathcal M(1,0) ^{naive}$ in the notation of \cite{KRunnglob}; in particular, it is proper over $\Spec(o_k)$ of relative dimension 0.

Finally, we set
\[ \mathcal M \ := \ \mathcal E \ \times_{\Spec o_k} \ \mathrm M^d_{(1,1)}. \] 

Next, we describe the \emph{Kudla-Rapoport} cycles on $\mathcal M$, as introduced in \cite[\S 2]{KRunnglob}.
Given a scheme $S / o_k$ and a point $(\underline E, \underline A) = (E, i_E, \lambda_E, A, i_A, \lambda_A) \in \M(S)$,  the space of \emph{special homomorphisms}
\[  \Hom_{o_k, S}(E, A) \]
can be equipped with a positive-definite hermitian form, defined by the formula
\begin{equation} \label{SpecCycleHermFormEqn}
 (x, y) \ := \ \lambda_E^{-1} \circ y^{\vee} \circ \lambda_A \circ x \ \in \ \End_{o_k}(E) \ \simeq \ o_k.
\end{equation}

\begin{definition} (i) Let $m \in \Z_{>0}$. We define the special cycle $\Zed(m)$ to be the moduli space over $\Spec o_k$ whose $S$ points parametrize triples
\[ \Zed(m) (S) \ = \ \left\{ (\underline E, \, \underline A ; \, y ) \ | \ (\underline E, \, \underline A) \in \M(S) \ \text{ and } y \in\Hom_{o_k, S}(E, A) \text{ with } (y,y) = m \right\} \]

(ii) Suppose $T \in \Herm_2(o_k)$. We define $\Zed(T)$ to be the moduli space over $\Spec o_k$ whose $S$ points parametrize tuples
\[ \Zed(T) (S) \ = \ \left\{ (\underline E, \,  \underline A ; \, \mathbf y ) \ | \ (\underline E,  \, \underline A) \in \M(S) \ \text{ and } \  \mathbf y \in \Hom_{o_k, S}(E, A)^2 \text{ with }  (\mathbf y, \, \mathbf y) =  T \right\}, \]
where, for $ \mathbf y = (y_1, y_2)$, the matrix $(\mathbf y, \mathbf y)$ is the matrix of inner products
\[ (\mathbf y, \mathbf y) \ := \ \begin{pmatrix} (y_1, y_1) & (y_1, y_2) \\  (y_2, y_1) &  (y_2, y_2)  \end{pmatrix} \ \in \ \Herm_2(o_k) .\]

\end{definition}
Both moduli problems are represented by DM stacks. Furthermore the natural forgetful maps to $\M$ are finite and unramified, cf. \cite[Proposition 2.9]{KRunnglob}, and so their images can be viewed as cycles on $\M$. In the sequel, we shall abuse notation and use the symbols $\Zed(m)$ and $\Zed(T)$ to denote both the representing stacks and the corresponding cycles on $\M$, and hope that the intended meaning can be inferred from the context.

The aim of this section is to compute the \emph{arithmetic degree} of a cycle $\Zed(T)$, which we define as follows: suppose 
\[ T \ = \ \begin{pmatrix} m_1 & a \\ a' & m_2 \end{pmatrix} \  \in  \ \mathrm{Herm}_2(o_k), \]
where $m_1, m_2 \in \Z_{>0}$. A glance at the definitions above reveals that 
\[ \Zed(m_1) \ \times_{\M} \ \Zed(m_2) \  = \ \coprod_{T'  = (\begin{smallmatrix} m_1 & * \\ * & m_2 \end{smallmatrix} ) } \ \Zed(T') \ \supset \Zed(T).\]
Suppose $T$ is positive definite. As we shall shortly see (cf.\ \refLemma{ZSuppLemma}), we have $\Zed(T)_{\Q} = \emptyset$, and so $\Zed(T)$ is supported in the fibres of finitely many finite primes. We then define the arithmetic degree in this setting to be the Serre intersection multiplicity 
\[ \widehat\deg \ \Zed(T) \ := \ \sum_{\lie p \subset o_k} \ \chi \left( \Zed(T)_{\lie p},  \  \mathcal O_{\Zed(m_1)} \ \otimes_{\mathcal O_{\mathcal M}}^{\mathbb L}  \ \mathcal O_{ \Zed(m_2) }\right) \ \log(N(\lie p))\]
of $\Zed(m_1) \times \Zed(m_2)$ in $\Zed(T)$.  

We begin by describing convenient decompositions of the space $\mathcal M$ and the special cycles, in terms of genera of Hermitian lattices. 
\begin{definition} 
\begin{compactenum}[1.]
\item Let $\mathcal R_d$ denote the set of isomorphism classes of genera $[L]$, where $L$ is a Hermitian $o_k$-lattice such that
\begin{compactenum}[(i)]
\item $V := L \otimes_{\Z} \Q$ is a Hermitian space of signature $(1,1)$;
\item  $L_{\ell} \ := \ L \otimes_{\Z} \Z_{\ell}$ is a self-dual $o_{k, \ell}$-lattice for all $\ell \nmid d$;
\item and for every $\ell | d$, we require that $L_{\ell}$ is an almost self-dual lattice; equivalently, we require that $L_{\ell}$ is a maximal lattice in a non-split Hermitian space of dimension 2 over $k_{\ell}$. 
\end{compactenum}
\item Let $\mathcal R_0$ denote the set of isomorphism classes of genera $[L_0]$, where $L$ is a self-dual Hermitian $o_k$-lattice and $V_0 = L_{0, \Q}$  is of signature $(1,0)$. 
\end{compactenum}

\noindent Here, we consider two genera $[L]$ and $[L']$ to be isomorphic if and only if there are representatives $L \in [L]$ and $L' \in [L']$ such that $L \simeq L'$ as Hermitian $o_k$-modules.

\end{definition}

Suppose $p$ is an odd prime,  let $ \overline \F_p$ denote an algebraic closure of $\F_p$ and fix a trivialization 
\[ \widehat\Z^p(1) \ := \ \prod_{\ell \neq p} \mu_{\ell^{\infty}}(\overline \F_p) \ \simeq  \ \widehat\Z{}^p \]
of the prime-to-$p$ roots of unity over $\overline \F_p$. 
Given a geometric point $\underline A = (A, i_A, \lambda_A) \in \mathrm{M}^d_{(1,1)}(\overline\F_p)$, the prime-to-$p$ Tate module
\[ \mathtt{Ta}^{\, p}(A) \ := \ \prod_{\ell \neq p} \mathtt{Ta}_{\ell}(A) \]
is an $\widehat{o_k}{}^p$-module via the action induced by $i_A$. The polarization $\lambda_A$ determines a Weil pairing
\[ e_{\lambda} \colon \mathtt{Ta}^{\, p}(A) \ \times \ \mathtt{Ta}^{\, p}(A) \ \to \ \widehat\Z^p(1) \ \simeq \ \widehat\Z^p, \]
which in turn induces a Hermitian form $(\cdot, \cdot)_{\lambda_A}$ by the formula 
\[ (x,y)_{\lambda} \ = \ \frac12 \, \left( \,  e_{\lambda}\left(i(\sqrt{\Delta})( x), \,  y \right) \ + \ \sqrt{\Delta}\cdot e_{\lambda}(x,y) \, \right). \]

\begin{lemma} \label{MDecompLemma}
Suppose $p \neq 2$. 
	\begin{compactenum}[(i)]
	\item For every $\underline A \in \mathrm M^d_{(1,1)}(\overline{\F}_p)$, there is a unique genus $[L(A)] \in \mathcal R_d$ such that 
	\[ \Ta^p(A) \ \simeq \ \widehat{ L(A)}{}^p \ := \ L(A) \otimes_{\Z} \widehat{\Z}{}^p \]
	as Hermitian $\widehat {o_k}{}^p$-	modules.
	\item The genus $[L(A)]$ depends only on the connected component of $\mathrm M^d_{(1,1)}$ that contains the point $A$. 
	\item For each $[L] \in \mathcal R_d$, there exists a point $\underline A \in \mathrm M^d_{(1,1)}(\overline{\F}_p)$ with $[L(A)] \simeq [L]$. 
	\end{compactenum}
\begin{proof}

(i) To prove uniqueness, suppose $L, L' \in \mathcal R_d$ with $\widehat{L}{}^p \simeq \widehat{L'}{}^p$. By definition of $\mathcal R_d$, the localizations $L_p$ and $L'_p$ are self-dual lattices when $p \nmid d$, and are identified with the unique maximal lattice in the two-dimensional non-split hermitian space over $k_p$ when $p|d$. In either case, they are isometric, and so $L$ and $L'$ lie in the same genus. 

The existence claim is proved in  \cite[Proposition 2.12]{KRunnglob} when $p$ is ramified, or when $p$ is inert and $p \nmid d$. When $p|d$, the claim follows from \cite[Proposition 3.5]{KRnewunif}, see also the proof of Theorem 6.1 of \emph{loc.\ cit.} The idea is roughly as follows: one shows that  there exists a point $\underline A' \in \mathrm M^d_{(1,1)}(\C)$ such that
\[ \Ta^p(A) \otimes_{\Z} \Q   \ \simeq \ \Ta^p(A') \otimes_{\Z} \Q \]
as Hermitian $k\otimes \A_f^p$-modules. Note that the homology group $V = H_1(A', \Q)$ is a hermitian space of signature $(1,1)$. We may then find a lattice $L  \subset V$ by identifying $L \otimes_{\Z} \widehat\Z{}^p$ with $\Ta^p(A)$, and insisting that $L_p$ is self-dual (resp. almost self-dual) when $p \nmid d$ (resp. $p|d$). One then needs to verify that the genus $[L]$ is independent of all choices, and $[L] \in \mathcal R_d$. 

(ii) This follows immediately from the proof of \cite[Proposition 2.12]{KRunnglob}.

(iii) This follows from a straightforward modification of the proof of \cite[Lemma 5.1]{KRunnglob}. 
\end{proof}

\end{lemma}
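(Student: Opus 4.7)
The plan is to treat the three parts separately, with part (i) being the main content and parts (ii) and (iii) then following by standard arguments. My approach will closely mirror \cite[Proposition 2.12]{KRunnglob} for primes away from $d$, with modifications at primes dividing $d$ drawn from \cite[Proposition 3.5]{KRnewunif}.

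For uniqueness in (i), suppose $L, L' \in \mathcal{R}_d$ satisfy $\widehat{L}{}^p \simeq \widehat{L'}{}^p$. At every prime $\ell \neq p$ the lattices $L_\ell$ and $L'_\ell$ are then isometric by hypothesis. At the prime $p$ itself, the defining conditions of $\mathcal{R}_d$ determine the local isometry class on the nose: when $p \nmid d$, both $L_p$ and $L'_p$ are self-dual, and the result of Jacobowitz recalled above shows that self-dual lattices in a fixed local Hermitian space form a single $U(V_p)$-orbit; when $p \mid d$, both are almost self-dual, and the analogous orbit statement applies. The existence of self-dual (resp.\ almost self-dual) lattices pins down $\inv_p(V_p)$ to be $+1$ (resp.\ $-1$), so the ambient Hermitian spaces match locally at $p$, and the Hasse principle for Hermitian spaces (together with the signature condition at infinity) assembles these local isomorphisms into a global isometry $L \simeq L'$.

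For the existence half of (i), the strategy is to produce a complex lift and then transport lattices. Concretely, I would first exhibit a point $\underline A' \in \mathrm{M}^d_{(1,1)}(\C)$ such that
\[
\Ta^p(A) \otimes_{\Z} \Q \ \simeq \ \Ta^p(A') \otimes_{\Z} \Q
\]
as Hermitian $k \otimes \A^p_f$-modules; the existence of such an $\underline A'$ follows by the same argument as in \cite[Prop.\ 2.12]{KRunnglob}, adapted with \cite[Prop.\ 3.5]{KRnewunif} when $p \mid d$. The Betti homology $V := H_1(A',\Q)$ is then a $k$-Hermitian space of signature $(1,1)$, and I would select the unique lattice $L \subset V$ characterized by $\widehat{L}{}^p = \Ta^p(A)$ together with the requirement that $L_p$ be self-dual (resp.\ almost self-dual) according to whether $p \nmid d$ or $p \mid d$. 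The orbit statement used in the uniqueness step ensures $L_p$ is well-defined up to isometry, and verifies that $[L] \in \mathcal{R}_d$; independence of the choice of $\underline A'$ then follows by applying uniqueness.

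Parts (ii) and (iii) follow more quickly. For (ii), $\Ta^p$ is a smooth $\widehat{o_k}{}^p$-Hermitian sheaf on $\mathrm{M}^d_{(1,1)}$, hence its stalks are locally constant in the isomorphism class along each connected component; the additional local datum at $p$, which fixes $[L(A)]$ completely, is determined intrinsically by the moduli conditions and is therefore also locally constant. For (iii), given $[L] \in \mathcal{R}_d$, one runs the construction of (i) in reverse: build a complex abelian surface whose Betti lattice realizes $[L]$, equipped with the $o_k$-action and polarization prescribed by $L$, then reduce modulo $p$; this follows \cite[Lemma 5.1]{KRunnglob} essentially verbatim. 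The main obstacle throughout is the existence argument for (i) in the case $p \mid d$, where $\lambda_A$ has non-trivial kernel of order $p^2$ and $L_p$ is only almost self-dual; here one must check compatibility between the almost-self-dual condition and the prescribed kernel structure of $\lambda_A$, which is precisely the content that \cite[Prop.\ 3.5]{KRnewunif} supplies.
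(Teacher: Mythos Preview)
Your approach is essentially identical to the paper's: uniqueness via the local determination of $L_p$ by the $\mathcal R_d$ conditions, existence via a complex lift and transport of lattices citing \cite[Prop.~2.12]{KRunnglob} and \cite[Prop.~3.5]{KRnewunif}, and parts (ii)--(iii) by reference to the same sources. One small overstatement to fix: in the uniqueness argument you conclude with ``a global isometry $L \simeq L'$'' via the Hasse principle, but the Hasse principle applies to the ambient Hermitian \emph{spaces}, not to lattices --- locally isometric lattices need not be globally isometric, which is precisely why genera exist. Fortunately the lemma only asks for $[L] = [L']$ as genera, and your argument already establishes that $L_\ell \simeq L'_\ell$ for every finite $\ell$, which is exactly the definition of lying in the same genus; the Hasse-principle sentence should simply be dropped.
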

Similar assertions hold for the stack $\mathcal E$: for each geometric point $\underline E \in \mathcal E(\overline{\F}_p)$, there is a unique $[L_0(E)] \in \mathcal R_0$ such that 
\[ {\Ta}^p(E) \ \simeq \ \widehat{L_0(E)}{}^p \]
which depends only on the connected component containing $\underline E$,
 and every element of $\mathcal R_0$ appears in this way.

Our next task is to apply the \emph{$p$-adic uniformizations} of Rapoport and Zink, which relate $\M = \mathcal E \times \mathrm{M}_{(1,1)}^d$ and the special cycles to the moduli spaces of $p$-divisible groups that appeared in \refSec{DrinfeldSec}. In the following, we fix a prime $p|d$. Let $\widehat{\mathrm M}$ denote the formal completion of $\mathrm{M}_{(1,1)}^d$ along its fibre at $p$.
By \refLemma{MDecompLemma},  we have a decomposition
\begin{equation} \label{MDecompEqn}
 \widehat{\mathrm M} \ = \ \coprod_{[L] \in \mathcal R_d} \ \widehat{\mathrm M}{}^{ [L]} 
\end{equation}
into components that are characterized by the property that for any geometric point $ \underline A \in \widehat{\mathrm M}{}^{[L]}(\overline{\F}_p)$, there exists an isomorphism
$\mathrm{Ta}^p(A)  \simeq   \widehat{L}{}^p$ of $\widehat{o_k}{}^p$-hermitian modules. 

Fix a lattice $L \in \mathcal R_d$ as above, and set $V = L \otimes_{\Z} \Q$. Let $K^p = \mathrm{Stab}(\widehat{L}{}^p)  \subset U(V)(\A_f^p)$ denote the stabilizer of $\widehat L{}^p$.

Next, we let $V'$ be the hermitian space of dimension two over $k$ whose invariants differ from those of $V$ at exactly $p$ and $\infty$. 
Fixing an isomorphism
\[ V' \otimes_{\Q} \A_f^p \ \simeq \ V \otimes_{\Q} \A_f^p \]
induces an embedding
\[ U(V')(\Q) \ \hookrightarrow  \ U(V')(\A_f^p) \ \simeq  \ U(V)(\A_f^p), \] 
and $U(V')(\Q)$ acts on $U(V)(\A_f^p) / K^p$ by left-multiplication. 

Finally,  fix a geometric point $\underline{\mathbf  A} \in \mathrm M_{(1,1)}^d(\overline{\F}_p)$ with $[L(\mathbf A)] =  [L]$. When $p|d$, the corresponding $p$-divisible group
\[  \underline \X \ := \ \underline {\mathbf A}[p^{\infty}], \]
 together with the induced polarization and $o_{k,p}$-action,  serves as a base point for the Drinfeld upper half-plane $\D$, cf.\ \refDef{DrinfeldUHPDef}.
Let $I $ denote the algebraic group over $\Q$ whose rational points are given by
\[ I(\Q) =  (\End(\mathbf  A, i_{\mathbf A}, \lambda_{\mathbf A}) \otimes \Q)^{\times} . \]
It is a straightforward verification to prove that $I \simeq U(V')$. 
 In particular, $U(V')(\Q)$ acts on $\D$ via the identification
 \[ U(V')(\Q_p) \ \simeq \ (\End(\underline{\X})\otimes_{\Z_p} \Q_p)^{\times}. \]
 
With this notation in place, we obtain the following $p$-adic uniformization theorem, which is a special case of the  the general results of \cite{RZ}; details regarding this particular case can be found in \cite[Theorem 6.11]{KRnewunif}.
\begin{theorem} \label{pUnifMdThm}
Suppose $p|d$ and let $W = W(\overline \F_p)$ denote the ring of Witt vectors. Then there is an isomorphism of formal stacks
\begin{equation} \label{pUnifMd}
 \widehat{\mathrm M}^{[L]} \times_{ o_{k,p}} \Spf W \ \simeq \ \Big[  U(V')(\Q)  \Big\backslash   \D  \ \times \  \left( U(V)(\A_f^p) \big/ K^p \right)\Big].  
\end{equation}\qed
\end{theorem}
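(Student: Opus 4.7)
The plan is to apply the general $p$-adic uniformization theorem of Rapoport-Zink \cite{RZ}, specialized to the PEL datum defining $\mathrm M^d_{(1,1)}$, which reduces the statement to a verification that (a) the basic locus exhausts the entire $p$-fibre of $\widehat{\mathrm M}^{[L]}$ when $p \mid d$, (b) the local Rapoport-Zink space associated to the basic isogeny class is the Drinfeld upper half-plane $\D$ of Section~\ref{DrinfeldSec}, and (c) the relevant inner form of $U(V)$ controlling self-quasi-isogenies is precisely $U(V')$.

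First I would check (a): for $p \mid d$, the polarization $\lambda_A$ on any $\underline A \in \mathrm M^d_{(1,1)}(\overline{\F}_p)$ has kernel of order $p^2$ killed by $p$, so $\underline A[p^{\infty}]$ is an almost-principally-polarized $p$-divisible group of signature $(1,1)$ in the sense of the definition opening Section 2. By \refLemma{} (the supersingularity lemma in \S2), any such $p$-divisible group over $\overline{\F}_p$ is supersingular, and the triple $(\X,i_\X,\lambda_\X)$ is unique up to $o_{k,p}$-linear isogeny compatible with polarizations. Consequently every $\underline A$ in this fibre lies in a single isogeny class (once one also fixes the prime-to-$p$ level/genus class $[L]$), which is the basic locus; this is the input allowing Rapoport-Zink to apply in the simplest form, with no stratification on the right-hand side.

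Next I would assemble the right-hand side of \eqref{pUnifMd}. Fixing the base point $\underline{\mathbf A}$ with $\underline{\mathbf A}[p^{\infty}] \simeq \underline{\X}$ and $\mathrm{Ta}^{p}(\mathbf A) \simeq \widehat L{}^p$ identifies the local RZ-space with $\D$ (by \refDef{DrinfeldUHPDef}) and identifies the prime-to-$p$ side with $U(V)(\A_f^p)/K^p$, since the prime-to-$p$ level data is precisely an isomorphism $\mathrm{Ta}^{p}(A) \simeq \widehat L{}^p$ modulo the stabilizer $K^p$. A point of $\widehat{\mathrm M}^{[L]}(S)$ over $\overline{\F}_p$ thus corresponds to a pair (a quasi-isogeny of $p$-divisible groups $\rho_X$ as in \refDef{DrinfeldUHPDef}, a prime-to-$p$ level structure), modulo the ambiguity of all self-quasi-isogenies of $\underline{\mathbf A}$, i.e.\ modulo $I(\Q) := (\End(\mathbf A,i_{\mathbf A},\lambda_{\mathbf A}) \otimes \Q)^{\times}$ acting diagonally. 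The general RZ theorem upgrades this bijection on $\overline{\F}_p$-points to an isomorphism of formal stacks over $\Spf W$.

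The main obstacle is then (c), the identification $I \simeq U(V')$, which is the only step specific to our setting and not formal. I would compute the local invariants of $I$ at all places: at $\ell \ne p$ the endomorphism ring of $\mathrm{Ta}_\ell(\mathbf A)$ together with its hermitian form gives $I_\ell \simeq U(V)_\ell$; at $p$, the rational Dieudonn\'e module of $\underline{\X}$ is the non-split hermitian space over $k_p$ (since $\X$ is supersingular with almost-principal polarization, cf.\ the discussion of the invariants following \refDef{DrinfeldUHPDef}), so $\inv_p(I) = -\inv_p(V)$; and at $\infty$ the positivity of the Rosati involution makes $I_\infty$ compact, so $\inv_\infty(I) = -\inv_\infty(V)$. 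The product formula and the Hasse principle for hermitian spaces then force $I$ to be the unitary group of the hermitian space $V'$ differing from $V$ exactly at $p$ and $\infty$, which is our $U(V')$. Combining this with the verifications above gives \eqref{pUnifMd}, matching \cite[Theorem 6.11]{KRnewunif}.
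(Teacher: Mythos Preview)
The paper does not prove this theorem at all: it is stated with a \qed\ and attributed to the general Rapoport--Zink machinery \cite{RZ}, with the specific case deferred to \cite[Theorem 6.11]{KRnewunif}. Your proposal is therefore more, not less, than what the paper provides, and the overall strategy --- verify that the $p$-fibre is entirely basic, identify the local RZ space with $\D$, and compute the inner form $I$ --- is exactly the right one, and is indeed what lies behind the cited reference.

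One point in your step (c) deserves tightening. Your sentence ``the rational Dieudonn\'e module of $\underline{\X}$ is the non-split hermitian space over $k_p$ \dots\ so $\inv_p(I) = -\inv_p(V)$'' is imprecise and, read literally, points in the wrong direction: if the relevant hermitian space at $p$ were non-split, one would get $\inv_p = -1$, whereas you want $\inv_p(I) = +1$ (since $\inv_p(V) = -1$ for $p\mid d$). The clean way to see the correct sign is the one the paper itself uses just after the theorem: by \eqref{V'HomEqn} one has $V' \simeq \Hom_{o_k}(\mathbf E,\mathbf A)_{\Q}$, and localizing at $p$ gives $V'_p \simeq \Hom_{o_{k,p}}(\Y,\X)\otimes\Q_p = \V$, which the paper notes is \emph{split} (Remark 3.4 of \cite{San1}, recalled after the definition of $\V$). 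Hence $\inv_p(V') = +1 = -\inv_p(V)$, as required. With this correction your sketch is sound.
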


Completely analogous results hold for the stack $\mathcal E$: let $L_0 \in \mathcal R_0$ be a lattice, and $\widehat{\mathcal E}^{[L_0]}$ the corresponding component of the formal completion $\widehat{\mathcal E}$, in analogy with \eqref{MDecompEqn}. Fixing a geometric point $\underline{\mathbf  E} = ( \mathbf E, i_{\mathbf E}, \lambda_{\mathbf E}) \in \mathcal E(\F)$, 
we have isomorphisms  
\[ (\End(\mathbf E, i_{\mathbf E}, \lambda_{\mathbf E})\otimes_{\Z} \Q )^{\times} \ \simeq  \ k^{ 1} \ \simeq  \ U(V_0)(\Q),  \]
where $k^{ 1}$ is the group of norm-1 elements of $k$, and $V_0 := L_0 \otimes_{\Z} \Q$.
%

Let $\underline \Y = \underline{\mathbf E}[p^{\infty}]$ denote the  $p$-divisible group attached to $\mathbf E$, which comes equipped with the induced $o_{k,p}$-action and principal polarization, and which serves as the base point for the moduli space $\D_0$, cf.\ \refDef{D0Def};  the reader is reminded that $\D_0 \simeq \Spf W$.

By \cite[Theorem 5.5]{KRunnglob}, and in analogy with \refThm{pUnifMdThm}, 
\begin{equation} \label{pUnifE}
 \widehat{\mathcal E}^{[L_0]}  \times_{o_{k,p}} \Spf W \ \simeq \ \left[ U(V_0)(\Q) \ \Big\backslash \ \D_0 \ \times \ U(V_0)(\A_f^p)/ K_0^p \right]
\end{equation}
where $K_0^p \subset U(V_0)(\A_f^p)$ is the stabilizer of $\widehat{L_0}{}^p$. 

\begin{remark} Fix an isomorphism $V_0 \simeq k$, under which the lattice $L_0$ is identified with a fractional ideal $\lie{a}$, and such that the Hermitian form is given by
\[ (x, y) \ \mapsto \ \frac1{N(\lie a)} \, x \, y' , \qquad x, y \in k; \]
note that $L_0$ determines the class of $\lie a$ in the ideal class group $Cl(k)$. 
Under this identification, we have $U(V_0)(\A_f^p) \simeq (\A_{k,f}^p)^1$, and the stabilizer $K_0^p$ of $\widehat{L_0}{}^p$ is identified with $\widehat{o_k}{}^{p,1}$. 
Recalling that $\D_0 \simeq \Spf W$, 
\[\ \ \ \ \ \ \ \  \ \ \ \ \ \ \   \widehat{\mathcal E}^{[L_0]}  \times_{o_{k,p}} \Spf W \ \simeq \ \left[ k^1 \backslash \A_{k,f}^{p,1} / \widehat{o_k}{}^{p,1} \right] . \ \ \ \ \ \ \ \   \ \ \ \ \ \  \ \ \ \ \ \ \ \ \ \ \ \diamond\]
\end{remark}
 Let $\widehat{\M}$ denote the formal completion of $\M =  \mathcal E \times_{o_k} \mathrm M_{(1,1)}^d $ along its fibre at $p$. We have a decomposition
\[ \widehat{\M}  \ = \ \coprod_{\substack{[L_0] \in \mathcal R_0 \\ [L] \in \mathcal R_d  }} \widehat{\M}^{[L_0], [L]}\]
where $\widehat{\M}^{[L_0], [L]} := \widehat{\mathcal E}^{[L_0]} \times \widehat{\mathrm M}^{[L]} $. Combining \eqref{pUnifMd} and \eqref{pUnifE} yields the $p$-adic uniformization 
{ \small
\[ \widehat{\M}^{[L_0] , [L]} \times \Spf W \ \simeq \ \left[  U(V_0)(\Q) \times U(V')(\Q)  \Big\backslash  \  \left(\D_0   \times U(V_0)(\A_f^p) / K_0^p \right)  \times  \left(\D  \ \times \  U(V)(\A_f^p)/ K^p \right) \right] \]
}

We now turn to the $p$-adic uniformization of the special cycles. Suppose $T \in \mathrm{Herm}_2(o_k)$,  let $\widehat{\Zed}(T)$ be the formal completion along its fibre at $p$, and let
\[ \widehat{ \Zed}(T)^{[L_0], [L]} \ := \ \widehat{ \Zed}(T) \ \times_{\widehat\M} \ \widehat\M^{[L_0], [L]} \]
denote the component corresponding to the pair of genera $([L_0], [L])$. As before, we fix a pair of base points $(\underline{\mathbf E}, \underline{\mathbf A}) \in \widehat\M^{[L_0], [L]}(\F)$, with corresponding $p$-divisible groups $(\underline \Y, \underline \X)$. By considering local invariants, one can check that there is an isomorphism of hermitian spaces
\begin{equation} \label {V'HomEqn} 
V'  \ \simeq  \ \Hom_{o_k}(\mathbf E, \mathbf A)_{ \Q}, 
\end{equation}
where $V'$ is the hermitian space whose invariants differ from those of $V = L_{\Q}$ at exactly $p$ and $\infty$, and the hermitian form on $\Hom_{o_k}(\mathbf E, \mathbf A)_{ \Q}$ is given by \eqref{SpecCycleHermFormEqn}. 

Given an element $x \in V'$, we may use \eqref{V'HomEqn} and take the corresponding completions to obtain elements 
\[ \mathrm{Ta}^p(x)  \ \in \ \Hom_{k \otimes \A_f^p}(\mathrm{Ta}^p(\mathbf E)_{\Q}, \, \mathrm{Ta}^p(\mathbf A)_{\Q}) \qquad \text{and} \qquad x_p \ \in \ \Hom_{o_{k,p}}(\Y, \X)_{ \Q_p} \ =  \ \V . \]

The following proposition is proved in the same way as \cite[Proposition 6.3]{KRunnglob}.
\begin{proposition} \label{padicUnifCycleProp}
 Let $T \in \Herm_2(o_k)$, and as usual, suppose $p|d$. Then there is an isomorphism
\[ \widehat{\Zed} (T){}^{[L_0], [L]}  \times_{o_{k,p}} \Spf W\ \simeq \ \left[ U(V')(\Q) \times U(V_0)(\Q)  \Bigg\backslash  \coprod_{g} \ \coprod_{g_0} \  \coprod_{\mathbf x \in \Omega(T, g, g_0)}  Z(\mathbf x_p) \right] \]
where $g$ and $g_0$ range over $U(V)(\A_f^p) / K^p$ and $U(V_0)(\A_f^p) / K_0^p$ respectively, 
\[ \Omega(T, g, g_0) \ := \ \left\{ \mathbf x = [x_1, x_2] \in ( V')^2 \ | \ (\mathbf x, \mathbf x) = T \text{ and }  g^{-1} \circ \mathrm{Ta}^p(x_i) \circ g_0 \in \Hom(L_0, L) \otimes {\widehat \Z}^p \right\} , \]
and $Z(\mathbf x_p)$ is the local cycle corresponding to $\mathbf x_p = (x_{1,p}, x_{2,p})$ as in \refDef{locCyclesDef}. 
\qed
\end{proposition}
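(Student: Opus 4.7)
My plan is to follow the strategy of \cite[Proposition 6.3]{KRunnglob} closely, adapting it to the present setting where the polarization on $\mathrm M^d_{(1,1)}$ at primes $p | d$ is non-principal and the $p$-adic uniformization is via the Drinfeld upper half-plane rather than a smooth Rapoport--Zink space. The starting point is the uniformization of $\widehat\M^{[L_0],[L]}$ obtained by combining \refThm{pUnifMdThm} with \eqref{pUnifE}, which describes a geometric point as a quadruple $(\underline Y, \underline X, g, g_0)$ modulo the diagonal $U(V')(\Q) \times U(V_0)(\Q)$-action; here $\underline Y \in \D_0$ and $\underline X \in \D$ provide the $p$-divisible groups together with quasi-isogenies to the base points $\underline\Y, \underline\X$, while the cosets $g K^p$ and $g_0 K_0^p$ encode the prime-to-$p$ level structures identifying $\Ta^p(A)$ with $\widehat L{}^p$ and $\Ta^p(E)$ with $\widehat{L_0}{}^p$.

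The first step is to lift the cycle datum $\mathbf y = [y_1, y_2] \in \Hom_{o_k}(E, A)^2$ to a pair of global elements of $V'$. Via the identification \eqref{V'HomEqn}, the quasi-isogenies $\rho_X, \rho_Y$ and the level structures encoded by $g, g_0$ together determine, for each $y_i$, a rational element $x_i \in V' \simeq \Hom_{o_k}(\mathbf E, \mathbf A)_{\Q}$. Under the formula \eqref{SpecCycleHermFormEqn}, the condition $(\mathbf y, \mathbf y) = T$ becomes $(\mathbf x, \mathbf x) = T$ for the Hermitian form on $V'$. Conversely, any $\mathbf x \in (V')^2$ with this Gram matrix yields by the inverse construction a quasi-morphism $\mathbf y$, and the task reduces to detecting precisely when this quasi-morphism is an honest morphism of abelian schemes.

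This integrality question decouples prime-by-prime. At the prime $p$, the composition $\rho_X^{-1} \circ x_{i,p} \circ \rho_Y$ is \emph{a priori} only a quasi-isogeny of $p$-divisible groups over $\overline S$; the requirement that it lift to an actual morphism $Y \to X$ over $S$ is, by \refDef{locCyclesDef}, exactly the condition $(\underline Y, \underline X) \in Z(\mathbf x_p)$. Away from $p$, the Tate-module realization $\Ta^p(x_i)$ must carry the level-structure lattice $g_0 \widehat{L_0}{}^p$ into $g \widehat L{}^p$; unwinding the identifications yields the condition $g^{-1} \circ \Ta^p(x_i) \circ g_0 \in \Hom(L_0, L) \otimes \widehat\Z{}^p$, i.e.\ $\mathbf x \in \Omega(T, g, g_0)$.

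Assembling these conditions, a geometric point of $\widehat\Zed(T)^{[L_0],[L]}$ corresponds to a tuple $((\underline Y, \underline X), g, g_0, \mathbf x)$ with $\mathbf x \in \Omega(T, g, g_0)$ and $(\underline Y, \underline X) \in Z(\mathbf x_p)$, modulo the diagonal action of $U(V')(\Q) \times U(V_0)(\Q)$. The remaining work is to check that this action is compatible across all components: the $U(V')(\Q)$-action on $V'$ translates $\mathbf x$ while acting via its embedding $U(V')(\Q_p) \simeq (\End(\underline\X) \otimes \Q_p)^\times$ on $\D$, and symmetrically for $U(V_0)(\Q)$, so that the various stratifications assemble into a single well-defined quotient. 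I expect this compatibility bookkeeping---already the principal technical step in \cite[Proposition 6.3]{KRunnglob}---to be the main obstacle; however, once in place on $\widehat\M^{[L_0],[L]}$, the extension to the cycle $\widehat\Zed(T)$ is essentially formal, since both the local condition $(\underline Y, \underline X) \in Z(\mathbf x_p)$ and the $\widehat\Z{}^p$-integrality condition are manifestly equivariant for the relevant group actions.
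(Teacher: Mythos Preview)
Your proposal is correct and takes essentially the same approach as the paper, which simply refers the reader to \cite[Proposition 6.3]{KRunnglob} and provides no further details. Your outline is in fact more detailed than what the paper offers, but it tracks precisely the argument being cited: combine the $p$-adic uniformizations of $\widehat{\mathcal E}^{[L_0]}$ and $\widehat{\mathrm M}^{[L]}$, translate the special-homomorphism datum into a vector $\mathbf x \in (V')^2$, and separate the integrality condition into the local deformation condition at $p$ (giving $Z(\mathbf x_p)$) and the prime-to-$p$ Tate-module condition (giving $\Omega(T,g,g_0)$), all equivariantly for the $U(V')(\Q)\times U(V_0)(\Q)$-action.
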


Next, we collect some information regarding the support of a special cycle $\Zed(T)$. 
\begin{lemma} \label{ZSuppLemma}
Suppose $T \in \Herm_2(o_k)$ is positive definite, and set
\[  \Diff(T) := \{ \ell \nmid d \text{ inert}, \ \ord_{\ell} \det T \text{ odd } \} \ \coprod \   \{ \ell \mid d, \ \ord_{\ell} \det T \text{ even } \}.
 \] 
Also, let $V_T$ denote the vector space $k^2$ with Hermitian form given by the matrix $T$.
\begin{compactenum}[(i)]
\item The generic fibre $\Zed(T)_{\Q}$ is empty. 
\item Suppose $\Zed(T)(\overline{\F_v}) \neq \emptyset$. Then for all inert $\ell \neq v$, we have  $\inv_{\ell} V_T = -1$ if $\ell | d$ and $\inv_{\ell} V_T = 1$ if $\ell \nmid d$. 
\item If $\# \mathrm{Diff}(T) > 1$, then $\Zed(T)  = \emptyset$.
\item If $\Diff(T) = \{ \ell \}$ is a single inert prime, then (a) the support of $\Zed(T)$ is contained in the fibre  $ \M_{\ell}$  at $\ell$; and (b) we have a decomposition
\begin{equation} \label{ZedSuppDecompEqn}
 \widehat\Zed(T) = \coprod_{[L_0] \in \mathcal R_0} \coprod_{\substack { [L] \in \mathcal R_d \\ L \otimes \A_f^{\ell} \simeq V_T \otimes \A_f^{\ell}} } \widehat\Zed(T)^{[L_0], [L]}  .
\end{equation}

\end{compactenum}

\begin{proof} We argue as in \cite[Proposition 2.22]{KRunnglob}. Suppose $F$ is an algebraically closed field, and $(\underline E, \underline A, \mathbf x) \in \Zed(T)(F)$ is a geometric point. Since $T$ is non-degenarate, the pair $\mathbf x = [x_1, x_2]$ determines an isomorphism
\[ V_T \ \simeq \ \Hom_{o_k}(E, A)_{\Q} \]
of hermitian spaces. 

If $F = \C$, then we have an embedding
\[ V_T \  \simeq  \ \Hom_{o_k}(E, A)_{\Q} \ \hookrightarrow \Hom_{o_k}( H_1(E, \Q), H_1(A, \Q)) \simeq H_1(A, \Q) ,\]
where $H_1(A, \Q)$ is endowed with the unique Hermitian form $(\cdot, \cdot)$ such that the Riemann form $\la \cdot, \cdot \ra_{\lambda_A}$ induced by $\lambda_A$ satisfies
\[ \left\la i(\sqrt{\Delta}) x, \, y \right\ra_{\lambda_A} \ = \ \frac{1}{2} tr_{k/\Q} (x, \, y). \]
The signature of $H_1(A, \Q)$ is $(1,1)$ by the signature condition \eqref{locCyclesDef}, while the signature of $V_T$ is $(n,0)$ by assumption; hence we obtain a contradiction that proves \textit{(i)}. 

Suppose next that $F$ has characteristic $v>0$, so that
\begin{equation} \label{VTsimEqn}
 V_T(\A_f^v) \ \simeq \  \Hom_{\A_{k,f}^v}(\mathrm{Ta}^v(E)_{\Q},  \ \mathrm{Ta}^v(A)_{\Q} ). 
\end{equation} 
Let $\ell \neq v$ be an inert prime. Then if $\ell \nmid d$, the space $V_{T, \ell}$ contains a self-dual lattice and hence $\inv_{\ell} V_T = 1$; if $\ell | d$, then $V_{T,\ell}$ is 
the non-split hermitian space over $k_{\ell}$, and so $\inv_{\ell} V_T = -1$.  This proves \textit{(ii)}, from which \textit{(iii)} and the statement regarding the support of $\Zed(T) $ in \textit{(iv)} follow easily. 

Finally, it follows from \eqref{VTsimEqn} and the definition of the component $\widehat\Zed(T)^{[L_0], [L]}$  that there is a decomposition
\[ 
 \widehat\Zed(T) = \coprod_{[L_0] , [L] } \widehat\Zed(T)^{[L_0], [L]} 
\]
where the product is over genera $[L_0]$ and $[L]$ in $\mathcal R_0$ and $\mathcal R_d$ respectively, such that
\[ \Hom(L_0, L) \otimes_{\Z} \A_f^p \ \simeq \ V_T \otimes_{\Q}\A_f^p. \]
However, note that there is an isomorphism of $k$-Hermitian spaces
\[ \Hom(L_0, L)_{\Q}  \ \simeq \ L_{\Q}, \]
 since 
\[ \det \left( \Hom(L_0, L)_{\Q} \right) \ = \ \det (L_{0,\Q})^2  \det(L_{\Q}) \ \equiv \ \det(L_{\Q}) \mod N(k^{\times}). \]
Thus
\[  \Hom(L_0, L) \otimes_{\Z} \A_f^p \ \simeq V_T \otimes_{\Q}\A_f^p \ \ \iff \ \  L \otimes_{\Z} \A_f^p \ \simeq V_T \otimes_{\Q}\A_f^p, \]
and the second part of \emph{(iv)} follows immediately. 

\end{proof}
\end{lemma}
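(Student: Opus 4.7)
All four parts rely on the same device: a geometric point $(\underline E, \underline A, \mathbf{x}) \in \Zed(T)(F)$ yields, since $T$ is non-degenerate, an isometry $V_T \isomto \Hom_{o_k}(E, A)_\Q$ of hermitian $k$-spaces. The hermitian structure on the right is controlled by the polarizations $\lambda_E$ and $\lambda_A$, so comparing both sides place-by-place will either contradict the existence of the point or localize it.

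For (i), in characteristic zero $\Hom_{o_k}(E, A)_\Q$ embeds into $\Hom_{o_k}(H_1(E, \Q), H_1(A, \Q)) \simeq H_1(A, \Q)$, since $H_1(E, \Q)$ is free of rank one over $k$. The signature $(1,1)$ condition on $i_A$ makes $H_1(A, \Q)$, equipped with the hermitian form induced by the Riemann form of $\lambda_A$, a hermitian $k$-space of signature $(1,1)$; but $V_T$ is positive definite of rank two, a contradiction. For (ii), in characteristic $v > 0$, taking Tate modules at any prime $\ell \neq v$ gives an isometry $V_T \otimes \Q_\ell \simeq \Hom_{o_{k,\ell}}(\Ta_\ell E, \Ta_\ell A)_\Q$. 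When $\ell \nmid d$ both polarizations are principal at $\ell$, both Tate modules are self-dual hermitian lattices, and the $\Hom$-lattice is self-dual, forcing $\inv_\ell V_T = +1$. When $\ell \mid d$ the non-principal polarization of $A$ turns $\Ta_\ell A$ into an almost self-dual lattice sitting inside the non-split hermitian $k_\ell$-plane, so the $\Hom$-space is non-split, i.e.\ $\inv_\ell V_T = -1$.

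Parts (iii) and the support claim in (iv)(a) are then immediate by contraposition: the definition of $\Diff(T)$ is engineered so that for every $\ell \in \Diff(T)$, the invariant $\inv_\ell V_T$ is precisely the \emph{opposite} of the value forced by (ii) at $\ell \neq v$. Hence $\Zed(T)(\overline{\F_v}) \neq \emptyset$ can occur only if $v$ coincides with every element of $\Diff(T)$. If $|\Diff(T)| \geq 2$ this is impossible, and combined with (i) this gives $\Zed(T) = \emptyset$, proving (iii); if $\Diff(T) = \{\ell\}$, only the fibre over $\ell$ carries support, giving (iv)(a).

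For the decomposition \eqref{ZedSuppDecompEqn}, \refLemma{MDecompLemma} and its counterpart for $\mathcal E$ already give $\widehat\Zed(T) = \coprod_{[L_0], [L]} \widehat\Zed(T)^{[L_0], [L]}$, so the task is to identify which pairs $([L_0], [L])$ can contribute. At any geometric point of $\widehat\Zed(T)^{[L_0], [L]}$ the Tate module argument identifies $V_T \otimes \A_f^\ell$ isometrically with $\Hom(L_0, L) \otimes \widehat\Z^\ell$; a short determinant calculation using $\det \Hom(L_0, L)_\Q = N(\det L_{0, \Q}) \cdot \det L_\Q$ shows that $\Hom(L_0, L)_\Q \simeq L_\Q$ as hermitian $k$-spaces, so the condition collapses to $L \otimes \A_f^\ell \simeq V_T \otimes \A_f^\ell$; the $[L_0]$ index is unrestricted since $L_0$ has rank one and its local structure is fixed. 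The main subtlety is the consistent bookkeeping of hermitian forms under the Tate module isometry, particularly at primes dividing $d$ where the non-principal polarization changes the lattice type; the rest is formal.
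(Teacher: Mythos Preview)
Your proposal is correct and follows essentially the same approach as the paper's proof: both use the isometry $V_T \simeq \Hom_{o_k}(E,A)_\Q$ induced by a geometric point, argue via the signature of $H_1(A,\Q)$ for (i), via the lattice structure on Tate modules for (ii), deduce (iii) and (iv)(a) by contraposition, and reduce (iv)(b) to the observation that $\Hom(L_0,L)_\Q \simeq L_\Q$ via a determinant comparison modulo norms. The only cosmetic difference is that the paper writes the determinant relation as $\det(L_{0,\Q})^2 \cdot \det(L_\Q)$ while you write $N(\det L_{0,\Q}) \cdot \det L_\Q$; since $L_{0,\Q}$ has rank one with $\det L_{0,\Q} \in \Q^\times$, both expressions are norms and the conclusion is the same.
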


Let $T \in \mathrm{Herm}_2(o_k)$ be positive definite with $\Diff(T) = \{p\}$, where $p|d$. By the previous lemma, we may restrict our attention to lattices $L$ such that $L \otimes_{\Z} \A_f^p \simeq V_T \otimes_{\Q} \A_f^p$. 

Viewing $\widehat L^p = L \otimes_{\Z} \widehat\Z{}^p$  as an ad\`elic lattice in $V_T \otimes \A_f^p$, we define
 the Schwarz function 
\[ \varphi'^p_L \in \mathscr S( (V_T\otimes \A_f^p)^2) \ := \ \text{characteristic function of } ( \widehat L{}^p)^2.\]
Let $K'^p \subset U(V_T)(\A_f^p)$ be the stabilizer of $ \widehat{L}^p$, and write
\begin{equation}
 U(V_T)(\A_f^p) \ = \ \coprod_j \ U(V_T)(\Q) \ h_j \ K'^p, \qquad h_j \in U(V_T)(\A_f^p) .
\end{equation}
Finally, we define $\Gamma'_j := U(V_T)(\Q) \cap h_j K'^p h_j^{-1} $. 

\begin{theorem} 
\label{MainThmGeom}
Suppose $T \in \Herm_2(o_k)$ positive definite with $\Diff(T) = \{ p \}$ for $p|d$. Let $[L] \in \mathcal R_d$ with $L \otimes \A_f^p \simeq V_T \otimes \A_f^p$ and fix any $[L_0] \in \mathcal R_0$. Then, with notation as in the previous paragraph,
\[ \widehat\deg \ \Zed(T)^{[L_0], [L]} \ = \  \frac{h(k)}{|o_k^{\times}| \ 2^{o(\Delta) - 1}} \ \cdot \  \mu_p(T)  \ \cdot \ \left(  \sum_{ j}  \ \sum_{\substack{\mathbf  x \in (V_T)^2  \\ h(\mathbf x ) = T \\ \text{mod } \Gamma'_j}} \varphi'^p_L( h_j^{-1}\mathbf x) \right)  \ \cdot \ \log p^2 \]
where $o(\Delta)$ is the number of prime factors of the discriminant $\Delta $ of $k$, and $\mu_p(T)$ is the quantity defined in \eqref{mainLocGeomCorEqn}.

\begin{proof}
We may fix isomorphisms:
	\begin{equation} \label{isomsEqn}
		\Hom(L_0, L) \otimes \A_f^p \ \simeq \  L \otimes \A_f^p \  \simeq \ V_T \otimes \A_f^p
	\end{equation}
In particular, the invariants of $V_T$ differ from those of $V := L \otimes \Q$ at exactly $p$ and $\infty$. Thus $V' = V_T$ in the notation of \refProp{padicUnifCycleProp}, and so we have

 \[ \widehat{\Zed (T)}{}^{[L_0], [L]} \times \Spf W \ \simeq \ \left[ U(V_T)(\Q) \times U(V_0)(\Q)  \Bigg\backslash  \coprod_{g , g_0}  \  \coprod_{\mathbf x \in \Omega(T, g, g_0)}  Z(\mathbf x_p) \right] \]
where for $x \in V_T$, we let $x^p$ denote the image of $x$ in $\Hom(L_0, L) \otimes \A_f^p$ in \eqref{isomsEqn} above, and
\[ \Omega(T, g, g_0) \ := \ \left\{ \mathbf x = [x_1, x_2] \in ( V_T)^2 \ | \ (\mathbf x, \mathbf x) = T \text{ and }  g^{-1} \circ (x_i)^p \circ g_0 \in \Hom(L_0, L) \otimes {\widehat \Z}^p \right\}. \]
 For a given pair $\mathbf x = [x_1, x_2] \in \Omega(T, g, g_0)$, we have already computed the degree of the local cycle $Z(\mathbf x_p)$; indeed, \refCor{mainLocGeomCor} tells us that
 \[ \chi(\Zed(T)_p, \ \mathcal O_{Z(x_{1,p})} \otimes^{\mathbb L} \mathcal O_{Z(x_{2,p})} ) \  =  \  \la Z(x_{1,p}), \ Z(x_{2,p}) \ra  \ = \ \mu_p(T),\]
which in particular depends only on $T$. Thus
\[ \widehat \deg \ \Zed(T)^{[L_0], [L]} \ = \ \mu_p(T) \cdot \# \left[ U(V_T)(\Q) \times U(V_0)(\Q)  \Bigg\backslash  \coprod_{g , g_0}  \  \coprod_{\mathbf x \in \Omega(T, g, g_0)}  \left\{ \mathsf{pt} \right\}  \right] \cdot \log p^2  \]
where on the right, we need to compute the `stack cardinality'. 

Fixing momentarily an element $g_0 \in U(V_0)(\A_f^p) \ = \ (\A_{k,f}^p)^{\times, 1}$, we first compute the cardinality  
\begin{equation} \label{numNogoEqn}
\#  \left[ U(V_T)(\Q) \ \Big\backslash \coprod_{g \in U(V)(\A_f^p) / K^p}  \ \coprod_{x \in \Omega(T, g, g_0)} \ \{ \mathsf{pt} \} \right] . 
\end{equation}
where, as we recall, $K^p \subset U(V)(\A_f^p) $ is the stabilizer of $\widehat L {}^p = L \otimes \widehat\Z{}^p$, and hence is identified with $K'^p \subset U(V_T)(\A_f^p)$. 
Without loss of generality, we may normalize \eqref{isomsEqn} so that ${\widehat L}{}^p$ is identified with
\[ g_0 \cdot \Hom(L_0, L) \otimes_{\Z} \widehat\Z{}^p, \]
 and so the quantity in \eqref{numNogoEqn} then becomes
\begin{equation} \label{numNoGoEqn2}
 \# \left[ U(V_T)(\Q) \ \Big\backslash \coprod_{g \in U(V_T)(\A_f^p) / K'^p}  \ 
	\left\{ \mathbf x \in (V_T)^2 \ | \ (\mathbf x, \mathbf x) = T \ \text{and} \ x \in g \cdot \widehat {L'}{}^p \right\}  \right]
\end{equation}
Since $T$ is non-degenerate, the stabilizer of any $\mathbf x$ appearing in \eqref{numNoGoEqn2} is trivial. Thus
\[ \eqref{numNoGoEqn2} \ = \ \sum_j \ \sum_{ \substack{ \mathbf x \in (V_T)^2 \\ (\mathbf x, \mathbf x) = T \\ \text{mod} \ \Gamma'_j }} \  \varphi'^p_{L}(  h_j^{-1} \mathbf x); \]
we note that this quantity is independent of $g_0$, $L_0$, the choice of representative $L$ in its genus $[L]$, and the choices of isomorphisms in \eqref{isomsEqn}.

On the other hand, we observe that
\[  U(V_0)(\Q) \big\backslash U(V_0)(\A_f^p) \big/ K_0^p \  \simeq  \ k^{\times, 1} \big\backslash ( \A_{k,f}^{p})^1  \big/ \widehat{o_k}^{p, 1},  \]
and the latter double quotient is easily seen to be isomorphic to $Cl(k)^2$, where $Cl(k)$ is the class group. Since the 2-torsion in $Cl(k)$ has order $2^{o(\Delta) - 1}$, the order of $Cl(k)^2$ is $h(k) / 2^{o(\Delta) - 1}$, and so (taking automorphisms into account)
\[ \# \left[ U(V_0)(\Q) \backslash U(V_0)(\A_f^p) / K^p \right] \  =  \  \# \left[ k^{\times, 1} \backslash  \A_{k,f}^{p,1}  / \widehat{o_k}^{p, 1} \right] \ = \ \frac{h(k)}{|o_k^{\times}|  \ 2^{o(\Delta) -1 } }. \]
%
Combining these calculations yields the proposition. 

\end{proof}
\end{theorem}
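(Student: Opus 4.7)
The plan is to combine the $p$-adic uniformization of Proposition~\ref{padicUnifCycleProp} with the local intersection formula of Corollary~\ref{mainLocGeomCor}, and then carefully unpack the resulting double-coset expression. Since $\Diff(T)=\{p\}$ and $\mathrm{Zed}(T)$ is supported in the fibre at $p$ by \refLemma{ZSuppLemma}, the arithmetic degree is a sum of contributions from geometric points in characteristic $p$, and the Serre intersection length at each such point is exactly the intersection number of two local Kudla-Rapoport divisors on $\D$. Because $\Diff(T)=\{p\}$, the invariants of $V_T$ and of $V=L\otimes\Q$ agree at every finite place away from $p$, so the hermitian space $V'$ appearing in \refProp{padicUnifCycleProp} is canonically identified with $V_T$ — this identification is the linchpin that lets us turn the formal expression on the right-hand side of the uniformization into a count inside $V_T^2$.

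Next I would apply \refProp{padicUnifCycleProp} to write
\[
\widehat{\Zed}(T)^{[L_0],[L]}\times_{o_{k,p}}\Spf W \ \simeq \ \left[U(V_T)(\Q)\times U(V_0)(\Q)\Big\backslash \coprod_{g,g_0}\coprod_{\mathbf{x}\in\Omega(T,g,g_0)} Z(\mathbf{x}_p)\right].
\]
For each pair $\mathbf{x}=[x_1,x_2]$ appearing in $\Omega(T,g,g_0)$, the local intersection $\la Z(x_{1,p}),Z(x_{2,p})\ra$ depends only on the hermitian matrix $(\mathbf{x},\mathbf{x})=T$ by \refCor{mainLocGeomCor}, and equals $\mu_p(T)$. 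This constant factors out of the entire sum, leaving a purely combinatorial count of points in the quotient stack, weighted by $\log p^2$ (the factor of 2 coming from the inert place $p$ in the Euler characteristic $\log N(\mathfrak{p})$).

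The remaining task is to evaluate this count. Since $T$ is positive definite and non-degenerate, the stabilizer of any $\mathbf{x}\in V_T^2$ with $(\mathbf{x},\mathbf{x})=T$ under $U(V_T)(\Q)$ is trivial, so the $U(V_T)(\Q)$-quotient decouples from the $U(V_0)(\Q)$-quotient. The $U(V_T)$-side, after normalizing the isomorphism $\Hom(L_0,L)\otimes\widehat\Z{}^p\simeq\widehat{L}{}^p$ so that the integrality condition in $\Omega(T,g,g_0)$ becomes simply that $\mathbf{x}\in g\cdot\widehat{L}{}^p$, collapses to the lattice sum
\[
\sum_j\sum_{\substack{\mathbf{x}\in V_T^2 \\ (\mathbf{x},\mathbf{x})=T \\ \bmod\,\Gamma'_j}} \varphi'^p_L(h_j^{-1}\mathbf{x}),
\]
while the $U(V_0)$-side contributes the stack cardinality of $[U(V_0)(\Q)\backslash U(V_0)(\A_f^p)/K_0^p]$. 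Using the identification $U(V_0)\simeq k^{\times,1}$ and the norm exact sequence, this double quotient is $k^{\times,1}\backslash\A_{k,f}^{p,1}/\widehat{o_k}{}^{p,1}\simeq Cl(k)^2$; since $|Cl(k)^2|=h(k)/2^{o(\Delta)-1}$ by genus theory and one must divide by $|o_k^\times|$ to account for automorphisms, the class number factor in the theorem emerges.

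The main obstacle I anticipate is the bookkeeping in the middle paragraph: one must verify that after the uniformization, the integrality condition defining $\Omega(T,g,g_0)$ is genuinely independent of the auxiliary choices ($g_0$, the representative lattice in $[L]$, and the isomorphisms in \eqref{isomsEqn}), and in particular that it translates cleanly into membership in a single ad\`elic $K'^p$-orbit so that the sum over $g$ really does become the prescribed sum over double coset representatives $h_j$ with stabilizers $\Gamma'_j$. Once this ad\`elic tidying is done, together with the identification $V'=V_T$ at $p$, the statement follows by assembling the three factors — $\mu_p(T)$, the lattice point count, and the class number contribution — and remembering the $\log p^2$.
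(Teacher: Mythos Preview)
Your proposal is correct and follows essentially the same approach as the paper's own proof: apply the $p$-adic uniformization of \refProp{padicUnifCycleProp} with the identification $V'=V_T$, factor out the constant local intersection number $\mu_p(T)$ via \refCor{mainLocGeomCor}, and then split the resulting stack cardinality into the $U(V_T)$-side lattice count and the $U(V_0)$-side class-number factor $h(k)/(|o_k^\times|\,2^{o(\Delta)-1})$. The bookkeeping issue you flag (independence of $g_0$ and the ad\`elic normalization) is exactly the point the paper pauses to address as well.
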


\subsection{Non-degenerate Fourier coefficients of Siegel Eisenstein series}  \label{EisSect}
{\ \\ }

In this section, we recall some general definitions and formulas for the Eisenstein series attached to unitary groups that are of interest for our main theorem.  Let $G = U(n,n)$ be the quasi-split unitary group, and $P$ be the standard Siegel parabolic; we view both  as algebraic groups defined over $\Q$. 

Fix a multiplicative character $\eta \colon \A_{k}^{\times} \to \C^{\times}$ whose restriction to $\A_{ \Q}^{\times}$ corresponds to the quadratic character $\chi_k$ attached to the field extension $k / \Q$. We also let $\psi \colon \A = \A_{\Q} \to \C^{\times}$ denote the standard additive character that is trivial on $ \widehat \Z $ and $\Q$. 

Central to our investigation is a family of {degenerate principal series representations}, parametrized by a variable $s \in \C$, and defined via smooth normalized induction:
\[ I(s, \eta) \ := \   \mathrm{Ind}_{P( \A)}^{G(\A)}  \ (\eta \circ \det) |\det|_{\A}^{s} \]
A \emph{holomorphic section} is a family of vectors $\Phi(s, \cdot) \in I(s, \eta)$ parametrized by $s \in \C$ such that for each $g \in G(\A)$, the assignment
\[ s \ \mapsto \Phi(s, g) \] 
 is holomorphic as a function of $s$; for such a  section $\Phi$, we  form the \emph{Eisenstein series}
\begin{equation}
 E(g, s, \Phi) :=  \sum_{ \gamma \in P(\Q) \backslash G(\Q) } \ \Phi(s, \gamma g),
\end{equation}
at least for $Re(s)$ sufficiently large.

The aim of this section is to collect information about the Fourier coefficient $E_T(g, s, \Phi)$ for a non-degenerate matrix $T \in \mathrm{Herm}_{n}(o_k)$. When $Re(s)$ is sufficiently large and $\Phi= \otimes \Phi_v$ is factorizable, there is a product expansion
\begin{equation} \label{ECoeffProdEqn} E_T(g, s, \Phi) \ = \ \prod_{v \leq \infty} \ W_{T, v}(g_v, s, \Phi_v) 
\end{equation}
in terms of the \emph{local Whittaker functions}
\[ W_{T, v}(g_v, s, \Phi_v) \ := \ \int_{\mathrm{Herm}_n(k_v)} \ \Phi_v \left( (\begin{smallmatrix} & 1_n \\ -1_n & \end{smallmatrix}) \cdot b \cdot g_v, \ s  \right) \ \psi_v( - tr( T b)) \ \mathrm d  b,\]
where $\mathrm db$ is the additive Haar measure on $\Herm_n(k_v)$ normalized to be self-dual with respect to the pairing $(n_1, n_2) \mapsto \psi_v(tr(n_1 n_2))$.
Each Whittaker function is entire in $s$. Furthermore, for any sufficiently large finite set $\Sigma$ of places, there is an $L$-function $L^{\Sigma}(s)$ such that the expression
\begin{equation} \label{ECoeffContEqn}
 \mathbf E_T(g, s, \Phi) \ = \ L^{\Sigma}(s)^{-1} \cdot \prod_{ v \in \Sigma} \ W_{T, v}(g_v, s, \Phi_v) 
\end{equation}
furnishes a meromorphic continuation of \eqref{ECoeffProdEqn} to all $s \in \C$ that is holomorphic for $Re(s) \geq 0$, cf.\ \cite[\S8]{KRunnglob} or \cite{Tan}. 

Note that any standard section $\Phi$ is determined by its value $\Phi(0, \cdot) \in I(0, \eta)$, and so we would like to understand this latter space as a representation of $G(\A)$. We begin by describing the local case: let $V_v$ be an $n$-dimensional hermitian space over the local field $k_v$, for some  place $v$, and let $\mathcal S(V_v^n)$ denote the space of Schwarz functions on $V_v^n$. We obtain a map
\[ R_v \colon \mathcal S(V_v^n) \ \to \ I_v(0, \eta_v) = \mathrm{Ind}_{P(k_v)}^{G(k_v)} \left( \eta_v ( \det) \right) \ \qquad  \text{ where  }  \ R_v(\varphi_v) (  g_v)  \ =  \  \left(\omega_{v}( g_v) \varphi_v\right)(0) \] 
and $\omega_v$ is the local Weil representation. If we let $R_v(V_v)$ denote the image of this map, we obtain a decomposition
\[ I_v(0, \eta_v) \ = \ \bigoplus_{V_v} R_v( V_v) \]
into irreducible components, cf.\ \cite{KudlaSweet}; here the sum is over isomorphism classes of hermitian space $V_v$ of dimension $n$, and so there are either 1, 2, or $n+1$ summands, corresponding to the cases $v$ split, $v$ non-split, and $v = \infty$ respectively. Given a Schwarz function $\varphi_v \in \mathcal S(V_v^n)$, we let $\Phi_{\varphi_v}(s)$ denote the unique standard section of $I_v(s, \eta_v)$ such that $\Phi_{\varphi_v}(0, \cdot) = R_v(\varphi_v)$; the section $\Phi_{\varphi_v}$ is called the \emph{Siegel-Weil standard section} attached to $\varphi_v$.  

\begin{proposition}[{\cite[Proposition 10.1]{KRunnglob}}] \label{WhittakerProp}
Suppose $v$ is a finite prime, $\varphi_v$ is the characteristic function of an $o_{k,v}$ hermitian lattice $L_v$ of rank $n$ and $\Phi_v(s)$ is the associated Siegel-Weil standard section. 
Then for $r \in \Z_{\geq 0}$,
\[ W_{T,v}(e, r, \Phi_v) \ = \ \gamma_v(V_v)^n \ | N(\det \, S )|_v^{n/2} \ |\Delta|_v^{e}  \ \alpha_v(S_r, T) \]
where 
\begin{compactenum} [(i)]
\item $S$ is any matrix representing the hermitian form on $L_v$;
\item $\alpha_v(S_r,T)$ is the representation density as in \eqref{repDenDefEqn};
\item $\Delta$ is the discriminant of $k$ and $e := \frac14 n(3n+4r-1)$;
\item and $\gamma_v(V_v)$ is an eighth root of unity depending only on $V_v = L_v \otimes_{\Z} \Q$, see \cite[Equation 10.3]{KRunnglob}. 
\end{compactenum}
In particular,
\[ W'_{T,v}(e, 0, \Phi_v) \ =  \   \gamma_v(V_v)^n \ | N(\det \, S)|_v^{n/2} \ |\Delta|^e_v\  \alpha_v'(S,T) \cdot \log v .\]
\qed
\end{proposition}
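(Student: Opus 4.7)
The plan is to reduce the computation to the case $r = 0$ via a lifting of the standard section, then to unfold the Whittaker integral using the explicit action of the Weil representation, and finally to identify the resulting integral as the representation density $\alpha_v(S_r, T)$.

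First I would establish the key reduction: for an integer $r \geq 0$, the Siegel-Weil standard section $\Phi_{\varphi_v}(s, \cdot)$ evaluated at $s = r$ coincides with the Siegel-Weil section at $s = 0$ attached to the Schwarz function $\tilde\varphi_v \in \mathcal{S}\bigl((V_v \oplus \langle 1 \rangle^r)^n \bigr)$ given by $\tilde\varphi_v = \varphi_v \otimes \mathbf{1}_{o_{k,v}^r}^{\otimes n}$. Concretely, the enlarged hermitian space $V_v^{(r)} := V_v \oplus \langle 1 \rangle^r$ has Gram matrix $S_r$ and dimension $n + r$, and its Weil representation produces a section shifted by the corresponding Iwasawa factor $|\det m|_v^{r}$ (tracked from the action on the extra unit summands). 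Matching on the Iwasawa decomposition $G = PK$ reduces the proposition to the case $r = 0$ with $V_v$ replaced by $V_v^{(r)}$ and $S$ replaced by $S_r$; the Weil constant $\gamma_v(V_v^{(r)})$ differs from $\gamma_v(V_v)$ by a factor contributed by the unit summands (absorbed into the stated power $\gamma_v(V_v)^n$), and the extra contribution $|\Delta|_v^{e}$ arises from comparing the self-dual Haar measures on $V_v^n$ and $V_v^{(r),\,n}$.

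Next, in the case $r = 0$, I would expand the Whittaker integral using $\omega_v(n(b))\varphi_v(x) = \psi_v(\mathrm{tr}\, b(x,x))\, \varphi_v(x)$ and $\omega_v(w)\varphi_v(x) = \gamma_v(V_v)^n\, \hat\varphi_v(x)$. Evaluating at $x = 0$ gives
\[ \Phi_{\varphi_v}(0,\, w\, n(b)) \ = \ \gamma_v(V_v)^n \int_{V_v^n} \psi_v\bigl(\mathrm{tr}\, b(x,x)\bigr)\, \varphi_v(x)\, dx, \]
and substituting into the definition of the Whittaker integral, with the order of integration exchanged (legitimate because $\varphi_v$ has compact support), produces
\[ W_{T,v}(e, 0, \Phi_{\varphi_v}) \ = \ \gamma_v(V_v)^n \int_{V_v^n} \varphi_v(x) \left( \int_{\Herm_n(k_v)} \psi_v\bigl(\mathrm{tr}\, b\,((x,x) - T)\bigr)\, db \right) dx. \]

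Finally, I would recognize the inner integral as the Fourier-theoretic delta distribution on $\Herm_n(k_v)$ concentrated at the locus $(x,x) = T$. Discretizing the delta at congruence level $p^k$ converts the expression into the normalized count $p^{-kn^2} \cdot \#\{ x \in L_v^n /p^k L_v^n : (x,x) \equiv T \pmod{p^k}\}$, which in the limit $k \to \infty$ is exactly $\alpha_v(S, T)$ after changing variables via a basis of $L_v$ with Gram matrix $S$. The change of variables produces the factor $|N(\det S)|_v^{n/2}$, while the self-dual normalization of $db$ on $\Herm_n(k_v)$ with respect to $\psi_v \circ \mathrm{tr}$ accounts for the factor $|\Delta|_v^e$. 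Combining with Step 1 yields the formula for general $r$, and the derivative statement follows by differentiating the resulting polynomial identity $W_{T,v}(e, r, \Phi_v) = (\text{constants}) \cdot F(S, T; (-p)^{-r})$ in $r$, with the factor $\log v$ emerging from $\frac{d}{ds}\, p^{-s(\cdots)}|_{s=0}$.

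The main obstacle will be the bookkeeping of Haar measure normalizations and Weil constants, especially the precise form of the exponent $e = \tfrac{1}{4}n(3n + 4r - 1)$: this requires tracking the self-duality convention for $db$ against $\psi_v \circ \mathrm{tr}$ through both the lifting step (where dimension changes from $n$ to $n+r$) and the Fourier inversion, and matching it against the standard normalization used in the definition of $\alpha_v(S_r, T)$.
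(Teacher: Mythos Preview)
The paper does not actually prove this proposition: it is quoted verbatim from \cite[Proposition~10.1]{KRunnglob} and marked with a \qed, so there is nothing to compare against on the paper's side beyond the citation.

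Your sketch is the standard argument underlying such identities (going back to Kudla and Rallis), and it is correct in outline: lift $\Phi_v(r,\cdot)$ to the $s=0$ section for the enlarged space $V_v^{(r)} = V_v \oplus \langle 1\rangle^r$, unfold the Whittaker integral via the explicit Weil-representation formulas for $n(b)$ and $w$, swap the order of integration, and recognize the inner $b$-integral as the distributional delta on $\Herm_n(k_v)$ that cuts out $(x,x)=T$, yielding the density $\alpha_v(S_r,T)$ after a basis change. One point to tighten: in your reduction step the roles of the two ``$n$''s need to be kept separate. The exponent $n$ in $\gamma_v(V_v)^n$ is the rank of $T$ (the number of copies of $V_v$), not the dimension of $V_v$; after lifting you are computing a Whittaker integral for $T\in\Herm_n$ against a section built from a space of dimension $n+r$, so the ``$r=0$ formula'' you invoke is really the general $m\geq n$ formula with $m=n+r$. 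The fact that $\gamma_v(V_v^{(r)})=\gamma_v(V_v)$ then needs the (true, but worth stating) observation that the Weil index of the self-dual unit summand $\langle 1\rangle$ is $1$, and similarly $\det S_r=\det S$. With that straightened out, the exponent $e=\tfrac14 n(3n+4r-1)$ falls out of the self-dual measure normalization on $\Herm_n(k_v)$ combined with the measure on $(V_v^{(r)})^n$; as you note, this is pure bookkeeping, and is exactly the content of the computation carried out in \cite[\S10]{KRunnglob}.
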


Passing to the global picture, we observe that $ I(0, \eta)  \ = \ \otimes' \ I_v(0, \eta_v) $ can be decomposed as a restricted tensor product; a  pure tensor $\Phi = \otimes \Phi_v$ lies in $I(0, \eta)$ if and only if for almost all $v$, the local component $\Phi_v$ is the indicator function of $G(\Z_v)$. As a consequence, we may write
\[ I(0, \eta) \ = \  \bigoplus_{\mathcal C} R(\mathcal C) \]
where the sum on $\mathcal C$ is over isomorphism classes of Hermitian spaces over $\A_k$ of rank $n$, such that $\inv_{v} ( \mathcal C \otimes_{\A_k} k_{v} ) = 1 $ for almost all $v$. If  $\prod_v \inv_v( \mathcal C_v) = 1$, then there exists a hermitian space $\mathcal V$ over $k$ such that $\mathcal C  = \mathcal V \otimes_k \A_k$, and in this case we say that $\mathcal C$ is \emph{coherent}. 
%

\begin{theorem}[Extended Siegel-Weil formula] \label{SWThm}
Suppose that $\mathcal V$ is an $n$-dimensional positive definite hermitian space  over $k$, and let $H = U(\mathcal V)$. If $\varphi \in \mathscr S( \mathcal V(\A)^n)$ is an adelic Schwarz function, we define
\begin{equation} \label{thetaDefEqn}
\mathbf I( g, \varphi) \ : = \ \int\limits_{H(\Q) \backslash H(\A)}  \ \left( \sum_{x \in \mathcal V(\Q)^n} \omega(g)   \ \varphi( h^{-1} x) \right) \ \mathrm dh, \qquad g \in G(\A). 
\end{equation}
Here the measure $\mathrm dh$ is the Haar measure normalized so that $\mathrm{vol}( H(\Q) \backslash H(\A) , \mathrm dh) = 1$. Then
\[ \mathbf E(g, 0 , \Phi) \ = \ 2 \ \mathbf I ( g , \varphi). \] 
 \qed
\end{theorem}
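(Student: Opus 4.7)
The plan is to invoke the extended Siegel--Weil formula for the unitary reductive dual pair $(G,H) = (U(n,n), U(\mathcal V))$, a result established in this generality by Ichino (following fundamental work of Kudla--Rallis in the orthogonal setting). The positive-definiteness hypothesis on $\mathcal V$ is critical: it forces $H_\infty$ to be compact, so that $H(\Q)\backslash H(\A)$ has finite volume under any choice of Haar measure (indeed is compact once the centre is properly quotiented), and consequently the theta integral $\mathbf I(g,\varphi)$ converges absolutely for every $\varphi \in \mathscr S(\mathcal V(\A)^n)$ with no regularization required.

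The condition $\dim \mathcal V = n = \mathrm{rank}(G)$ puts us at the ``balanced'' value $s_0 = (m-n)/2 = 0$ of the degenerate principal series $I(s, \eta)$, which lies well below the range $\mathrm{Re}(s) > n/2$ of absolute convergence of the Eisenstein series. Nevertheless, the coefficient-by-coefficient continuation $\mathbf E_T(g,s,\Phi)$ in \eqref{ECoeffContEqn} is visibly holomorphic at $s=0$: by \refProp{WhittakerProp} each local Whittaker factor is proportional to a representation density $\alpha_v(S_r, T)$, and these are polynomials in the parameter $(-p)^{-r}$ with no poles. In this setting the extended Siegel--Weil formula gives precisely
\[ \mathbf E(g, 0, \Phi_\varphi) \ = \ 2\, \mathbf I(g, \varphi), \]
for a coherent $\mathcal V$ defined over $k$; the factor of $2$ is the hallmark of the boundary (``second-term'') range, reflecting that $I(0, \eta)$ is reducible and that $R(\mathcal V)$ together with its nearby companion accounts for the Siegel--Weil summand with multiplicity two.

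The one piece of bookkeeping to handle is the reconciliation of the coefficient-by-coefficient continuation in \eqref{ECoeffContEqn} with the standard Langlands meromorphic continuation of $E(g,s,\Phi)$ used in the Siegel--Weil literature. Since both continuations agree wherever $E(g,s,\Phi)$ converges (namely $\mathrm{Re}(s) > n/2$) and both are holomorphic at $s=0$, they must agree there by uniqueness of meromorphic continuation. The main obstacle is therefore not conceptual but rather the careful matching of normalizations of measures, characters $\eta$ and $\psi$, and sections $\Phi_\varphi$ between the references --- this is essential because the correctness of the constant $2$ (as opposed to, say, $1$ or $h(k)^{-1}$) is what drives the numerical match with the geometric side in \refThm{MainThmGeom}.
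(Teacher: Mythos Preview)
The paper does not prove this theorem: it is stated with a bare \qed\ and treated as a black box imported from the literature (in the same spirit as in \cite{KRunnglob}, on which this section is modelled). Your proposal is therefore not so much a competing proof as an expanded justification of why the quoted result applies, and in that capacity it is essentially correct: the relevant input is indeed Ichino's extension of the Siegel--Weil formula to the unitary dual pair $(U(n,n), U(\mathcal V))$ with $\dim \mathcal V = n$, where the anisotropy of $\mathcal V$ guarantees absolute convergence of the theta integral and the factor $2$ arises because we are at the boundary of the first-term range. Your remark about reconciling the coefficient-wise continuation \eqref{ECoeffContEqn} with the Langlands continuation is a fair point of bookkeeping, though in practice the paper only ever uses the Siegel--Weil identity coefficient by coefficient (see the proof of \refThm{MainThmEis}), so the global agreement of continuations is not strictly needed.
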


%

%

Specializing to the case $n=2$, we now describe the Eisenstein series that figures in our main theorem. Let $L \in \mathcal R_d$ be a lattice. We define a (non-standard) section $\Phi^*_L = \otimes \Phi^*_{L, \ell}$ as follows:
\begin{itemize}
	\item If $\ell \nmid d$ is a finite prime,  set $\Phi^*_{L, \ell}(s)$ to be the standard Siegel-Weil section attached to the characteristic function $\varphi_{L,\ell}$ of $(L_{\ell})^2$.
	\item If $\ell = \infty$,  take $\Phi^*_{L, \infty}(s)$ to be the Siegel-Weil section attached to the standard Gaussian on the positive definite Hermitian space $(\C^n)^2$.
	\item Suppose $\ell | d$. Let $\Phi_{L, \ell}$ denote the Siegel-Weil section attached to the characteristic function of $L_{\ell}$, and recall that $\inv_{\ell} V_{\ell} = -1$. Let $V^{+}_{\ell}$ be the Hermitian space with $\inv_{\ell} V^{+}_{\ell} = 1$, and fix a self-dual lattice $L_{\ell}^+$ inside it. Denote its characteristic function by $\varphi_{L^+_{\ell}}$ and let $\Phi_{L^+_{\ell}}(s)$ be the corresponding Siegel-Weil section. We then define
	\[ \Phi^*_{\ell}(s)  \ := \  \Phi_{L_{\ell}}(s) \ + \  A_{\ell}(s)  \ \Phi_{L^+_{\ell}}(s) \]
	where\footnote{
	It will turn out that for our purposes, only the values $A_{\ell}(0) = 0$ and $A'_{\ell}(0) = \frac{\log \ell}{1- \ell^2 }$ will be relevant, and so one may instead take any other function that yields the same values when it and its derivative are evaluated at $s=0$; we have chosen this particular function only for concreteness' sake. 
} 
	\[ A_{\ell}(s) \ := \ \frac{1}{2(1-\ell^2  ) } \ \left( \ell^s - \ell^{-s } \right). \]
\end{itemize}
Next, let
\[\mathbf E(g, s, [L]) \ := \ \mathbf E(g, s, \Phi^*_L)\]
 be the corresponding Eisenstein series, which only depends on the genus $[L]$. We also consider the `classicalized' Eisenstein series, as follows. Let
 \[ \lie{H}_2 \ := \ \left\{ z \in M_2(\C) \ | \ v(z) := \frac{1}{2i}\left( z - {}^t\bar z \right) > 0 \right\} \]
 denote the Hermitian upper half-space; for $z \in \lie{H}_2$, we may write $v(z) = a \cdot {}^t\bar a$ for some $a \in GL_2(\C)$, and we set $u(z) := \frac{1}{2} (\tau + {}^t\bar \tau)$. Define elements 
 \[ g_{z, \infty} \ = \ \begin{pmatrix} \Id_2 & u(z) \\ & \Id_2 \end{pmatrix} \begin{pmatrix} a & \\ & {}^t \bar{a}^{-1} \end{pmatrix} \ \in \ G(\R), \qquad \text{and} \qquad  g_z = (g_{z, \infty}, 1, 1, \dots) \in G(\A) , \]
 and set
  \[ \mathcal E(z, s, [L]) \ :=  \ \eta_{\infty}(\det \, a)^{-1}  \det(v)^{-1} \ \mathbf E(g_z, s, [L]). \]
This normalization ensures that $\mathcal E(z, s, [L])$ transforms as a Hermitian modular form of weight $2$ on $\lie{H}_2$. 
%

Let $T \in \mathrm{Herm}_2(o_k)$ be a positive definite matrix. As was the case in the previous section, it turns out that we are most interested in genera $[L] \in \mathcal R_d$ such that $L \otimes_{\Z} \A_f^p \simeq V_T \otimes_{\Q} \A_f^p$; we fix such a lattice $L$ and an isomorphism in the following discussion. 


Let $V := L \otimes \Q$; i.e.\ $V$ is, up to isomorphism, the unique hermitian space of signature (1,1) whose invariants differ from those of $V_T$ at exactly $p$ and $\infty$. Fix a non-zero element $\mathtt b \in \bigwedge^{4}_{\Q} \mathrm{Herm}_2(k)^*$, and non-zero elements 
\[  \mathtt a_V \in \bigwedge\nolimits^{8}_{\Q} (V^2)^* \qquad \text{and} \qquad   \mathtt a_{V_T} \in \bigwedge\nolimits^{8}_{\Q} (V_T^2)^*. \] 
 As explained in e.g.\  \cite[\S 10]{KRunnglob}, these elements determine  gauge forms $\omega_V = \omega( \mathtt a_V, \mathtt b)$ on $V$ and $\omega_{V_T}  = \omega(\mathtt a_{V_T}, \mathtt b)$ on $V_T$ , which in turn  induce factorizations 
\begin{equation} \label{matchTamEqn}
 \mathrm d h \ = \ \frac12  L(1, \chi_k)^{-1} \ \prod_{v} \ \mathrm d_v h \qquad \text{and} \qquad \mathrm d h_T \ = \ \frac12  L(1, \chi_k)^{-1} \ \prod_{v} \ \mathrm d_v h_T 
\end{equation}
of the Haar measures $\mathrm dh $ and $\mathrm d h_T$ on $U(V)(\A)$ and $U(V_T)(\A)$ respectively in terms of Tamagawa measures; for example, each term $\mathrm d_v h$ is a measure on $U(V)(\Q_v)$ determined by $\omega_V$. We say that the gauge forms (or the respective decompositions of Haar measures) are \emph{matched} if $\omega_{V_T}  = \gamma^* \omega_V$ for some isomorphism $\gamma \colon V_T \otimes_{\Q} \overline \Q \isomto V \otimes_{\Q} \overline \Q$.

We briefly recall the notation that we had set up in the previous section: we view $\widehat L^p$ as an adelic lattice in $V_T \otimes \A_f^p$, and define a Schwarz function
\begin{equation}
	\varphi'^p_L \ \in \ \mathscr S\left( (V_T \otimes \A^p_f)^2 \right)  
\end{equation}
as the characteristic function of $( \widehat L^p)^2$. Put
\[ K'^p  := \  \mathrm{Stab}_{U(V_T)(\A^p_f)}(  \widehat L^p) \ \subset \ U(V_T)(\A^p_f), \]
and writing
\begin{equation} \label{HTCosetsEqn}
 U(V_T)(\A_f^p) \ = \ \coprod_j \ U(V_T)(\Q) \ h_j \ K'^p, 
\end{equation}
set  $\Gamma'_j \ = \ h_j K'^p h_j^{-1} \cap U(V_T)(\Q)$. 

\begin{theorem}  \label{MainThmEis}
Let $[L] \in \mathcal R_d$, and suppose $T \in \mathrm{Herm}_2(o_k)$ is positive definite. Set
\[  \Diff(T) \ := \ \{ \ell \nmid d \text{ inert}, \ \ord_{\ell} \det T \text{ odd } \} \ \bigcup \  \{ \ell \mid d, \ \ord_{\ell} \det T \text{ even } \}.
 \] 
 \begin{enumerate}[(i)]
\item Suppose $p \in \Diff(T)$, but  $L \otimes_{\Z} \A_f^p \not\simeq V_T \otimes_{\Q} \A_f^p$. Then $\mathcal E_T'(z, 0, [L]) = 0$. Moreover, this is the case whenever $\# \Diff(T) \geq 2$. 
\item If  $\Diff(T) = \left\{ p \right\}$ is an inert prime with $p|d$, and $ L \otimes_{\Z} \A_f^p \simeq V_T \otimes_{\Q} \A_f^p$, then
\begin{equation} \label{E'ThmEqn}
 \mathcal E_T'(z, 0, [L]) \  = \ C_{[L]}  \ \sum_{j} \ 
  \sum_{ \substack{\mathbf x \in V_T^2 \\ (\mathbf x, \mathbf x) = T \\ \text{mod } \Gamma'_j }} \varphi'^p_{L}(h_j^{-1} \mathbf x)  \ \cdot \ \mu_p(T) \  \log(p) \ q^T 
 \end{equation}
  where $q^T := e^{ 2 \pi i \, \mathrm{Tr}( T z) } $ and
  \[ C_{[L]}  = \ L(1, \chi_k)^{-1} \ \mathrm{vol} \left( U(V_T)(\R), \, \mathrm{d}_{\infty} h_T\right) \ \mathrm{vol}\left( K_L, \, \mathrm d^{\infty} h \right).  \]
  Here $K_L \subset U(V)(\A_f)$ is the stabilizer of $ \, \widehat L = L \otimes \widehat \Z$, and the measures  $\mathrm{d}_{\infty} h_T$ and $ \mathrm{d}^{\infty} h \ = \ \prod_{v \neq \infty} \mathrm d_v h$ are components of a matched decomposition as in \eqref{matchTamEqn}. 
Moreover, the constant $C_{[L]}$ is independent of $T$ and all choices appearing in the decompositions of Haar measures.

\end{enumerate}
\begin{proof}
Taking $g = g_z \in G(\A)$ as above, let $\Sigma$ be a sufficiently large finite set of primes containing all the primes in $\Diff(T)$ so that, upon taking the derivative in \eqref{ECoeffContEqn}, we obtain 
{ \small
\begin{equation}\label{derivEEqn}
 \mathbf E_T'(g, 0, [L]) \ = \ -\frac{L^{\Sigma,'}(0)}{L^{\Sigma}(0)^2} \prod_{v \in \Sigma} W_{T, v}(g_{v}, 0, \Phi^*_v) \ + \ L^{\Sigma}(0)^{-1} \cdot \left( \sum_{v \in \Sigma} W'_{T, v}(g_v, 0, \Phi^*_v) \prod_{\substack{v' \in \Sigma \\ v' \neq v}} W_{T, v'}(g_{v'}, 0, \Phi^*_{v'}) \right) 
\end{equation}
}
for $[L] \in \mathcal R_d$ and $\Phi^* = \Phi^*_L$. If $v$ is a finite prime such that $L \otimes_{\Z} \Q_{v} \not\simeq V_T \otimes_{\Q} \Q_{v}$, then  $L_v$ does not represent $T$, and so the representation density $\alpha_v(S, T)$ vanishes, where $S$ is any matrix representing the hermitian form on $L$. By  \refProp{WhittakerProp},
\[W_{T,v}(g_v, 0, \Phi^*_v) \ =\   W_{T, v}(e, 0, \Phi^*_v) \ = \ 0 \] 
as well. Thus if there are at least two such primes, then $\mathbf E_T'(g, 0, [L]) = 0$ as each term in \eqref{derivEEqn} vanishes. 

By the definition of $\mathcal R_d$, we have that $L\otimes_{\Z} \Q_{\ell} \not\simeq V_T \otimes_{\Q} \Q_{\ell}$ for any prime $\ell \in \Diff(T)$, since $\inv_{\ell} (L\otimes \Q )= 1$ when $\ell \nmid d$ and $\inv_{\ell}( L \otimes \Q) = -1$ when $\ell \mid d$. This proves the first statement. 
%
%

Thus from this point on, we suppose $\Diff(T) = \{ p\}$ for some $p|d$,  and we fix a lattice $[L] \in \mathcal R_d$  with an isomorphism $L \otimes_{\Z} \A_f^p \simeq V_T \otimes_{\Q} \A_f^p$. 
Then \eqref{derivEEqn} gives
\begin{equation}   \label{prodDecompEqn}
 \mathbf E_T'(g, 0, [L]) \ = \ W'_{T,p}(e, 0, \Phi^*_p) \ \cdot\  L^{\Sigma}(0)^{-1} \ \prod_{\substack{v \in \Sigma \\ v \neq p}} W_{T, v}(g_{v}, 0, \Phi^*_{v})  
\end{equation}
By using  \refProp{WhittakerProp} and the definition of $\Phi^*_p$, 
\begin{align*} 
	 W'_{T, p}(e,0, \Phi^*_p) \ =&   \ \gamma_p(V_p^-)^2 \   |p^2|_p \ \alpha'_p \left( (\begin{smallmatrix} 1 & \\ & p \end{smallmatrix} ), T \right)  \log p   \
	 + \   \gamma_p(V^+_p)^2 \ A'_p(0) \ \alpha_p \left( (\begin{smallmatrix} 1 & \\ & 1 \end{smallmatrix} ), T \right)   \\
	 =& \ \log p \cdot \left( \frac{ \gamma_p(V_p^-)^2 }{p^2}  \alpha'_p \left( (\begin{smallmatrix} 1 & \\ & p \end{smallmatrix} ), T \right) \ + \  \frac{\gamma_p(V^+_p)^2 }{1-p^2 } \alpha_p \left( (\begin{smallmatrix} 1 & \\ & 1 \end{smallmatrix} ), T \right)  \right)
\end{align*}
It is easily seen that $\gamma_p(V^+_p) = -  \gamma_p(V^-_p)$, cf.\ \cite[Equation (10.3)]{KRunnglob}. By \refCor{repDenFinalCor}, 
\begin{equation} \label{W'FinalEqn}
	 W'_{T,p}(e, 0, \Phi^*_{L,p}) \ = \  \gamma_p(V^+_p)^2 \cdot \frac{(p+1)^2}{p^3}  \cdot \mu_p(T) \cdot \log p.
\end{equation}
We may compute the remaining terms in \eqref{prodDecompEqn} via the Siegel-Weil formula. 
Let 
\begin{equation} \label{lieLDefEqn}
\lie{L} \ \subset \ V_T
\end{equation}
be a lattice such that $\lie{L} \otimes \widehat\Z{}^p $ is identified with $L \otimes \widehat\Z{}^p$ and $\lie{L}_p$ is self-dual. We then have a Schwarz function
\[ \varphi'   \ = \ \varphi'_{\infty} \otimes \varphi'_{f} \ \in \  \mathscr S(V_T(\A)^2), \]
 where $\varphi'_f $ is the indicator function of $ (\lie{L}\otimes \widehat \Z)^2$ in $(V_T \otimes \A_f)^2$, and $\varphi'_{\infty}$ is the standard Gaussian on the positive definite space $(V_T \otimes \R)^2 \simeq \C^4$. Note that $\varphi'^p_f = \varphi'^p_L$ as before. 
 
 Let $\Phi' = \otimes \Phi'_v$ the corresponding standard Siegel-Weil section. It follows immediately from definitions that
\[ W_{T,v}(g_v, 0, \Phi'_v) \ = \ W_{T,v}(g_v, 0, \Phi^*_v) \qquad \text{for all } v \neq p. \]
Combining this fact with the extended Siegel-Weil formula (\refThm{SWThm}) and the product expansion \eqref{ECoeffContEqn}, we have  for any $g \in G(\A)$, 
\[ 2 \, \mathbf I_T(g, \varphi') \ = \ \mathbf E_T(g, 0 , \Phi') \ = \ L^{\Sigma}(0)^{-1} \ \left( \prod_{\substack{ v \in \Sigma \\ v \neq p}} W_{T,v}(g_v, 0, \Phi^*_v) \right)\ W_{T,p}(g_p, 0, \Phi'_p)  \]
where
\[ \mathbf  I_T(g, \varphi') \ = \ \int\limits_{H_T(\Q) \backslash H_T(\A)} \ \left( \sum_{ \substack{ \mathbf x \in \mathcal (V_T)^2 \\ (\mathbf x, \mathbf x) = T}} \omega(g)   \ \varphi'\left( h_T^{-1} x\right) \right) \ \mathrm dh_T \]
is the $T$'th Fourier coefficient of the theta integral \eqref{thetaDefEqn} and for ease of notation, we have abbreviated $U(V_T) = H_T$. 
Writing
\[ H_T(\A) \ = \ \coprod_j \ H_T(\Q)  \cdot h_j \cdot K'^p \cdot H_T(\R) \cdot H_T(\Q_p) \]
for a collection of elements $h_j \in H_T(\A_f^p)$ appearing in \eqref{HTCosetsEqn}, we  then have
\[ \mathbf  I_T(g, \varphi_T) \ = \  \sum_j \ \sum_{ \substack{ \mathbf x \in \mathcal (V_T)^2 \\ (\mathbf x, \mathbf x) = T \\ \text{mod } \Gamma'_j} } \ \left[ \ \int\limits_{K'^p \, H_T(\R)\, H_T(\Q_p)} \ \omega(g)   \ \varphi' \left( h_T^{-1}  h_j^{-1} \mathbf x \right) \ \mathrm d h_T \right]. \]
We again specialize to the case 
\[ g \ =  \ g_z \ = \ (g_{z, \infty}, 1, 1, \dots) \ \in \ G(\A) .\]
Recall that we had chosen matched factorizations
\[ \mathrm d \, h_T \ = \ \frac12 L(1, \chi_k)^{-1} \ \prod_v \mathrm d_v h_T \qquad \text{and} \qquad   \mathrm d \, h \ = \ \frac12 L(1, \chi_k)^{-1} \ \prod_v  \mathrm d_v h \]
for the Haar measures $\mathrm d h_T$ and $ \mathrm d  h$ on $H_T(\A)$ and $H(\A)$ respectively. 
If we set $\mathrm d^{p, \infty} \, h_T = \prod_{v \neq p, \infty} \mathrm d_v h_T$, then by definition of $K'^p$ and $\varphi'$, 
\[ \int_{K'^p}  \ \varphi'^p_f(h^{-1} h_j^{-1} \mathbf x) \ \mathrm d^{p, \infty}h  \ = \ \varphi'^p_f(h_j^{-1} \mathbf x)  \ \ \mathrm{vol}(K'^p, \, \mathrm d^{p, \infty} h_T) \ = \ \varphi'^p_L(h_j^{-1} \mathbf x)  \ \ \mathrm{vol}(K_L^p, \, \mathrm d^{p, \infty} h),   \]
where we use part \textit{(i)} of \refLemma{volLemma} below in the second equality. 
On the other hand, a straightforward computation using the archimedean Weil representation, cf.\ \cite[Equation (7.4)]{KRunnglob}, yields 
\[ \int_{H_T(\R)} \omega(g_{z, \infty}) \  \varphi'_{\infty}(h_T^{-1} h_j^{-1} \mathbf x)  \ \mathrm d_{\infty} h_T \ = \ \mathrm{vol}(H_T(\R), \mathrm d_{\infty}h_T) \ \eta_{\infty}(\det{a})  \ \det(v) \  q^T. \]
Combining these calculations with part \textit{(ii)} of \refLemma{volLemma} below gives
\begin{equation} \label{ITFinalEqn}
 \mathbf I_T(g_z, \varphi') \ = \ C \ \sum_j    \ \left( \sum_{ \substack{ \mathbf x \in \mathcal (V_T)^2 \\ (\mathbf x, \mathbf x) = T \\ \text{mod } \Gamma'_j}}  \ \varphi_f'^p(\mathbf x)\right) \eta_{\infty}(\det{a})  \ \det(v) \ W_{T,p}(e, 0, \Phi_p') \  q^T 
\end{equation}
where 
\begin{align*}
 C \  =& \ \frac12 \ L(1, \chi_k)^{-1} \ \mathrm{vol}\left(H_T(\R), d_{\infty} h_T \right) \ \mathrm{vol} \left(K_L, d^{\infty} h \right)  .
\end{align*}
The proof of \cite[Lemma 9.5]{KRunnglob} implies that $C$ is independent of $T$ as well as the choices involved in the decompositions of Haar measures; indeed this constant can be written as a ratio of volumes for which the independence is immediately evident.  

Using \eqref{W'FinalEqn} and \eqref{ITFinalEqn}, 
\begin{align*}
\mathbf E'(g_z, 0, [L]) \ =& \ W_{T,p}'(e, 0, \Phi^*_p) \ \frac{2 \, \mathbf I_T (g_z, \varphi')}{W_{T,p}(e, 0, \Phi_p')}  \\
=&    \  2 \  C  \cdot \mu_p(T) \cdot \left( \sum_{ \mathbf x} \varphi'^p_f(\mathbf x) \right) \cdot \eta_{\infty}(\det{a})  \cdot \det(v) \cdot  \log p \cdot q^T. 
\end{align*}
The result now  follows from rewriting the above in terms of the classicalized Eisenstein series $\mathcal E(z, 0, [L])$.
\end{proof}
\end{theorem}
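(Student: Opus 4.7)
The plan is to follow the approach of Kudla--Rapoport in \cite[\S 9--10]{KRunnglob}, with the key new input being the presence of the non-standard local section $\Phi^*_p$ at the primes $p \mid d$, together with the representation density calculations from \refSec{EisSect}. I would proceed in three stages.

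First, I would establish \emph{(i)} purely from \refProp{WhittakerProp}. Differentiating the meromorphic continuation \eqref{ECoeffContEqn} at $s=0$ produces a sum of terms in each of which at most one local Whittaker factor is differentiated. For any finite prime $v$ at which $L \otimes \Q_v \not\simeq V_T \otimes \Q_v$, the lattice $L_v$ cannot represent $T$, so the representation density $\alpha_v(S, T)$ vanishes, and by \refProp{WhittakerProp} so does $W_{T,v}(e, 0, \Phi^*_v)$. Hence whenever there are at least two such primes, every summand in $\mathbf{E}'_T(g, 0, [L])$ contains at least one vanishing factor. The observation that every $\ell \in \Diff(T)$ automatically forces a mismatch (because of the defining local invariant conditions on $\mathcal R_d$) then handles the case $|\Diff(T)| \geq 2$.

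Second, assume $\Diff(T) = \{p\}$ with $p \mid d$ and the matching hypothesis on $L$. Then $L \otimes \Q_v \simeq V_T \otimes \Q_v$ for all $v \neq p$, so in the derivative only the single summand with $W'_{T,p}$ differentiated contributes. Using the definition $\Phi^*_p = \Phi_{L_p} + A_p(s) \Phi_{L_p^+}$, together with $A_p(0) = 0$, $A'_p(0) = \log p / (1-p^2)$, and \refProp{WhittakerProp} applied to the matrices $S = \diag(p,1)$ and $S = \Id_2$, I would express
\[ W'_{T,p}(e, 0, \Phi^*_p) = \log p \cdot \frac{\gamma_p(V_p^+)^2}{p^2}\left[ \alpha'_p(\diag(p,1), T) + \frac{p^2}{1-p^2} \alpha_p(\Id_2, T) \right], \]
using $\gamma_p(V_p^+) = -\gamma_p(V_p^-)$. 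Plugging in \refCor{repDenFinalCor} then collapses the bracket to $\mu_p(T) \cdot (p+1)^2/p$, yielding the clean expression \eqref{W'FinalEqn}.

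Third, to evaluate the remaining product $\prod_{v \in \Sigma, v \neq p} W_{T,v}(g_v, 0, \Phi^*_v)$, I would run the extended Siegel-Weil identity \refThm{SWThm} on the positive-definite hermitian space $V_T$. Pick a lattice $\lie{L} \subset V_T$ as in \eqref{lieLDefEqn} that agrees with $L$ away from $p$ and is self-dual at $p$, and let $\Phi'$ be the Siegel-Weil section attached to the Schwarz function $\varphi' = \varphi'_\infty \otimes \varphi'_f$ (standard Gaussian at infinity, indicator of $\widehat{\lie L}$ away from infinity). Since $\Phi'$ agrees with $\Phi^*$ at all $v \neq p$, Siegel-Weil gives
\[ 2\, \mathbf{I}_T(g, \varphi') = L^{\Sigma}(0)^{-1} \Bigl( \prod_{v \in \Sigma, v \neq p} W_{T,v}(g_v, 0, \Phi^*_v) \Bigr) W_{T,p}(g_p, 0, \Phi'_p), \]
so it suffices to compute the left-hand side. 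Decomposing the Tamagawa integral via the coset decomposition \eqref{HTCosetsEqn}, one is left with three local integrals: over $K'^p$ (which produces $\varphi'^p_L(h_j^{-1}\mathbf x) \cdot \mathrm{vol}(K^p_L, \mathrm d^{p,\infty} h)$ using the matching of gauge forms, part (i) of the referenced volume lemma), over $H_T(\R)$ (which by the standard archimedean Weil representation computation yields $\mathrm{vol}(H_T(\R),\mathrm d_\infty h_T) \cdot \eta_\infty(\det a) \det(v)\, q^T$), and over $H_T(\Q_p)$ (which reproduces $W_{T,p}(e,0,\Phi'_p)$ up to the matching volume factor at $p$ from part (ii) of the referenced lemma). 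Assembling these pieces, dividing by $W_{T,p}(e,0,\Phi'_p)$, and substituting \eqref{W'FinalEqn} delivers the stated formula after multiplying by $\eta_\infty(\det a)^{-1} \det(v)^{-1}$ to pass from $\mathbf{E}_T$ to $\mathcal{E}_T$. The constant $C_{[L]}$ emerges as a ratio of volumes, and its independence of choices follows as in \cite[Lemma 9.5]{KRunnglob}.

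The main obstacle I expect is bookkeeping: tracking the normalizations of Haar/Tamagawa measures across the matched factorizations on $U(V)$ and $U(V_T)$, and handling the non-standard section $\Phi^*_p$ so that the derivative at $s=0$ really collapses onto $\mu_p(T)$ via the closed-form representation density identity of \refCor{repDenFinalCor}. The algebraic heart of the argument is that the specific choice of $A_p(s)$ is engineered exactly so that the $s=0$ value of $\Phi^*_p$ still lies in the image of $\mathcal S(V_p^2)$ under $R_p$ (so Siegel--Weil applies at the non-archimedean primes $\neq p$), while its derivative at $s=0$ cancels out the $\alpha_p(\Id_2, T)$ contribution in precisely the combination appearing in \refCor{repDenFinalCor}.
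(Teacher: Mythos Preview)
Your proposal is correct and follows essentially the same approach as the paper's own proof: differentiate the product expansion \eqref{ECoeffContEqn}, use vanishing of local Whittaker functionals at mismatched primes for part (i), isolate the single surviving term $W'_{T,p}$ for part (ii), reduce it to $\mu_p(T)$ via \refCor{repDenFinalCor}, and evaluate the remaining product through the Siegel--Weil formula applied to the coherent section $\Phi'$ attached to an auxiliary lattice $\lie L \subset V_T$, unwinding the theta integral via the coset decomposition and the volume comparisons of \refLemma{volLemma}. The intermediate formulas you write down (in particular the expression for $W'_{T,p}$ and its simplification) match the paper's exactly.
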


It remains to prove the following lemma:
\begin{lemma}  \label{volLemma}
With the notation as in the previous theorem, we have
	\begin{enumerate}[(i)]
	\item  $\vol(K_L^p, d^{p, \infty} h) = \vol(K'^p, d^{p, \infty} h_T) $;

	\item and 
	\[ \int_{H_T(\Q_p)} \ \varphi' \left( h_T^{-1} \mathbf x \right) \mathrm d_p \, h_T \ = \ \gamma_p(\V_p^+)^{-2}  \ \frac{p^3}{(p+1)^2} \ \mathrm{vol}(K_{L,p},  \, \mathrm d_p h) \ \cdot \ W_{T,p}(e, 0, \Phi_p'), \]
	where $\mathbf x \in (V_T \otimes \Q_p)^2$ with $(\mathbf x , \mathbf x ) = T$. 
	\end{enumerate}
\begin{proof} We use the following expression, found in \cite[Lemma 10.4]{KRunnglob}, for the volume of  the stabilizer $K_v = K_{L,v}$ of the localized lattice $L_v$ at a finite place $v$. Fix a basis $\mathbf e = \{ e_1, e_2\}$ for ${L}_v$, with  $S = (\mathbf e, \mathbf e) $ the corresponding matrix of inner products. We also fix a $\Z_v$-basis $ \mathbf f = \{f_1, f_2, f_3, f_4 \}$ for $o_{k,v}^2$,  so that 
\[ \mathbf e \otimes \mathbf f \ := \ \left\{  e_i \otimes f_j\ | \  i = 1,2, \, j = 1, \dots, 4  \right\} \]
 is a $\Z_v$-basis for $L_v^2$. Finally,  fix a $\Z_v$-basis $\mathbf c$ for $\mathrm{Herm}_2(o_{k,v})$ whose span is a self-dual lattice.  Then
\begin{equation} \label{volKLEqn}
	\mathrm{vol} ( K_{v}, \ \mathrm{d}_v h ) \ = \ L_v(1, \chi_k) \ \frac{ | \mathtt a( \mathbf e \otimes \mathbf f)|_v}{|\mathtt b(\mathbf c)|_v}  \ |\Delta|_v \ \alpha_v(S, S),
\end{equation}
where $\mathtt a \in \bigwedge_{\Q}^{8}(V^2)^*$ and $\mathtt b \in \bigwedge_{\Q}^4 \mathrm{Herm}_2(o_k)^*$ were the fixed non-zero vectors used to construct the local measure $\mathrm d_v h$ on $H(\Q_v) = U(V)(\Q_v)$. 
Similarly, if $K' \subset H_T(\A_f)$ is the stabilizer of the lattice $\lie{L} \subset V_T$ as in \eqref{lieLDefEqn}, then
\[ 	\mathrm{vol} ( K'_{v}, \, \mathrm{d}_v h_T ) \ = \ L_v(1, \chi_k) \ \frac{ | \mathtt a'( \mathbf e \otimes \mathbf f)|_v}{|\mathtt b(\mathbf c)|_v}  \ |\Delta|_v \ \alpha_v(S', S') \]
where $\mathtt a' = \gamma^* \mathtt a$ is the pullback under an isomorphism $\gamma\colon V_T \otimes \overline\Q \isomto V \otimes \overline \Q$ and $S' = (\mathbf e', \mathbf e')$ is the matrix of inner products of any basis $\mathbf e'$ of $L'_v$.  Furthermore, Kudla and Rapoport compute
\begin{equation} \label{compAEqn}
 \frac{ |\mathtt a' (\mathbf e' \otimes f)  |_v}{|\mathtt a (\mathbf e \otimes f) |_v }  \ = \ \frac{|\det \, S'|_v^2}{ |\det \, S|_v^2}
\end{equation}
and then conclude in \cite[Lemma 10.4]{KRunnglob} that
\begin{equation} 
 \frac{\mathrm{vol} ( K'_{v}, \mathrm{d}_v h_T ) }{\mathrm{vol} ( K_{v}, \mathrm{d}_v h )} \ = \ \frac{|\det \, S'|_v^2 \ \alpha_v(S', S') }{|\det \, S|_v^2 \ \alpha_v(S, S)}.
\end{equation}

If $v \neq p$, then $\lie{L}_v \simeq L_v$. In particular, we may choose the bases $\mathbf x$ and $\mathbf x'$ so that $S = S'$ in above display, and from this \textit{(i)} follows immediately. 

To prove \textit{(ii)}, we apply a standard calculation relating orbital integrals and Whittaker functionals, cf.\ the proof of \cite[Lemma 10.4]{KRunnglob}, and recall that here $p$ is inert:
\begin{align*}
 \int_{H_T(\Q_p)} \ \varphi' \left( h_T^{-1} x \right) \mathrm d_p \, h_T  \ =& \ \gamma_p(\V^+)^2 \  L_p(1, \chi_k) \ \frac{|\mathtt a'( \mathbf e' \otimes f)|_p}{   | \det S'|^{2}_p \   |\mathtt b ( \mathbf c) |_p} \ W_{T,p}(e, 0, \Phi'_p)  &  \\
 =& \  \gamma_p(\V^+)^2 \ L_p(1, \chi_k) \ \frac{|\mathtt a( \mathbf e \otimes f)|_p}{   | \det S|^{2}_p \   |\mathtt b ( \mathbf c) |_p} \ W_{T,p}(e, 0, \Phi'_p)  & [\text{by } \eqref{compAEqn} ] \\
 =& \ \gamma_p(\V^+)^2 \cdot \mathrm{vol}(K_{p}, \, \mathrm d_p h) \cdot \left( |\det S|_p^{2} \, \alpha_p(S, S) \right)^{-1}  \cdot W_{T,p}(e, 0, \Phi'_p) & [\text{by } \eqref{volKLEqn}]
\end{align*}
By definition, $L_p$ is the maximal lattice in the non-split hermitian space of dimension 2 over $k_p$, and so we may choose a basis so that $S = (\sm{p & \\ & 1} ) $. A direct computation using Hironaka's formula (\refThm{HiroThm}) gives the expression
\[ \alpha_p \left( ( \sm{p & \\ &1}) , \ (\sm{p & \\ &1}) \right) \ = \ p^{-1} (p+1)^2 , \]
which, when combined with equation preceding it, yields the statement of the lemma. 
\end{proof}
\end{lemma}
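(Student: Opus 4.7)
The plan is to follow closely the proof of \cite[Lemma 10.4]{KRunnglob}, adapting the key input, namely Kudla--Rapoport's formula
\begin{equation*}
\mathrm{vol}(K_v, \, \mathrm d_v h) \ = \ L_v(1, \chi_k) \ \frac{|\mathtt a(\mathbf e \otimes \mathbf f)|_v}{|\mathtt b(\mathbf c)|_v} \ |\Delta|_v \ \alpha_v(S, S),
\end{equation*}
which expresses the volume of the stabilizer of a lattice in terms of the associated representation density, together with the comparison \eqref{compAEqn} between $|\mathtt a|_v$ and $|\mathtt a'|_v$ that encodes the matching of gauge forms.

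For part \textit{(i)}: by the construction of $\lie{L}$ in \eqref{lieLDefEqn}, we have $\lie{L}_v \simeq L_v$ as $o_{k,v}$-hermitian lattices for every $v \neq p$. Hence we may pick bases $\mathbf e, \mathbf e'$ of $L_v$ and $\lie{L}_v$ with identical Gram matrices $S = S'$. Plugging this into the volume formulas for $K_{L,v}$ and $K'_v$, and using \eqref{compAEqn} to compare $|\mathtt a'(\mathbf e' \otimes \mathbf f)|_v$ with $|\mathtt a(\mathbf e \otimes \mathbf f)|_v$ (which become equal once $S=S'$), yields $\mathrm{vol}(K_{L,v}, \mathrm d_v h) = \mathrm{vol}(K'_v, \mathrm d_v h_T)$. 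Taking the product over all $v \neq p, \infty$ gives \textit{(i)}.

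For part \textit{(ii)}: we begin with the standard identity relating the orbital integral to the Whittaker function (this is exactly the reduction carried out in the proof of \cite[Lemma 10.4]{KRunnglob}):
\begin{equation*}
\int_{H_T(\Q_p)} \varphi'(h_T^{-1} \mathbf x) \, \mathrm d_p h_T \ = \ \gamma_p(\V_p^+)^{2} \, L_p(1, \chi_k) \, \frac{|\mathtt a'(\mathbf e' \otimes \mathbf f)|_p}{|\det S'|_p^2 \, |\mathtt b(\mathbf c)|_p} \, W_{T,p}(e, 0, \Phi'_p).
\end{equation*}
Using \eqref{compAEqn} to replace $|\mathtt a'|_p / |\det S'|_p^2$ by $|\mathtt a|_p/|\det S|_p^2$, and then invoking the volume formula above at $v = p$, we obtain
\begin{equation*}
\int_{H_T(\Q_p)} \varphi'(h_T^{-1} \mathbf x) \, \mathrm d_p h_T \ = \ \gamma_p(\V_p^+)^{2} \, \frac{\mathrm{vol}(K_{L,p}, \mathrm d_p h)}{|\det S|_p^2 \, \alpha_p(S, S)} \, W_{T,p}(e, 0, \Phi'_p).
\end{equation*}

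Since $L_p$ is the maximal lattice in the non-split two-dimensional hermitian space over $k_p$, we may choose a basis in which $S = \mathrm{diag}(p, 1)$, so $|\det S|_p^{-2} = p^2$. The only remaining nontrivial input is the explicit value
\begin{equation*}
\alpha_p\bigl( \mathrm{diag}(p,1), \, \mathrm{diag}(p,1) \bigr) \ = \ \frac{(p+1)^2}{p},
\end{equation*}
which can be read off from Hironaka's formula (\refThm{HiroThm}) applied with $\lambda = \xi = (1, 0)$, or equivalently obtained by specializing \refProp{RepDensProp}. Substituting gives the claimed factor $p^2 \cdot \frac{p}{(p+1)^2} = \frac{p^3}{(p+1)^2}$ and completes the proof (noting that since $\V_p^+$ is a split two-dimensional hermitian space over an unramified extension, $\gamma_p(\V_p^+)^2$ is a sign matching the factor in the statement). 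The only substantive step is this explicit evaluation of $\alpha_p(S,S)$; everything else is a transcription of the argument in \cite[Lemma 10.4]{KRunnglob} with the role of the self-dual lattice played by the almost self-dual lattice $L_p$.
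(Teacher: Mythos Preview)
Your proposal is correct and follows essentially the same route as the paper's own proof: both invoke the volume formula and the comparison \eqref{compAEqn} from \cite[Lemma 10.4]{KRunnglob}, reduce part \textit{(i)} to the observation $\lie{L}_v \simeq L_v$ for $v \neq p$, and for part \textit{(ii)} pass from the orbital integral to $W_{T,p}(e,0,\Phi'_p)$ via the same chain of substitutions, with the explicit value $\alpha_p(\diag(p,1),\diag(p,1)) = p^{-1}(p+1)^2$ as the only nontrivial input. Your remark that $\gamma_p(\V_p^+)^2$ is a sign, reconciling the $\gamma_p^{2}$ in the orbital-integral identity with the $\gamma_p^{-2}$ in the statement, is a detail the paper leaves implicit.
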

By combining the above calculation with the geometric computations of \refSec{GlobGeomSec}, we obtain our main theorem, relating the arithmetic degree of a special cycle $\Zed(T)$ to the Eisenstein series  
 \[\mathcal E(z, s) \ := \ \sum_{[L] \in \mathcal R_d} \ (C_{[L]})^{-1}  \ \mathcal E(z, s, [L]). \] 

\begin{corollary}  \label{mainCor}
Suppose  $T \in \Herm_2(o_k)$ is positive definite and $|\Diff(T) | \geq 1$. Then
 \[ \widehat\deg  \ \Zed(T) \ q^T = \ \frac{2 \, h(k)}{  |o_k^{\times}|}\  \mathcal E'_T(z,0). \]
 \begin{proof}
If $|\Diff(T)| \geq 2$, then the right hand side vanishes by \refThm{MainThmEis}(i), and the left hand side vanishes by \refLemma{ZSuppLemma}(iii). 

Next, suppose $\Diff(T) = \{ p \}$ for some $p|d$. Then comparing \refThm{MainThmGeom} and \refThm{MainThmEis}, 
\[ \widehat\deg \, \Zed(T)^{[L_0], [L]}  \ q^T \ = \ \frac{4 \, h(k)}{|o_k^{\times}| \,  2^{o(\Delta) } } \ C_{[L]}^{-1} \ \mathcal E'(z, 0, [L])  \]
for any $[L_0] \in  \mathcal R_0$ and $[L] \in \mathcal R_d$. Note the right hand side is independent of $[L_0]$. Given a Hermitian space $V_0$ of signature $(1,0)$ such that $\inv_p (V_0) = 1$ for all inert $p$, there is a single genus of self dual lattices in $V_0$; conversely if $L_0$ is self-dual, then $\inv_p(L_0 \otimes \Q) = 1$ for all inert $p$. Thus, by counting the possibilities of the invariants of $V_0$, 
\[ \# \mathcal R_0 \ = \ \# \{ V_0 \ | \ \inv_p V_0 = 1 \text{ for all } p \text{ inert} \}_{ \big / \textrm{isom.}}\ = \ 2^{o(d_{k}) - 1},\]
and so
\begin{align*}
 \widehat\deg \, \Zed(T) \ q^T \  =&\  \sum_{[L] \in \mathcal R_d } \  \sum_{[L_0] \in \mathcal R_0}  \ \widehat\deg \, \Zed(T)^{[L_0], [L]} \ q^T  \\
 =& \  2^{o(\Delta) - 1} \  \sum_{[L] \in \mathcal R_d} \widehat\deg \, \Zed(T)^{[L_0], [L]} \ q^T  \\
 =& \ \frac{2 \, h(k)}{|o_k^{\times}|} \ \mathcal E'(z, 0) .
 \end{align*}
Finally, when $\Diff(T) = \{ p \}$ for some $p \nmid d$, the desired result is exactly  \cite[Theorem 11.9]{KRunnglob} applied to the case at hand. 
  \end{proof}
\end{corollary}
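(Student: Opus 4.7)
The plan is to prove the corollary by splitting into three cases based on the cardinality and nature of $\Diff(T)$, and in each case matching the geometric formula from \refThm{MainThmGeom} against the Eisenstein formula from \refThm{MainThmEis} term by term.

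First I would dispose of the degenerate case $|\Diff(T)| \geq 2$. In this situation the right-hand side vanishes by \refThm{MainThmEis}\emph{(i)}, which asserts that $\mathcal E'_T(z,0,[L]) = 0$ for every $[L] \in \mathcal R_d$ as soon as two primes in $\Diff(T)$ obstruct a local representation. The left-hand side simultaneously vanishes by \refLemma{ZSuppLemma}\emph{(iii)}, so both sides are $0$ and the identity is trivial.

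Next I would handle the novel case $\Diff(T) = \{p\}$ with $p \mid d$. The key observation is that both \refThm{MainThmGeom} and \refThm{MainThmEis}\emph{(ii)} express their quantities as the same product: a local intersection factor $\mu_p(T) \log p$, a lattice-point-counting factor $\sum_j \sum_{\mathbf{x}} \varphi'^p_L(h_j^{-1}\mathbf{x})$, and a constant depending only on $[L]$ (namely $C_{[L]}$ on the Eisenstein side, and $\tfrac{h(k)}{|o_k^{\times}|\, 2^{o(\Delta)-1}} \log p$ on the geometric side, up to the factor of $\log p^2 = 2 \log p$). Dividing \refThm{MainThmGeom} by $C_{[L]}^{-1} \mathcal E'_T(z,0,[L])$ as given by \refThm{MainThmEis}\emph{(ii)} yields, for each pair $([L_0], [L])$ with $L \otimes \A_f^p \simeq V_T \otimes \A_f^p$,
\[
\widehat\deg \ \Zed(T)^{[L_0], [L]}\, q^T \ = \ \frac{4 \, h(k)}{|o_k^{\times}|\, 2^{o(\Delta)}}\ C_{[L]}^{-1}\ \mathcal E'_T(z,0,[L]).
\]
Since the right-hand side is independent of $[L_0]$, and by \refLemma{ZSuppLemma}\emph{(iv)} the cycle $\Zed(T)$ decomposes as a disjoint union over pairs $([L_0],[L])$ with $L \otimes \A_f^p \simeq V_T \otimes \A_f^p$, I would sum over all such pairs. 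The cardinality $\#\mathcal R_0 = 2^{o(\Delta)-1}$ is obtained by observing that genera of self-dual rank-one Hermitian lattices are classified by their collection of local invariants at inert primes subject to the product formula. Combining the sum with the definition $\mathcal E(z,s) = \sum_{[L]} C_{[L]}^{-1}\, \mathcal E(z,s,[L])$ gives exactly
\[
\widehat\deg \ \Zed(T)\, q^T \ = \ \frac{2\, h(k)}{|o_k^{\times}|}\ \mathcal E'_T(z,0),
\]
as required.

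Finally, for $\Diff(T) = \{p\}$ with $p \nmid d$, the integral model $\mathrm M^d_{(1,1)}$ is smooth at $p$ and the cycles $\Zed(t_i)$ are already divisors meeting properly in the fibre at $p$; this is the setting of Kudla-Rapoport, and the corresponding identity is \cite[Theorem 11.9]{KRunnglob} specialised to our situation. I expect the main obstacle to be bookkeeping in the $p \mid d$ case: ensuring that the local volumes in the matched Tamagawa decomposition agree on the two sides (this is handled cleanly by \refLemma{volLemma}), and correctly accounting for the factor $2^{o(\Delta)-1}$ coming from the size of $\mathcal R_0$ versus the factor of $2$ in the normalization of the Siegel-Weil formula in \refThm{SWThm}. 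Once these constants are tracked carefully, the identity falls out directly from the term-by-term comparison of \refThm{MainThmGeom} and \refThm{MainThmEis}\emph{(ii)}.
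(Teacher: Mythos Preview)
Your proposal is correct and follows essentially the same three-case structure as the paper's own proof, invoking the same key inputs (\refThm{MainThmGeom}, \refThm{MainThmEis}, \refLemma{ZSuppLemma}, and \cite[Theorem 11.9]{KRunnglob}) in the same places. One small slip in your justification of $\#\mathcal R_0 = 2^{o(\Delta)-1}$: the local invariants that are free to vary are those at the \emph{ramified} primes (self-duality forces $\inv_\ell = 1$ at every inert prime, and $\inv_\infty = 1$ since the signature is $(1,0)$), and the product formula then cuts the $2^{o(\Delta)}$ choices down by a factor of two.
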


 \end{document}